\newtheorem{formula}{}[section]
\newtheorem{proposition}[formula]{Proposition}
\newtheorem{corollary}[formula]{Corollary}
\newtheorem{theorem}[formula]{Theorem}
\theoremstyle{definition}
\newtheorem{definition}[formula]{Definition}
\newtheorem{example}[formula]{Example}
\theoremstyle{remark}
\newtheorem*{remark}{Remark}
\DeclareMathOperator{\cc}{cc}
\newcommand{\zp}{\mathcal Z_P}
\newcommand{\zk}{\mathcal Z_K}
\newcommand{\mb}[1]{{\textbf {\textit#1}}}
\begin{document}

\title{Higher order Massey products and applications}
\author[I.Yu.~Limonchenko]{Ivan~Limonchenko}
\address{Faculty of Computer Science, National Research University Higher School of Economics, Moscow, Russia}
\email{ilimonchenko@hse.ru}
\address{The Fields Institute for Research in Mathematical Sciences, Department of Mathematics, University of Toronto, Toronto, Canada}
\email{ilimonch@math.toronto.edu}
\author[D.V.~Millionshchikov]{Dmitry~Millionshchikov}

\address{Department of Mechanics and Mathematics, Moscow State University, 119992 Moscow, Russia}
\email{mitia\_m@hotmail.com}
\address{Steklov Mathematical Institute of Russian Academy of Sciences,  8 Gubkina St., Moscow 119991, Russia}
\address{Gubkin Russian State University of Oil and Gas (National Research 
University), 65 Leninsky Prospekt, 119991 Moscow,  Russia}

\def\sgn{\mathrm{sgn}\,}
\def\bideg{\mathrm{bideg}\,}
\def\tdeg{\mathrm{tdeg}\,}
\def\sdeg{\mathrm{sdeg}\,}
\def\grad{\mathrm{grad}\,}
\def\ch{\mathrm{ch}\,}
\def\sh{\mathrm{sh}\,}
\def\th{\mathrm{th}\,}

\def\mod{\mathrm{mod}\,}
\def\In{\mathrm{In}\,}
\def\Im{\mathrm{Im}\,}
\def\Ker{\mathrm{Ker}\,}
\def\Hom{\mathrm{Hom}\,}
\def\Tor{\mathrm{Tor}\,}
\def\rk{\mathrm{rk}\,}
\def\codim{\mathrm{codim}\,}

\def\ko{{\mathbf k}}
\def\sk{\mathrm{sk}\,}
\def\RC{\mathrm{RC}\,}
\def\gr{\mathrm{gr}\,}

\def\R{{\mathbb R}}
\def\C{{\mathbb C}}
\def\Z{{\mathbb Z}}
\def\A{{\mathcal A}}
\def\B{{\mathcal B}}
\def\K{{\mathcal K}}
\def\M{{\mathcal M}}
\def\N{{\mathcal N}}
\def\E{{\mathcal E}}
\def\G{{\mathcal G}}
\def\D{{\mathcal D}}
\def\F{{\mathcal F}}
\def\L{{\mathcal L}}
\def\V{{\mathcal V}}
\def\H{{\mathcal H}}

\address{Department of Mechanics and Mathematics, \\
Moscow State University,\\
Leninskie Gory, 119992 Moscow, Russia \vspace{1mm}\\
E-mail: mitia\_m@hotmail.com }

\thanks{The research of the first author was carried out within the University Basic Research Program of the Higher School of Economics and was funded by the Russian Academic Excellence Project `5-100'. The second author was supported by the RSF grant 20-11-19998. The work of D.V.~Millionshchikov (the second author) is supported by the Russian Science Foundation under grant 20-11-19998 and performed in Steklov Mathematical Institute of Russian Academy of Sciences.}

\subjclass[2010]{Primary 13F55, Secondary 52B11, 55S30}

\keywords{Massey product, Lie algebra cohomology, deformation, polyhedral product, moment-angle manifold, nestohedron, graph-associahedron.}

\maketitle

\begin{abstract}
{In this survey, we discuss two research areas related to Massey's higher operations. The first direction is connected with the cohomology of Lie algebras and the theory of representations. The second main theme is at the intersection of toric topology, homotopy theory of polyhedral products, and the homology theory of local rings, Stanley--Reisner rings of simplicial complexes.}
\end{abstract}

\setcounter{section}{0}

\section*{Introduction}
Higher order Massey operations proved to be a very effective algebraic tool for describing various obstructions to the existence or continuation of the diverse topological, geometric or algebraic structures and their deformations.
In this short review, we attempted to systematize a series of results obtained over the past decade concerned with two important topics related to applications of the Massey higher operations. The starting point for the first topic is the Duady's work \cite{Douady} of the 1960, where the relation of Massey products to the theory of deformations was observed. This topic has been actively developed in the following decades, we especially note the works by Palamodov \cite{Palam}, Retakh \cite{Retakh1, Retakh2}, in which the connection between the higher Massey products and the Kodaira-Spencer theory of deformations was studied. 

In algebraic topology it is well known that Massey products serve as an obstruction to formality of a topological space (see Section 5).  
In 1975, Deligne, Griffiths, Morgan and Sullivan proved that simply connected compact K\"ahler manifolds are formal \cite{DGMS}. In particular it means that the existence of non-trivial Massey products in the cohomolgy $H^*(M,{\mathbb R})$ is an obstruction for a manifold $M$ to be K\"ahler \cite{DGMS}. 
On the other hand, Halperin and Stasheff~\cite{HS} constructed a non-formal differential graded algebra such that all Massey products vanish in its cohomology.

A blow-up of a symplectic manifold $M$ along its submanifold $N$
inherits non-trivial Massey products \cite{BaTa0, BaTa1}.
Babenko and Taimanov  applied the symplectic blow-up procedure for the construction of simply
connected non-formal symplectic manifolds in dimensions $\ge 10$ \cite{BaTa0, BaTa1}. 
However, in the papers of Babenko and Taimanov \cite{BaTa1, BaTa2} we were attracted primarily by their approach to the definition of Massey products in the language of formal connections and the Maurer-Cartan equation. The idea of such an approach has been encountered in literature before, first  May's \cite{JPM, JPM2}, then in Palamodov's article \cite{Palam}, but it was precisely in the articles of Babenko and Taimanov that this approach was comprehensively developed \cite{BaTa1, BaTa2}. 

In the Section \ref{Massey_S} we
recall the elements of the Babenko-Taimanov approach to the definition of Massey products \cite{BaTa1, BaTa2}.  The analogy with the classical
Maurer-Cartan equation, which is especially transparent in the case of
Massey products of $1$-dimensional cohomology classes $\langle
\omega_1,\dots,\omega_n\rangle$, is discussed in Section \ref{g-modules}. The relation of this special case to representation theory was discovered in \cite{Dw, FeFuRe}. 

The related material, which is presented in the form of a publication for the first time, is contained in Section \ref{k-step}. We are talking about variations of Massey products, which we called $k$-step Massey products. It is well known that the classical Massey products are multi-valued and partially defined operations. We propose to consider successive obstructions $\langle \omega_1, \dots, \omega_n \rangle_k, k=1, \dots, n{-}1,$ arising in constructing a formal connection (defining system) $A$ for the classical Massey product $\langle \omega_1,\dots, \omega_n \rangle$ as $k$-step Massey products (see Definition \ref{def-k-step}).

The main feature uniting the results that we relate to the first topic is the Massey products in the cohomology of Lie algebras. Particular attention in this part is given to non-trivial Massey products, it was motivated by applications. Two very important and interesting positively graded Lie algebras were chosen as the main examples of this article. Firstly, this is the positive part $W^+$ of the Witt algebra, and secondly, its associated graded algebra   with respect to the filtration by the ideals of the lower central series ${\mathfrak m}_0$. Sometimes ${\mathfrak m}_0$ is called 
the infinite dimensional filiform Lie algebra. We discuss in Section \ref{nontrivSection} the proof of Buchstaber's conjecture that the cohomology $H^*(W^+)$ are generated by means of non-trivial Massey products by the one-cohomology $H^1(W^+)$. The corresponding Theorem \ref{main_Buch_conj} was proved by the second author in \cite{Mill2}. We consider in Section \ref{nontrivSection} also
the structure results on the Massey products in the cohomology  $H^*({\mathfrak m}_0)$.

There is a natural connection between the cohomology of infinite-dimensional positively graded (filtered) Lie algebras and the topology of manifolds. The cohomology of finite-dimensional quotients of such algebras with rational structural constants is isomorphic, according to the Nomizu theorem \cite{Nomz}, to the real cohomology of nilmanifolds $G/\Gamma$ that correspond to such quotients. It was such a tower of nilmanifolds $M_n$, that correspond  to $W^+$ (the Witt algebra) and first introduced by Buchstaber in \cite{BuSho} and later used by Babenko and Taimanov in their construction of symplectic nilmanifolds \cite{BaTa0, BaTa1} with simply defined non-trivial triple Massey products. Namely, the methods of the Lie algebras theory  make it possible to efficiently calculate the first cohomology of nilmanifolds and discover nontrivial Massey products.

A well-known conjecture, which dates back to the papers by May~\cite{JPM} and May-Gugenheim~\cite{JPMG}, asserts that higher differentials in the Eilenberg--Moore spectral sequence of a space are determined by (matric) Massey products of the space. The second central theme of our review is closely related to that conjecture and originates in the early 1960s, from a pioneering paper of Golod~\cite{Go}. He showed that Poincar\'e series of a local ring $A$ achieve its (coefficientwise) upper bound, previously identified by Serre, precisely in the case when multiplication and all Massey products, triple and higher, vanish in Koszul homology of $A$. 

This result acquired a topological interpretation in toric topology by means of the theorem due to Buchstaber and Panov~\cite{bu-pa00-2,TT}, who proved that cohomology algebra of a moment-angle-complex $\zk$ over a commutative ring with unit $\ko$ is isomorphic to Koszul homology of the corresponding Stanley-Reisner ring $\ko[K]$. It gives us a tool to identify a class of simplicial complexes with formal moment-angle complexes and it also enables one to construct non-formal (and therefore, non-K\"ahler) moment-angle manifolds, having non-trivial Massey products in their cohomology. 

Another classical well-known problem related to this part of our survey is the Steenrod problem of realization of (rational) cycles in a given space by oriented manifolds. Halperin and Stasheff~\cite{HS} showed that cohomology ring of a (rationally) formal space is generated by spherical classes. Thanks to toric topology, an example of a non-trivial triple Massey product of 3-dimensional spherical classes in cohomology of a 2-connected space (a moment-angle-complex) was contructed by Baskakov~\cite{BaskM}. In~\cite{L2016,L2017,L2019} the first author generalized Baskakov's construction and proved existence of moment-angle manifolds $\zp$ having non-trivial higher Massey products (of any prescribed order) of spherical cohomology classes in $H^*(\zp)$.

In Section 5 we give a survey on the construction of Massey products in the algebraic context of Koszul homology of local rings and discuss the Golod property for Stanley--Reisner rings of simplicial complexes. In Section 6 we deal with the results on non-trivial triple and higher Massey products in cohomology of moment-angle-complexes and moment-angle manifolds, emphasizing the case of strictly defined (i.e., containing a single element) non-trivial higher Massey products.


\section{Massey products in cohomology}
\label{Massey_S}
Let $\A=\oplus_{l \ge 0}\A^l$
be a differential graded algebra over a field ${\mathbb K}$.
It means that the following operations are defined:
an associative multiplication
$$
\wedge:\A^l\times\A^m\to\A^{l+m},\; l,m\geq 0, \; l,n \in {\mathbb Z}.
$$
such that $a\wedge b=(-1)^{lm}\,b\wedge a$ for $a\in\A^l$, $b\in\A^m$,
and a differential $d, \; d^2=0$
$$
d:\A^l\to\A^{l+1},\ \ l\geq 0,
$$
satisfying the Leibniz rule
$d\,(a\wedge b)=d\,a\wedge b+(-1)^l a\wedge d\,b$ for $a\in\A^l$.

Of course, the most natural example of differential graded algebra $\A$ is the de Rham complex $\A=\Lambda^*(M), {\mathbb K}={\mathbb R},$ of smooth forms of a smooth manifold $M$. However, in this article we will pay special attention to the following two examples.

\begin{example}
$\A=\Lambda^*(\mathfrak{g})$ is the cochain complex of a Lie algebra.
\end{example}

\begin{example}
A Koszul complex $K_{A}=\Lambda\,A^{m}$ of a commutative Noetherian local ring $(A,\mathbf m,\ko)$ (see the section \ref{local_rings} for details).
\end{example}

For a given differential graded algebra  $(\A,d)$ we denote
by $T_n(\A)$ a space of all upper triangular
$(n+1)\times (n+1)$-matrices with entries from $\A$,
vanishing at the main diagonal. The standard matrix multiplication turns the vector space $T_n(\A)$ into an algebra, we assume that
matrix entries are multiplying as elements of $\A$.
One can define the differential $d$ on $T_n(\A)$  by
\begin{equation}
d\,A=(d\,a_{ij})_{1\leq i,j\leq n{+}1}.
\label{1.2.1}
\end{equation}

We extend the involution $a\to\bar{a}=(-1)^{k+1}a, a \in A^k$
of $\A$  to the involution of $T_n(\A)$ by the rule
$$
\bar{A}=(\bar{a}_{ij})_{1\leq i,j\leq n{+}1}.
$$
It satisfies the following properties
$$
\overline{\bar{A}}=A, \quad
\overline{AB}=-\bar{A}\bar{B}, \quad \overline{d\,A}=-d\,\bar{A}.
$$
Also we have the generalized Leibniz rule for the differential
(\ref{1.2.1})
$$
d\,(AB)=(d\,A)B-\bar{A}(d\,B).
$$

Consider a
two-sided ideal $I_n(\A)$ of matrices of the following form
$$
\left(\begin{array}{cccc}
0 & \dots & 0 & \tau \\
0 & \dots & 0 & 0 \\
& \dots& & \\
0 & \dots & 0 & 0
\end{array}\right), \quad \tau \in \A
$$
Obviously, the ideal $I_n(\A)$ belongs to the center $Z(T_n(\A))$ of the algebra $T_n(\A)$.

\begin{definition}[\cite{BaTa2}]
A matrix $A \in T_n(\A)$ is called the matrix of
a formal connection if it satisfies the Maurer-Cartan equation
\begin{equation}
\mu(A)=d\,A-\bar{A}\cdot A \in I_n(\A).
\label{star}
\end{equation}
\end{definition}
\begin{proposition}[\cite{BaTa2}]
The generalized Bianchi identity
for the Maurer-Cartan operator $\mu(A)=d\,A-\bar{A}\cdot A$ holds
\begin{equation}
\label{MCar}
d\,\mu(A)=\overline{\mu(A)}\cdot A+A\cdot\mu(A).
\end{equation}
\end{proposition}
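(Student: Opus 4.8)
The plan is to verify the identity \eqref{MCar} by a direct computation using only the generalized Leibniz rule $d(AB)=(dA)B-\bar A(dB)$, the involution rules $\overline{\overline A}=A$, $\overline{AB}=-\bar A\bar B$, $\overline{dA}=-d\bar A$, and associativity of matrix multiplication in $T_n(\A)$. First I would expand the left-hand side: since $\mu(A)=dA-\bar A\cdot A$ and $d^2=0$, we get $d\,\mu(A)=d(dA)-d(\bar A\cdot A)=-d(\bar A\cdot A)$. Applying the Leibniz rule to the product $\bar A\cdot A$ yields $d(\bar A\cdot A)=(d\bar A)\cdot A-\overline{\bar A}\cdot(dA)=(d\bar A)\cdot A-A\cdot dA$, so that $d\,\mu(A)=-(d\bar A)\cdot A+A\cdot dA$. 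Using $\overline{dA}=-d\bar A$, this becomes $d\,\mu(A)=\overline{dA}\cdot A+A\cdot dA$.

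Next I would expand the right-hand side. We have $\overline{\mu(A)}=\overline{dA-\bar A\cdot A}=\overline{dA}-\overline{\bar A\cdot A}=\overline{dA}-(-\overline{\bar A}\cdot\bar A)=\overline{dA}+A\cdot\bar A$, hence $\overline{\mu(A)}\cdot A=\overline{dA}\cdot A+A\cdot\bar A\cdot A$. Similarly $A\cdot\mu(A)=A\cdot dA-A\cdot\bar A\cdot A$. Adding these two gives $\overline{\mu(A)}\cdot A+A\cdot\mu(A)=\overline{dA}\cdot A+A\cdot\bar A\cdot A+A\cdot dA-A\cdot\bar A\cdot A=\overline{dA}\cdot A+A\cdot dA$, which is exactly the expression obtained for $d\,\mu(A)$ in the previous step. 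This proves \eqref{MCar}.

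The computation is essentially a sequence of substitutions, and the only point requiring genuine care is the bookkeeping of signs when the involution interacts with matrix multiplication — in particular the rule $\overline{AB}=-\bar A\bar B$ (note the minus sign) and the cancellation of the two $A\cdot\bar A\cdot A$ terms of opposite sign on the right-hand side. A secondary subtlety is to confirm that the Leibniz rule stated for homogeneous $a\in\A^l$ extends verbatim to the matrix setting in the sign-free form $d(AB)=(dA)B-\bar A(dB)$; this is precisely the generalized Leibniz rule recorded in the excerpt, so no extra work is needed. Thus the main obstacle is not conceptual but purely a matter of tracking signs correctly through four or five lines of algebra, and once the two sides are each reduced to $\overline{dA}\cdot A+A\cdot dA$ the identity follows immediately.
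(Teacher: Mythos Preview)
Your proof is correct and follows essentially the same route as the paper: both arguments reduce $d\,\mu(A)$ to $\overline{dA}\cdot A+A\cdot dA$ via $d^2=0$, the Leibniz rule, and $\overline{dA}=-d\bar A$, and then match this with the right-hand side through the same cancellation of the $\pm A\cdot\bar A\cdot A$ terms. The only cosmetic difference is that the paper substitutes $dA=\mu(A)+\bar A\cdot A$ into $\overline{dA}\cdot A+A\cdot dA$ to reach the right-hand side, whereas you expand the right-hand side directly and compare.
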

\begin{proof}
Indeed it's easy to verify the following equalities
$$
\begin{array}{c}
d\,\mu(A)=-d\,(\bar{A}\cdot A)=
-d\,\bar{A}\cdot A+A\cdot d\,A=\overline{d\,A}\cdot A+A\cdot d\,A=\\
=\overline{(\mu(A)+\bar{A}\cdot A)}\cdot A+A(\mu(A)+\bar{A}\cdot A)
=\overline{\mu(A)}\cdot A+A\cdot\mu(A).
\end{array}
$$
\end{proof}
\begin{corollary}[\cite{BaTa2}]
Let $A$ be the matrix of a formal connection,
then the entry $\tau \in \A$ of the matrix $\mu(A) \in I_n(\A)$
in the definition (\ref{star}) is closed.
\end{corollary}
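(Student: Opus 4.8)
The plan is to read off the claim from the generalized Bianchi identity (\ref{MCar}). Since $A$ is the matrix of a formal connection, $\mu(A)\in I_n(\A)$, so $\mu(A)$ has a single (possibly) nonzero entry $\tau$, sitting in the upper-right corner, i.e.\ in position $(1,n{+}1)$. Because $d$ acts entrywise on $T_n(\A)$ (formula (\ref{1.2.1})), the matrix $d\,\mu(A)$ has $d\tau$ in that same position and zeros elsewhere. Hence it is enough to prove that $d\,\mu(A)=0$, and for that I would analyse the right-hand side of (\ref{MCar}), namely $\overline{\mu(A)}\cdot A+A\cdot\mu(A)$.

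The first step is to note that the entrywise involution preserves the ideal, so $\overline{\mu(A)}\in I_n(\A)$ as well; thus each of the two summands $\overline{\mu(A)}\cdot A$ and $A\cdot\mu(A)$ is the product of a matrix supported only in the $(1,n{+}1)$ slot with the strictly upper triangular matrix $A$ (recall $A$ vanishes on the diagonal). The second step is the elementary matrix bookkeeping: if $E\in I_n(\A)$, then for $i\neq 1$ the $i$-th row of $EA$ is zero because the $i$-th row of $E$ is, while $(EA)_{1k}=E_{1,n{+}1}\,a_{n{+}1,k}=0$ since the last row of $A$ vanishes; symmetrically $AE=0$ because the first column of $A$ vanishes. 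Therefore $\overline{\mu(A)}\cdot A=A\cdot\mu(A)=0$.

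Combining these two steps, (\ref{MCar}) gives $d\,\mu(A)=0$, and comparing the $(1,n{+}1)$ entries yields $d\tau=0$, as asserted. I do not expect a genuine obstacle here: the only point requiring any care is the second step, that is, checking that the unique nonzero entry of an element of $I_n(\A)$ is annihilated on both sides by any matrix of $T_n(\A)$ (all of which have zero diagonal) — which is just the observation that such products collapse. Alternatively one could bypass (\ref{MCar}) and expand $d\,\mu(A)=d(dA-\bar A\cdot A)=-d\bar A\cdot A+A\cdot dA$ via the generalized Leibniz rule and then substitute $dA=\mu(A)+\bar A\cdot A$, but routing through the Bianchi identity is cleaner.
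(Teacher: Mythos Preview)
Your proof is correct and follows exactly the route implicit in the paper: the corollary is an immediate consequence of the generalized Bianchi identity (\ref{MCar}), and the paper itself notes right after the statement that ``$\mu(A)$ belongs to the ideal $I_n(\A)$ and hence $d\mu(A)=0$''. You have simply spelled out the one-line matrix check underlying that ``hence'', namely that any product of an element of $I_n(\A)$ with a strictly upper triangular matrix of $T_n(\A)$ vanishes.
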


Now let $A$ be the matrix of a formal connection, then
the matrix $\mu(A)$ belongs to the ideal $I_n(\A)$ and hence $d\mu(A)=0$.
In a formal sense $\mu(A)$ plays the role of the curvature
matrix of a formal connection $A$.

Let $A$ be an upper triangular matrix from $T_n(\A)$.
$$
A=\left(\begin{array}{cccccc}
  0      & a(1,1) & a(1,2) & \dots & a(1,n-1)   & a(1,n)   \\
  0      & 0      & a(2,2) & \dots & a(2,n-1)   & a(2,n)   \\
  \dots  & \dots  & \dots  & \dots & \dots      & \dots    \\
  0      & 0      & 0      & \dots & a(n-1,n-1) & a(n-1,n) \\
  0      & 0      & 0      & \dots & 0          & a(n,n)   \\
  0      & 0      & 0      & \dots & 0          & 0
  \end{array}\right).
$$

\begin{proposition}
A matrix $A \in T_n(\A)$ is the matrix of a formal connection if and only if
the following equalities hold
\begin{equation}
\label{def_syst}
\begin{split}
a(i,i)=a_i \in \A^{p_i}, \quad i=1,\dots,n;\\
a(i,j)\in\A^{p(i,j)+1}, \quad p(i,j)=\sum^j_{r=i}(p_r-1);\\
d\,a(i,j)=\sum_{r=i}^{j-1}\bar{a}(i,r)\wedge a(r+1,j),\;\;
(i,j)\neq(1,n).
\end{split}
\end{equation}
\end{proposition}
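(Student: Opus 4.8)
The statement is a direct unwinding of the matrix Maurer--Cartan equation (\ref{star}), so the plan is computational: translate the condition $\mu(A)=d\,A-\bar{A}\cdot A\in I_n(\A)$ into equations on the individual entries $a(i,j)$. First I would fix the bookkeeping convention that the entry $a(i,j)$ (with $1\le i\le j\le n$) occupies position $(i,j{+}1)$ of the $(n{+}1)\times(n{+}1)$ matrix, as in the displayed form of $A$; then $I_n(\A)$ is precisely the set of matrices all of whose entries vanish except possibly the one in position $(1,n{+}1)$, i.e.\ the $a(1,n)$-slot.

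Next I would compute the two matrices occurring in $\mu(A)$. By (\ref{1.2.1}) the $(i,j{+}1)$-entry of $d\,A$ is simply $d\,a(i,j)$. For $\bar{A}\cdot A$, ordinary matrix multiplication gives in position $(i,j{+}1)$ the sum $\sum_{m}\bar{A}_{i,m}A_{m,j+1}$; since $A$, hence $\bar{A}$, is strictly upper triangular in the chosen labelling, the index $m$ runs over $i<m\le j$, and setting $r=m{-}1$ turns this into $\sum_{r=i}^{j-1}\bar{a}(i,r)\wedge a(r{+}1,j)$. Thus the $(i,j{+}1)$-entry of $\mu(A)$ equals $d\,a(i,j)-\sum_{r=i}^{j-1}\bar{a}(i,r)\wedge a(r{+}1,j)$, and $\mu(A)\in I_n(\A)$ is equivalent to the vanishing of this expression for every pair $(i,j)\ne(1,n)$, which is exactly the third line of (\ref{def_syst}). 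Note that for $j=i$ the sum is empty, so these relations already force each diagonal-adjacent entry $a(i,i)$ to be a cocycle; this is the class one calls $a_i$.

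It remains to account for the degree constraints. The first, $a(i,i)=a_i\in\A^{p_i}$, records the normalization that the diagonal-adjacent entries are the homogeneous cocycles $a_1,\dots,a_n$ of the prescribed degrees $p_1,\dots,p_n$ whose Massey product is under consideration; as just observed, the relations force $d\,a(i,i)=0$. Granting this, the degree of $a(i,j)$ is then forced, which I would see by induction on $j-i$: since $p(i,i)=p_i-1$ the claim $a(i,j)\in\A^{p(i,j)+1}$ holds for $j=i$, and in the inductive step each summand $\bar{a}(i,r)\wedge a(r{+}1,j)$ lies in $\A^{(p(i,r)+1)+(p(r+1,j)+1)}=\A^{p(i,j)+2}$ by the additivity $p(i,r)+p(r{+}1,j)=p(i,j)$, so $d\,a(i,j)$, hence $a(i,j)$, is homogeneous of the stated degree. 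The converse is read off the very same computation backwards: if the three families of conditions hold, every entry of $\mu(A)$ other than the $a(1,n)$-slot vanishes, so $\mu(A)\in I_n(\A)$ and $A$ is the matrix of a formal connection. There is no genuine obstacle here; the only thing requiring care is the index shift between matrix positions and $a(i,j)$-labels, together with correctly transporting the involution $\bar{\phantom{a}}$ through the product $\bar{A}\cdot A$.
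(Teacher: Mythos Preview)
Your proposal is correct and matches the paper's approach: the paper gives no formal proof, merely remarking after the statement that ``the system (\ref{def_syst}) is just the Maurer-Cartan equation rewritten in terms of the entries of the matrix $A$''. Your computation fills in exactly this unwinding, with the correct handling of the index shift between matrix positions and the $a(i,j)$-labels and the inductive check of the degree constraints.
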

The system (\ref{def_syst}) is  just the Maurer-Cartan equation
rewritten in terms of the entries of the matrix $A$ and it
is a part of the classical definition \cite{K} of
the defining system for a Massey product.
\begin{definition}[\cite{K}]
A collection of elements,
$A=(a(i,j))$, for $1\leq i\leq j\leq n$ and $(i,j)\neq(1,n)$
is said to be a defining  system for the product
$\langle a_1,\dots,a_n\rangle$ if it satisfies (\ref{def_syst}).

Under these conditions
the $(p(1,n)+2)$-dimensional cocycle
$$
c(A)=\sum_{r=1}^{n-1}\bar{a}(1,r)\wedge a(r+1,n)
$$
is called the related cocycle of the defining system $A$.
\end{definition}

One can verify that the notion of the defining system is equivalent to
the notion of the formal connection. We have only to remark
that an entry $a(1,n)$ of a formal connection  $A$
does not belong to the corresponding defining system.
It can be taken as an arbitrary element from $\A$ and
for the only one nonzero (possibly) entry
$\tau \in \mu(A)$ we have 
$$
\tau=-c(A)+da(1,n).
$$

\begin{definition}[\cite{K}]
The $n$-fold product $\langle a_1,\dots,a_n\rangle$ is defined if
there exists at least one defining system for it
(a formal connection $A$ with
entries $a_1,\dots,a_n$ at the second diagonal). If it is defined, then
the value $\langle a_1,\dots,a_n\rangle$ is the set of all cohomology
classes $\alpha \in H^{p(1,n)+2}(\A)$ for which
there exists a defining system $A$
such that the cocycle $c(A)$ (or equivalently $-\tau$) represents $\alpha$.
\end{definition}

\begin{theorem}[see \cite{K},\cite{BaTa2}]
The product  $\langle a_1,\dots,a_n\rangle$ depends only on the
cohomology classes of the elements $a_1,\dots,a_n$.
\end{theorem}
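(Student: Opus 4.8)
The plan is to reduce to changing a single representative and then to write the modified defining system down by hand. Since $\langle a_1,\dots,a_n\rangle$ is, by its very definition, a subset of $H^{p(1,n)+2}(\A)$, it suffices to prove that this subset is unchanged when one of the cocycles, say $a_i$, is replaced by a cohomologous cocycle $\tilde a_i=a_i+db$ with $b\in\A^{\,p_i-1}$; iterating the argument over $i=1,\dots,n$ then gives the statement for an arbitrary change of representatives. Moreover, performing the same replacement once more with $-b$ in place of $b$ interchanges the two tuples, so it is enough to establish one inclusion, say $\langle a_1,\dots,\tilde a_i,\dots,a_n\rangle\subseteq\langle a_1,\dots,a_n\rangle$. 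Thus I fix a class $\alpha$ in the left--hand side and a defining system $A=(a(i',j'))$ for the tuple $(a_1,\dots,\tilde a_i,\dots,a_n)$ with $[c(A)]=\alpha$ --- equivalently, by the Proposition above, a formal connection with these entries on the second diagonal --- and must manufacture out of $A$ a defining system $A'$ for $(a_1,\dots,a_n)$ with $[c(A')]=\alpha$.

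Consider first the model case $i=1$, which already exhibits all the algebra. I would set
\begin{equation*}
a'(1,1)=a_1,\qquad a'(1,j)=a(1,j)+\bar b\,a(2,j)\ \ (2\le j\le n-1),\qquad a'(i',j')=a(i',j')\ \ (i'\ge 2),
\end{equation*}
and verify that the collection $A'$ satisfies the defining--system relations (\ref{def_syst}). Using only $db=\tilde a_1-a_1$ together with the sign rules $\bar x=(-1)^{k+1}x$ for $x\in\A^k$, $\overline{dx}=-d\bar x$ and $\overline{xy}=-\bar x\bar y$, the relation for $a'(1,j)$ collapses term by term onto the relation for $a(1,j)$ in $A$; the one cancellation that makes this work is the elementary identity $(-1)^{p_1}\bar b=b$, which identifies the Leibniz contribution $(-1)^{p_1-1}\bar b\,\bar a(2,r)$ with $-b\,\bar a(2,r)=\overline{\bar b\,a(2,r)}$, precisely the term produced by (\ref{def_syst}) on the other side. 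A direct computation of the related cocycle, using that $(2,n)\ne(1,n)$ so that (\ref{def_syst}) gives $\sum_{r=2}^{n-1}\bar a(2,r)\,a(r+1,n)=d\,a(2,n)$, then yields
\begin{equation*}
c(A')=c(A)+d\bigl(\bar b\,a(2,n)\bigr),
\end{equation*}
so that $[c(A')]=[c(A)]=\alpha$, as required.

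For a general index $i$ the same idea applies, but now the correction must be propagated from the entry $a(i,i)$ not only along the $i$--th row: modifying $a(i,j')$ feeds into the relation for each $a(i'',j')$ with $i''<i$, which therefore must be modified as well, and so on through the whole triangular block of entries $a(i',j')$ with $i'\le i\le j'$. I would organize this as an induction on the width $j'-i'$, at each step choosing $a'(i',j')$ to differ from $a(i',j')$ by an explicit expression in $b$ and the lower--width entries of $A$, so that (\ref{def_syst}) holds and, in the end, $c(A')$ differs from $c(A)$ by a coboundary; equivalently, in the formal--connection picture one expects the passage from $A$ to $A'$ to be a gauge transformation determined by the upper--unitriangular matrix differing from the identity only by $b$ in position $(i,i+1)$, to be checked by means of the generalized Leibniz rule $d(XY)=(dX)Y-\bar X(dY)$ and the Bianchi identity (\ref{MCar}). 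I expect the bookkeeping of signs to be the only real obstacle: since this Leibniz rule carries a minus sign and the involution sends degree--zero elements --- in particular the identity matrix --- to their negatives, the naive conjugation formula does not carry $\mu$ to a conjugate of $\mu$, and the bars on the correction terms, together with the shape of the cross--terms in the interior case, must be chosen so that the relations close up and the change in the related cocycle is an honest coboundary rather than a sign. Everything else --- the reductions above and the translation between formal connections and defining systems --- is formal and uses only the Proposition and Corollary already proved; for $n\le 2$ there are no off--diagonal entries at all and the claim reduces to the one--line verification that $[\overline{a_1'}\,a_2']=[\bar a_1\,a_2]$ whenever $a_1',a_2'$ are cohomologous to $a_1,a_2$.
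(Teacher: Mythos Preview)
The paper does not prove this theorem at all; it simply states it and cites Kraines~\cite{K} and Babenko--Taimanov~\cite{BaTa2}. So there is nothing to compare against except the classical argument in those references, which is exactly the ``change one representative at a time and write down the new defining system'' approach you are following. Your reduction (one index at a time, one inclusion by the $b\mapsto -b$ symmetry) is the standard one, and your computation for $i=1$ is correct, including the identity $(-1)^{p_1-1}\bar b=-b$ that makes the Leibniz term match, and the final $c(A')=c(A)+d(\bar b\,a(2,n))$.

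Where you hesitate --- the general index $i$ --- is actually no harder, and your fear of a cascading induction through the whole block $\{(i',j'):i'\le i\le j'\}$ is unfounded. One only needs to modify the $i$-th row \emph{and} the $i$-th column:
\[
a'(i,j)=a(i,j)+\bar b\,a(i+1,j)\quad(j>i),\qquad
a'(i',i)=a(i',i)+a(i',i-1)\,b\quad(i'<i),\qquad
a'(i,i)=a_i,
\]
and leave everything else unchanged. The relations for $da'(i,j)$ and $da'(i',i)$ check exactly as in your $i=1$ case (for the column, use $d(a(i',i-1)b)=da(i',i-1)\cdot b-\bar a(i',i-1)\,db$). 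The only new relations to inspect are $da'(i',j')$ with $i'<i<j'$: there both modified entries occur, in the two consecutive terms $r=i-1$ and $r=i$, and the extra contributions
\[
\bar a(i',i-1)\,\bar b\,a(i+1,j')\quad\text{and}\quad \overline{a(i',i-1)b}\,a(i+1,j')=-\bar a(i',i-1)\,\bar b\,a(i+1,j')
\]
cancel. For the related cocycle one finds, by the same two-term cancellation, that $c(A')=c(A)$ \emph{on the nose} when $1<i<n$; the boundary cases $i=1$ and $i=n$ are the ones where $c(A')-c(A)$ is a genuine coboundary, and you have already done $i=1$.

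Your instinct that the gauge picture $C=I+bE_{i,i+1}$ is the conceptual explanation is correct, but as you note, the conjugation identity proved in the paper applies only to \emph{scalar} $C\in GT_n(\mathbb K)$; with $b$ of positive degree one needs an inhomogeneous (``gauge'') term as well, and sorting out the bars there is more work than the three-line direct check above. So drop Plan B and just write out the row--plus--column modification: that closes the argument with no induction on width and no further sign-chasing.
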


\begin{definition}[\cite{K}]
A set of closed elements $a_i, i=1,\dots,n$ from $\A$ representing
some cohomology classes ${\alpha}_i \in H^{p_i}(\A), i=1,\dots,n$
is said to be a defining system for
the Massey $n$-fold product $\langle {\alpha}_1,\dots,{\alpha}_n\rangle$
if it is one for $\langle a_1,\dots,a_n\rangle$. The Massey
$n$-fold product $\langle {\alpha}_1,\dots,{\alpha}_n\rangle$
is defined if $\langle a_1,\dots,a_n\rangle$ is defined, in which case
$\langle {\alpha}_1,\dots,{\alpha}_n\rangle=\langle a_1,\dots,a_n\rangle$
as subsets in $H^{p(1,n)+2}(\A)$.
\end{definition}

For $n=2$ the matrix $A$ of a formal connection is
$$
A=\left(\begin{array}{ccc}
0 & a & c\\
0 & 0 & b\\
0& 0& 0
\end{array}\right)
$$
and the generalized Maurer-Cartan equation is equivalent
to the system $da=0, \; db=0$. Hence a $2$-fold Massey product $\langle \alpha, \beta \rangle$ is always defined and 
$\langle \alpha, \beta \rangle = \bar \alpha \wedge \beta$.

Let $\alpha$, $\beta$, and  $\gamma$ be the cohomology classes of closed
elements $a \in\A^p$, $b\in\A^q$, and  $c\in\A^r$.
The Maurer-Cartan equation for
$$
A=\left(\begin{array}{cccc}
0 & a & f & h \\
0 & 0 & b & g \\
0 & 0 & 0 & c \\
0 & 0 & 0 & 0
\end{array}\right).
$$
is equivalent to the system
\begin{equation}
d\,f=(-1)^{p+1}\,a\wedge b,\ \ d\,g=(-1)^{q+1}\,b\wedge c.
\label{ast}
\end{equation}
Hence the triple Massey product
$\langle\alpha,\beta,\gamma\rangle$
is defined if and only if the following conditions hold
$$
\alpha\cdot \beta=\beta \cdot \gamma=0\ \ \mbox{in}\ \ H^{\ast}(\A).
$$

The triple Massey product $\langle \alpha, \beta, \gamma \rangle$ is defined
as a subspace $H^{p{+}q{+}{r}{-}1}(\A)$ of elements
$$
\langle \alpha, \beta, \gamma \rangle=\left\{
[(-1)^{p+1} a\wedge g+(-1)^{p+q} f\wedge c]\right\}.
$$
Since $f$ and  $g$ are defined by (\ref{ast}) up to closed elements
from $\A^{p+q-1}$  and $\A^{q+r-1}$ respectively, the triple Massey product
$\langle \alpha,\beta,\gamma\rangle$ is an affine subspace of
$H^{p{+}q{+}{r}{-}1}(\A)$ parallel to
$\alpha \cdot H^{q+r-1}(\A)+ H^{p+q-1}(\A)\cdot \gamma$.

Sometimes the triple Massey product  $\langle \alpha, \beta, \gamma \rangle$ is defined as a quotient 
$\langle \alpha, \beta, \gamma \rangle/
(\alpha \cdot H^{q+r-1}(\A)+ H^{p+q-1}(\A)\cdot \gamma)$ \cite{Fu}.

\begin{definition}
Let an  $n$-fold Massey product
$\langle {\alpha}_1,\dots,{\alpha}_n\rangle$
be defined. It is called trivial if it contains the trivial cohomology class:
$0\in\langle {\alpha}_1,\dots,{\alpha}_n\rangle$.
\end{definition}

\begin{proposition}
\label{triviality}
Let a Massey product
$\langle {\alpha}_1,\dots,{\alpha}_n\rangle$
is defined. Then all Massey products
$\langle {\alpha}_l,\dots,{\alpha}_q\rangle, 1\le l < q \le n, q-l<n-1$
are defined and trivial.
\end{proposition}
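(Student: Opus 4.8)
The plan is to restrict the given defining system to the sub-interval $[l,q]$ and to read off both assertions directly from the structure equations (\ref{def_syst}).

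Let $A=(a(i,j))_{1\le i\le j\le n}$ be a defining system for $\langle\alpha_1,\dots,\alpha_n\rangle$, so that (\ref{def_syst}) holds. Consider the sub-collection $A'$ consisting of the entries $a(i,j)$ with $l\le i\le j\le q$ (the corresponding principal submatrix of $A$). I claim $A'$ is a defining system for $\langle\alpha_l,\dots,\alpha_q\rangle$. The diagonal entries $a(i,i)=a_i$, $i=l,\dots,q$, represent $\alpha_l,\dots,\alpha_q$, and the degree conditions $a(i,j)\in\A^{p(i,j)+1}$ with $p(i,j)=\sum_{r=i}^{j}(p_r-1)$ are inherited unchanged. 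For the differential equations one must check $d\,a(i,j)=\sum_{r=i}^{j-1}\bar a(i,r)\wedge a(r+1,j)$ for every $(i,j)$ with $l\le i\le j\le q$ and $(i,j)\ne(l,q)$. The crucial observation is that such a pair automatically satisfies $(i,j)\ne(1,n)$: the equality $(i,j)=(1,n)$ would force $l=1$, $q=n$, hence $q-l=n-1$, contradicting the hypothesis $q-l<n-1$. Therefore the needed equation is precisely the corresponding instance of (\ref{def_syst}) for the ambient system $A$, so $A'$ is a defining system and $\langle\alpha_l,\dots,\alpha_q\rangle$ is defined.

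It remains to see that $0\in\langle\alpha_l,\dots,\alpha_q\rangle$. The related cocycle of $A'$ is by definition $c(A')=\sum_{r=l}^{q-1}\bar a(l,r)\wedge a(r+1,q)$. Since again $(l,q)\ne(1,n)$ by $q-l<n-1$, applying (\ref{def_syst}) to the ambient system $A$ at the pair $(l,q)$ yields $d\,a(l,q)=\sum_{r=l}^{q-1}\bar a(l,r)\wedge a(r+1,q)=c(A')$. Thus $c(A')$ is a coboundary and represents $0\in H^{p(l,q)+2}(\A)$; by the definition of the value of a Massey product, $0\in\langle\alpha_l,\dots,\alpha_q\rangle$, i.e. the product is trivial.

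There is essentially no hard step: the content is the bookkeeping that any proper consecutive sub-block of a defining system is again a defining system, together with the remark that the ``missing corner'' equation of the sub-block --- which would be optional for a stand-alone defining system --- is in fact supplied by the ambient system precisely because that corner is not the corner $(1,n)$ of the whole matrix. The only point requiring a little care is the degenerate case $q=l+1$, where $c(A')=\bar a_l\wedge a_{l+1}$ and the statement specializes to the familiar fact that $\alpha_l\cdot\alpha_{l+1}=0$ whenever an $n$-fold product with $n\ge 3$ is defined; but this is covered by the same argument.
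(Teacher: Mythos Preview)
Your proof is correct and is exactly the standard argument: restrict the defining system to the consecutive principal sub-block and observe that the ``corner'' equation of the sub-block is supplied by the ambient system because $(l,q)\neq(1,n)$. The paper itself states this proposition without proof (it is a classical fact going back to Kraines~\cite{K}), so there is nothing to compare against; your write-up fills the gap cleanly.
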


The triviality of all Massey products
$\langle {\alpha}_l,\dots,{\alpha}_q\rangle, 1\le l < q \le n, q-l<n-1$
is only a necessary condition
for a Massey product $\langle {\alpha}_1,\dots,{\alpha}_n\rangle$
to be defined. It is sufficient only in the case $n=3$.

Let us denote by $GT_n({\mathbb K})$ a group of non-degenerate
upper triangular $(n{+}1,n{+}1)$-matrices of the form:
 \begin{equation}
\label{GT_n}
C=\left(\begin{array}{ccccc}
  c_{1,1}      & c_{1,2} & \dots & c_{1,n}   & c_{1,n{+}1}   \\
  0      & c_{2,2}      &  \dots & c_{2,n}  & c_{2,n{+}1}   \\
 \dots   &        &       \dots &            & \dots     \\
  0      & 0            & \dots & c_{n,n}& c_{n,n{+}1}   \\
  0      & 0            & \dots & 0          & c_{n{+}1,n{+}1}
  \end{array}\right).
\end{equation}
\begin{proposition}
Let $A \in T_n({\A})$ be the matrix of a formal connection and
$C$ an arbitrary matrix from $GT_n({\mathbb K})$. Then the matrix
$C^{-1}AC\in T_n({\A})$ and satisfies the Maurer-Cartan equation,
i.e. it is again the matrix of a formal connection.
\end{proposition}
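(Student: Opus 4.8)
The plan is to reduce the statement to the single identity $\mu(C^{-1}AC)=C^{-1}\mu(A)C$, after which the conclusion follows because conjugation by $C$ preserves the ideal $I_n(\A)$ and, by hypothesis, $\mu(A)\in I_n(\A)$.

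First I would record the structural facts. Since $GT_n(\mathbb K)$ is a group, $C^{-1}\in GT_n(\mathbb K)$ is again upper triangular; and conjugating a strictly upper triangular matrix by an invertible upper triangular one produces a strictly upper triangular matrix, so $C^{-1}AC\in T_n(\A)$. The entries of $C$ and of $C^{-1}$ lie in $\mathbb K\subseteq\A^0$, hence $d\,C=d\,C^{-1}=0$; moreover the involution acts on $\A^0$ by $\bar a=(-1)^{0+1}a=-a$, so as matrices $\bar C=-C$ and $\overline{C^{-1}}=-C^{-1}$.

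Next I would compute $\mu(C^{-1}AC)$ by repeated use of the generalized Leibniz rule $d\,(XY)=(d\,X)Y-\bar X(d\,Y)$ and of $\overline{XY}=-\bar X\bar Y$. Using $d\,C=d\,C^{-1}=0$ together with $\overline{C^{-1}}=-C^{-1}$ one gets $d\,(C^{-1}AC)=C^{-1}(d\,A)C$; using $\overline{C^{-1}}=-C^{-1}$ and $\bar C=-C$ one gets $\overline{C^{-1}AC}=C^{-1}\bar A\,C$. Therefore
\begin{align*}
\mu(C^{-1}AC)&=d\,(C^{-1}AC)-\overline{C^{-1}AC}\cdot C^{-1}AC\\
&=C^{-1}(d\,A)C-C^{-1}\bar A\,C\cdot C^{-1}AC\\
&=C^{-1}\bigl(d\,A-\bar A\cdot A\bigr)C=C^{-1}\mu(A)C .
\end{align*}

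Finally I would check that $C^{-1}I_n(\A)C\subseteq I_n(\A)$. Writing an element of $I_n(\A)$ as $\tau E$ with $\tau\in\A$ and $E$ the matrix unit supported in the $(1,n+1)$ corner, the upper triangularity of $C$ gives $EC=c_{n+1,n+1}E$ and $C^{-1}E=c_{1,1}^{-1}E$ (the diagonal entries $c_{1,1},c_{n+1,n+1}$ are nonzero, and $(C^{-1})_{1,1}=c_{1,1}^{-1}$ since $C$ is upper triangular), whence $C^{-1}(\tau E)C=(c_{n+1,n+1}/c_{1,1})\,\tau\,E\in I_n(\A)$. Combining this with the displayed identity yields $\mu(C^{-1}AC)=C^{-1}\mu(A)C\in I_n(\A)$, i.e.\ $C^{-1}AC$ solves the Maurer--Cartan equation and is the matrix of a formal connection. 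The only delicate point is the sign bookkeeping coming from the involution — especially the equality $\bar C=-C$ for a degree-zero scalar matrix — but these extra signs always occur in cancelling pairs, so none survives in the final formula.
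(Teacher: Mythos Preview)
Your argument is correct and follows essentially the same route as the paper: the paper's proof is the one-line identity $d(C^{-1}AC)-\overline{C^{-1}AC}\cdot C^{-1}AC=C^{-1}(dA-\bar A\cdot A)C\in I_n(\A)$, and you have simply unpacked the sign bookkeeping and the verification that conjugation preserves $I_n(\A)$ (your scaling factor $c_{n+1,n+1}/c_{1,1}$ is exactly the relation the paper records between $[c(A)]$ and $[c(C^{-1}AC)]$).
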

\begin{proof}
$$
d(C^{-1}AC)-\bar C^{-1}\bar A\bar C \wedge C^{-1}AC=
C^{-1}\left(dA-\bar A\wedge A\right)C \in I_n({\mathcal A}).
$$
It follows also that the associated classes $[c(A)]$ and $[c(C^{-1}AC)]$ with $C$ from (\ref{GT_n}) are related 
$$
[c(C^{-1}AC)]=\frac{c_{n+1,n+1}[c(A)]}{c_{1,1}}.
$$
\end{proof}
\begin{example}
Let $A \in T_n({\A})$ be the matrix of a formal connection
(defining system) for a Massey product
$\langle \alpha_1,\dots, \alpha_n\rangle$.
Then a matrix $C^{-1}AC$ with
$$
C=\left(\begin{array}{ccccc}
  1      & 0 & \dots & 0   & 0   \\
  0      & x_1      &  \dots & 0  & 0   \\
 \dots   &        &       \dots &            & \dots     \\
  0      & 0            & \dots &  x_1{\dots}x_{n{-}1}& 0   \\
  0      & 0            & \dots & 0          & x_1{\dots}x_{n{-}1}x_n
  \end{array}\right)
$$
is a defining system for $\langle x_1\alpha_1,\dots, x_n\alpha_n\rangle=
x_1\dots x_n\langle \alpha_1,\dots, \alpha_n\rangle$.
\end{example}

\begin{definition}
\label{A_equiv}
Two matrices $A$ and $A'$ of formal connections
 are equivalent if there exists a
non-degerate scalar matrix $C \in GT_n({\mathbb K})$ such that
$
A'=C^{-1}AC.
$
\end{definition}
It is  obvious that we can consider only the subgroup of non-degenerate diagonal matrices 
instead of the whole $GT_n({\mathbb K})$ in the last definition.

Following the original Massey's paper~\cite{Mass},
some higher order cohomological operations
that we call now Massey products were introduced
in the 1960s in \cite{K} and \cite{JPM}. May briefly noticed in \cite{JPM} that there is a relation between the difinition of the Massey products and 
Maurer-Cartan equation. However, this analogy
was not developed till the Babenko-Taimanov paper~\cite{BaTa2}.

In the present article we deal only with Massey products of
non-trivial cohomology classes. In general situation it is more natural to consider so-called matric Massey products that were first introduced by May in
\cite{JPM} and developed in \cite{BaTa2}. 


\section{Massey products and Lie algebras representations}
\label{g-modules}
Consider the cochain complex with trivial coefficients ${\mathbb K}$ of an $n$-dimensional Lie algebra $\mathfrak{g}$
$$
\begin{CD}
\mathbb K @>{d_0{=}0}>> \mathfrak{g}^* @>{d_1}>> \Lambda^2 (\mathfrak{g}^*) @>{d_2}>>
\dots @>{d_{n-1}}>>\Lambda^{n} (\mathfrak{g}^*) @>>> 0.
\end{CD}
$$
where $d_1: \mathfrak{g}^* \rightarrow \Lambda^2 (\mathfrak{g}^*)$
is a dual mapping to the Lie bracket
$[ \, , ]: \Lambda^2 \mathfrak{g} \to \mathfrak{g}$.
The differential $d$ (the whole collection of $d_p$)
is the derivation of the exterior algebra $\Lambda^*(\mathfrak{g}^*)$
that continues $d_1$
$$
d(\rho \wedge \eta)=d\rho \wedge \eta+(-1)^{deg\rho} \rho \wedge d\eta,
\; \forall \rho, \eta \in \Lambda^{*} (\mathfrak{g}^*).
$$
It is easy to see that the condition $d^2=0$ is equivalent to the Jacobi identity of $\mathfrak{g}$.

Continue to use dual language and write with its help the definition of the representation of a Lie algebra by square  $(n+1,n+1)$-matrices.

\begin{proposition}
A $(n+1,n+1)$-matrix $A$ with entries from ${\mathfrak{g}}^*$ defines
a representation $\rho: \mathfrak{g} \to {\mathfrak gl}_n({\mathbb K})$
if and only if $A$ satisfies the strong Maurer-Cartan equation
$$
dA-\bar A \wedge A=0.
$$
\end{proposition}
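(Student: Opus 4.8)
The plan is to unwind what it means for a matrix $A\in T_n(\mathfrak g^*)$ (upper triangular, zero diagonal, entries in $\mathfrak g^*$) to define a linear representation $\rho\colon\mathfrak g\to\mathfrak{gl}_{n+1}(\mathbb K)$, and then to match this condition term by term with the equation $dA-\bar A\wedge A=0$. Writing $A=(a_{ij})$ with $a_{ij}\in\mathfrak g^*$, one associates to $A$ the $\mathbb K$-linear map sending $x\in\mathfrak g$ to the numerical matrix $A(x)=(a_{ij}(x))$; this is manifestly linear in $x$, and the representation axiom is exactly that $x\mapsto A(x)$ be a Lie algebra homomorphism, i.e.\ $A([x,y])=A(x)A(y)-A(y)A(x)=[A(x),A(y)]$ for all $x,y\in\mathfrak g$.

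First I would compute the $(i,j)$ entry of $[A(x),A(y)]$, namely $\sum_{k}\bigl(a_{ik}(x)a_{kj}(y)-a_{ik}(y)a_{kj}(x)\bigr)$. This is precisely the evaluation on $(x,y)$ of the $2$-form $\sum_k a_{ik}\wedge a_{kj}\in\Lambda^2(\mathfrak g^*)$, up to the sign bookkeeping built into the involution $\bar{\,\cdot\,}$ (on degree-$1$ elements $\bar a=a$, so $\bar A\wedge A$ has $(i,j)$ entry $\sum_k a_{ik}\wedge a_{kj}$; one must check the sign convention $a\wedge b(x,y)=a(x)b(y)-a(y)b(x)$ matches the antisymmetrization used in the definition of $d_1$). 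On the other side, the $(i,j)$ entry of $A([x,y])$ is $a_{ij}([x,y])$, which by definition of the Chevalley--Eilenberg differential equals $-da_{ij}(x,y)$ (the dual map to the bracket, with the standard sign). Hence the homomorphism condition, entry by entry, reads $da_{ij}=\sum_k a_{ik}\wedge a_{kj}$ for all $i,j$, which is exactly $dA=\bar A\wedge A$, i.e.\ the strong Maurer--Cartan equation.

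Next I would record the two directions explicitly. If $A$ satisfies $dA-\bar A\wedge A=0$, the computation above shows $x\mapsto A(x)$ respects brackets, and linearity is automatic, so $\rho$ is a representation; conversely, if $A$ defines a representation, reading the homomorphism identity in coordinates and using that a $1$-form is determined by its values, together with that a $2$-form is determined by its values on pairs, forces each entry equation $da_{ij}=\sum_k a_{ik}\wedge a_{kj}$, hence $dA=\bar A\wedge A$. Finally I would note consistency with the ambient structure: since $A$ is strictly upper triangular, so is $A(x)$ for every $x$, and the representation lands in strictly upper triangular matrices; moreover $\bar A\wedge A$ is again strictly upper triangular with entries in $\Lambda^2(\mathfrak g^*)$, so the equation makes sense in $T_n(\mathfrak g^*)$ as claimed (here the fact that $I_n(\mathcal A)$ is replaced by $0$ reflects that we demand the \emph{strong} equation, not merely $\mu(A)\in I_n$).

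The only genuinely delicate point is the sign and antisymmetrization bookkeeping: one must verify that the convention for $a\wedge b$ on $1$-forms evaluated on a pair of vectors, the convention for $d_1$ as the dual of the bracket, and the convention for the involution $\bar{\,\cdot\,}$ all fit together so that $dA-\bar A\wedge A=0$ is literally the homomorphism identity and not off by a sign. I expect this to be routine once the conventions fixed earlier in Section~\ref{Massey_S} and at the start of Section~\ref{g-modules} are pinned down; everything else is a direct coordinate computation.
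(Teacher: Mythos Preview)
Your proposal is correct and follows essentially the same route as the paper: the paper's entire proof is the single identity
\[
(dA-\bar A\wedge A)(x,y)=A([x,y])-[A(x),A(y)],
\]
and you are just unpacking this identity entrywise. One small remark: you need not restrict to $A\in T_n(\mathfrak g^*)$, since the proposition as stated concerns arbitrary $(n{+}1)\times(n{+}1)$ matrices with entries in $\mathfrak g^*$; and your tentative sign $a_{ij}([x,y])=-da_{ij}(x,y)$ is the standard Chevalley--Eilenberg convention, whereas the paper's $d_1$ is declared to be the dual of the bracket (no minus), so with the paper's convention $(dA)(x,y)=A([x,y])$ on the nose and the identity above comes out with the signs exactly as written.
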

\begin{proof}
$
(dA-\bar A \wedge A)(x,y)=A([x,y])-\left[ A(x), A(y)\right],\;\forall x,y \in
\mathfrak{g}.
$
\end{proof}

In the Section \ref{Massey_S}
the involution of a graded $\A$ was defined as $\bar a=(-1)^{k{+}1} a, a \in \A^k$.
Thus, for a matrix $A$ with entries in ${\mathfrak{g}}^*$
we have $\bar A =A$. One has to remark that $\bar a$ differs
by the sign from the definition of the involution $\bar a$ in \cite{K},
however, in \cite{JPM2} there is the same sign rule.

From now on, we will consider only representations in upper triangular matrices.
So, we denote by $T_n({\mathbb K}) \subset {\mathfrak gl}_n({\mathbb K})$ the Lie subalgebra of upper triangular $(n+1,n+1)$-matrices and consider representations
$\rho: \mathfrak{g} \to T_n({\mathbb K})$ of
a Lie algebra $\mathfrak{g}$ for some fixed value $n$. 

Consider $n=1$ and a linear map
$$
\rho: x \in \mathfrak{g} \to
A(x)=\left(\begin{array}{cc}
0 & a(x) \\
0& 0
\end{array}\right).
$$
It is evident that
$\rho$ is a Lie algebra homomorphism if and only if
the linear form $a \in {\mathfrak{g}}^*$ is closed.
Or equivalently,
$A$
satisfies the strong Maurer-Cartan equation
$dA-\bar A \wedge A =0$.

The Lie algebra $T_n({\mathbb K})$ has a
one-dimensional center $I_n({\mathbb K})$ spanned by the matrix
$$
\left(\begin{array}{cccc}
0 & \dots & 0 & 1 \\
0 & \dots & 0 & 0 \\
& \dots& & \\
0 & \dots & 0 & 0
\end{array}\right).
$$
One can consider an one-dimensional central extension
$$
\begin{CD}
0 @>>> \mathbb K \cong I_n({\mathbb K}) @>>>
T_n(\mathbb K) @>{\pi}>> \tilde T_n(\mathbb K) @>>> 0.
\end{CD}
$$

\begin{proposition}[\cite{FeFuRe}, \cite{Dw}]
Fixing a Lie algebra homomorphism $\tilde \varphi : \mathfrak{g} \to
\tilde T_n({\mathbb K})$ is equivalent to fixing a defining system
$A$ with elements from $\mathfrak{g}^*=\Lambda^1(\mathfrak{g})$.
The related cocycle $c(A)$ is cohomologious to zero
if and only if $\tilde \varphi$ can be lifted to a
homomorphism $\varphi : \mathfrak{g} \to  T_n({\mathbb K})$,
$\tilde \varphi = \pi \varphi$.
\end{proposition}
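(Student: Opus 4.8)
The plan is to prove everything by translating through the dictionary supplied by the two Propositions already proved in this section: the one characterizing representations $\mathfrak{g}\to\mathfrak{gl}_n({\mathbb K})$ by the strong Maurer--Cartan equation $dA-\bar A\wedge A=0$, and the one identifying the Maurer--Cartan equation (\ref{star}) with the defining-system relations (\ref{def_syst}). Throughout take $\A=\Lambda^*(\mathfrak{g}^*)$, so that $\bar a=a$ for $a\in\mathfrak{g}^*=\A^1$ and $\bar A=A$ whenever the entries of $A$ are $1$-forms. The point to keep in focus is that the central line $I_n({\mathbb K})=\ker\pi$ is exactly the discrepancy on both sides at once: it is what separates $T_n$ from $\tilde T_n$, and, as the text already records, it is also what separates a formal connection (which has a $(1,n)$-entry) from a defining system (which does not), the link being $\tau=-c(A)+da(1,n)$.

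\emph{The bijection.} Given a homomorphism $\tilde\varphi:\mathfrak{g}\to\tilde T_n({\mathbb K})$, take the unique linear lift $x\mapsto A(x)\in T_n({\mathbb K})$ whose $(1,n{+}1)$-entry vanishes, and encode it by a strictly upper-triangular matrix $A$ with entries in $\mathfrak{g}^*$ and zero $(1,n)$-entry. Since all the relevant degrees are $p_i=1$, the constraint $a(i,j)\in\A^{p(i,j)+1}$ of (\ref{def_syst}) reads $a(i,j)\in\A^1=\mathfrak{g}^*$, so there is no degree obstruction to viewing $A$ as a candidate defining system. By the proof of the representation Proposition, $(dA-\bar A\wedge A)(x,y)=A([x,y])-[A(x),A(y)]$, so $\tilde\varphi=\pi A$ is a homomorphism iff $A([x,y])-[A(x),A(y)]\in I_n({\mathbb K})$ for all $x,y$, i.e. iff $\mu(A)=dA-\bar A\wedge A\in I_n(\A)$, which by the Proposition relating (\ref{star}) and (\ref{def_syst}) holds iff the entries $a(i,j)$, $(i,j)\neq(1,n)$, satisfy (\ref{def_syst}), i.e. iff $A$ is a defining system for $\langle a_1,\dots,a_n\rangle$ with $a_i=a(i,i)$. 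Conversely any defining system of $1$-forms fills in (put $0$ in the $(1,n)$-slot) to such a matrix and hence to a homomorphism $\pi A$. As two matrices of this shape agree mod $I_n$ iff they agree off the top-right corner, this gives a bijection between homomorphisms $\mathfrak{g}\to\tilde T_n({\mathbb K})$ and defining systems with entries in $\Lambda^1(\mathfrak{g})$.

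\emph{The lifting criterion.} Keep the defining system $A$ attached to $\tilde\varphi$, and for $h\in\mathfrak{g}^*=\Lambda^1(\mathfrak{g})$ let $A_h$ be obtained from $A$ by placing $h$ in the $(1,n)$-slot. Changing only that entry does not affect $\bar A_h\wedge A_h$ nor the off-corner entries of $dA_h$, so $\mu(A_h)\in I_n(\A)$ with its single entry equal to $\tau=-c(A)+dh$. By the representation Proposition, $x\mapsto A_h(x)$ is a Lie algebra homomorphism $\mathfrak{g}\to T_n({\mathbb K})$ iff $\mu(A_h)=0$, i.e. iff $c(A)=dh$; and $A_h\equiv A\pmod{I_n}$ gives $\pi A_h=\tilde\varphi$, so such an $A_h$ is a lift of $\tilde\varphi$. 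Since every linear lift of $\tilde\varphi$ to $T_n({\mathbb K})$ differs from $A$ only in the $(1,n)$-slot, it equals $A_h$ for a unique $h\in\mathfrak{g}^*$. Hence $\tilde\varphi$ lifts to a homomorphism $\varphi:\mathfrak{g}\to T_n({\mathbb K})$ iff $c(A)=dh$ for some $h\in\Lambda^1(\mathfrak{g})$; as $c(A)$ is a closed $2$-cochain (the Corollary gives that $\tau$, equal to $-c(A)$ when $h=0$, is closed), this is precisely the condition that $c(A)$ is cohomologous to zero, completing the proof.

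\emph{Main obstacle.} There is no deep difficulty here: the whole argument is a chain of translations between the three Propositions already in hand. The only genuine care needed is the bookkeeping around $I_n$ — recognizing that dropping the $(1,n{+}1)$-coordinate of the matrix corresponds simultaneously to passing to $\tilde T_n$ and to passing from formal connections to defining systems, and that filling it back in with a $1$-form $h$ is exactly the freedom $c(A)\mapsto c(A)-dh$ that is used to annihilate the obstruction class $[c(A)]\in H^2(\mathfrak{g})$.
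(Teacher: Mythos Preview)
The paper does not give its own proof of this Proposition; it is stated with citations to \cite{FeFuRe} and \cite{Dw} and left unproved. Your argument is correct and is exactly the natural one given the machinery the paper has already assembled: you use the representation Proposition to translate ``$\pi A$ is a homomorphism'' into $\mu(A)\in I_n(\A)$, then the equivalence of (\ref{star}) with (\ref{def_syst}) to identify this with the defining-system relations, and finally the displayed identity $\tau=-c(A)+da(1,n)$ to read off that a strict lift exists iff $c(A)$ is exact. The check that altering only the $(1,n{+}1)$-entry leaves $\bar A_h\wedge A_h$ unchanged (because that entry can only meet zero entries in the matrix product) is the one place where a reader might pause, and you handled it correctly.
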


There is a standard definition.
\begin{definition}
Two representations
$\varphi : \mathfrak{g} \to T_n({\mathbb K})$ and
$\varphi' : \mathfrak{g} \to T_n({\mathbb K})$ are called 
equivalent if there  exists
a  non-degenerate matrix $C \in GL(n{+}1,{\mathbb K})$ such that
$$
\varphi'(g)=C^{-1}\varphi(g)C, \; \forall g \in \mathfrak{g}.
$$
\end{definition}
It is evident that this definition
is equivalent to the Definition \ref{A_equiv} for 
Massey products 
of $1$-cohomology classes $\omega_1,\dots, \omega_n$. Moreover, these linear forms are on the second (top) main diagonal of the matrix $A$
of the formal connection.

\begin{proposition}
Let a Massey product $\langle \omega_1,\omega_2,{\dots},\omega_n \rangle$
be defined and trivial in $H^2(\mathfrak{g})$ for some $1$-cohomology classes
$\omega_i \in H^1(\mathfrak{g})$ of a Lie algebra $\mathfrak{g}$. Then
$\langle x_1\omega_1,x_2\omega_2,{\dots},x_n\omega_n \rangle$
is also defined and trivial for any choice of non-zero constants
$x_1,x_2,{\dots},x_n$.
\end{proposition}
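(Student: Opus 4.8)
The plan is to reduce the statement to the conjugation action of diagonal scalar matrices on formal connections, which was already recorded above. First I would unwind the hypothesis: since $\langle\omega_1,\dots,\omega_n\rangle$ is defined and trivial, by the definition of the value of a Massey product there is a defining system, i.e.\ a formal connection $A\in T_n(\Lambda^*(\mathfrak g^*))$ whose second-diagonal entries $a(i,i)$ are closed $1$-forms representing $\omega_1,\dots,\omega_n$, and which in addition satisfies $[c(A)]=0$ in $H^2(\mathfrak g)$. Note that all entries $a(i,j)$ lie in degree $p(i,j)+1=1$, so $\bar A=A$; this is not essential for the argument but keeps the Maurer--Cartan equation compact.

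Next I would put $C=\mathrm{diag}(1,\,x_1,\,x_1x_2,\,\dots,\,x_1\cdots x_n)$, which is a non-degenerate element of $GT_n(\mathbb K)$ precisely because every $x_i\neq 0$. By the proposition on conjugation, $C^{-1}AC$ again lies in $T_n(\Lambda^*(\mathfrak g^*))$ and again satisfies the Maurer--Cartan equation; since $C$ is diagonal with constant entries, $C^{-1}AC$ is still strictly upper triangular with all entries in $\Lambda^1(\mathfrak g^*)$, and the entry in the position of $a(i,i)$ becomes $c_i^{-1}a(i,i)c_{i+1}=x_i\,a(i,i)$, a representative of $x_i\omega_i$, because the consecutive ratios $c_{i+1}/c_i$ of the diagonal of $C$ telescope to $x_i$. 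Since the Maurer--Cartan equation is equivalent to the system~(\ref{def_syst}), the matrix $C^{-1}AC$ is a genuine defining system for $\langle x_1\omega_1,\dots,x_n\omega_n\rangle$, which already shows that this product is defined.

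Finally, for triviality I would invoke the scaling formula from the same proposition, namely $[c(C^{-1}AC)]=\frac{c_{n+1,n+1}}{c_{1,1}}[c(A)]$. Here $c_{1,1}=1$ and $c_{n+1,n+1}=x_1\cdots x_n$, hence $[c(C^{-1}AC)]=x_1\cdots x_n\,[c(A)]=0$ by the assumed triviality of $\langle\omega_1,\dots,\omega_n\rangle$. Therefore $0\in\langle x_1\omega_1,\dots,x_n\omega_n\rangle$, which is the desired triviality; this is also consistent with the identity $\langle x_1\omega_1,\dots,x_n\omega_n\rangle=x_1\cdots x_n\,\langle\omega_1,\dots,\omega_n\rangle$ from the Example above.

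I would expect no real obstacle: the whole proof is bookkeeping with the conjugation action, using nothing beyond the results already stated. The only points that deserve an explicit line are (i) that conjugation by this particular $C$ produces exactly the scalars $x_i$ on the second diagonal — a one-line computation with the telescoping ratios $c_{i+1}/c_i$ — and (ii) that the conjugated matrix keeps all entries in $\Lambda^1(\mathfrak g^*)$, so that it really is a defining system in the sense appropriate to Massey products of $1$-dimensional classes, rather than merely a formal connection over the full cochain algebra.
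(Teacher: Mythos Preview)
Your proof is correct and follows exactly the route the paper intends: the proposition is stated in the paper without an explicit proof, but it is an immediate consequence of the earlier Proposition on conjugation $A\mapsto C^{-1}AC$ and the Example with the diagonal matrix $C=\mathrm{diag}(1,x_1,x_1x_2,\dots,x_1\cdots x_n)$, which already records the identity $\langle x_1\alpha_1,\dots,x_n\alpha_n\rangle=x_1\cdots x_n\langle\alpha_1,\dots,\alpha_n\rangle$. Your write-up simply makes that deduction explicit, including the scaling formula $[c(C^{-1}AC)]=(c_{n+1,n+1}/c_{1,1})[c(A)]$, so there is nothing to add or correct.
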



\section{$k$-step Massey products in Lie algebra cohomology}
\label{k-step}
This paragraph is written largely under the influence of an article by Ido Efrat  \cite{efrat}. His main observation was
the following theorem.
\begin{theorem}[Efrat~\cite{efrat}]
\label{ThEfr}
Suppose that for every set  $\omega_1,\dots,\omega_n$  of  $1$-cocycles from $H^1({\mathfrak g})$ there is a matrix $A$ of formal connection. Then the Massey product 
$$
\langle \cdot,  \dots,\cdot \rangle: \underbrace{H^1(\mathfrak g)\times \dots \times H^1(\mathfrak g)}_{n \rm\ times} \to H^2({\mathfrak g})
$$ 
is a well-defined single-valued map. 
\end{theorem}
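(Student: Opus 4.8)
The plan is to show that the set $\langle\omega_1,\dots,\omega_n\rangle\subseteq H^2(\mathfrak g)$ — which is nonempty by the hypothesis, and which by the Theorem of \cite{K},\cite{BaTa2} quoted above depends only on the classes $\omega_i$ and not on their cocycle representatives — consists of a single element. For $n\le 2$ there is nothing to prove (a $2$-fold product is the cup product), so assume $n\ge 3$. The first observation is that the hypothesis forces the cup product $H^1(\mathfrak g)\otimes H^1(\mathfrak g)\to H^2(\mathfrak g)$ to vanish identically: given $\eta,\eta'\in H^1(\mathfrak g)$, apply the hypothesis to an $n$-tuple with $\omega_1=\eta$, $\omega_2=\eta'$; since $1<n-1$, Proposition \ref{triviality} shows that $\langle\eta,\eta'\rangle=\eta\wedge\eta'$ is trivial, i.e. $\eta\wedge\eta'=0$. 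More generally, embedding any collection of at most $n-1$ classes of $H^1(\mathfrak g)$ as consecutive entries of an $n$-tuple and invoking Proposition \ref{triviality}, we get that every Massey product of at most $n-1$ classes of $H^1(\mathfrak g)$ is defined and trivial; this is what I will use repeatedly.

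Next I argue by induction on $m$, with $2\le m\le n$, for the statement $P(m)$: every $m$-fold Massey product of classes of $H^1(\mathfrak g)$ is single-valued. Then $P(n)$ is the theorem and $P(2)$ holds by the previous paragraph (the cup product being zero). For the inductive step, fix $1$-cocycles $a_1,\dots,a_m$ and two defining systems $A=(a(i,j))$ and $A'=(a'(i,j))$ for $\langle\omega_1,\dots,\omega_m\rangle$ sharing this common second diagonal; I must prove $[c(A)]=[c(A')]$, where $c(A)=\sum_{r=1}^{m-1}\bar a(1,r)\wedge a(r+1,m)$. I connect $A$ to $A'$ by a finite chain of \emph{elementary moves}, performed in order of increasing width $w=j_0-i_0+1$ of the entry $a(i_0,j_0)$ being modified. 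Once $A$ and $A'$ agree on all entries of width $<w$, the difference $z:=a'(i_0,j_0)-a(i_0,j_0)$ is a \emph{closed} $1$-form (at that stage the right-hand sides of \eqref{def_syst} for the two entries coincide), and the move replaces $a(i_0,j_0)$ by $a(i_0,j_0)+z$ and re-solves \eqref{def_syst} for all entries $a(i,j)$ with $i\le i_0$, $j\ge j_0$, $(i,j)\ne(1,m)$. This re-solution exists because, at each such entry, the right-hand side changes only by cocycles which are exact — by the vanishing of cup products and the inductive hypothesis — indeed each such correction is a related cocycle of a defining system for a Massey product of fewer than $m$ one-classes.

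It remains to see that an elementary move does not change $[c(A)]$, and this is the technical heart. The claim is that the change $\delta c=c(\mathrm{new})-c(\mathrm{old})$ of the related cocycle equals, modulo an exact form, the related cocycle of some defining system for the Massey product $\langle\omega_1,\dots,\omega_{i_0-1},[z],\omega_{j_0+1},\dots,\omega_m\rangle$ — a product of $m-w+1\le m-1$ classes of $H^1(\mathfrak g)$, hence single-valued by the inductive hypothesis, and defined and trivial by Proposition \ref{triviality}, so equal to $\{0\}$. Therefore $[\delta c]=0$, whence $[c(A)]=[c(A')]$, and $P(m)$ follows; taking $m=n$ proves the theorem. (A minor point fed back into the induction: for $m<n$ the common value of an $m$-fold product is trivial by Proposition \ref{triviality}, hence equals $0$.) The hard part is precisely the claim about $\delta c$: one must track how the perturbation $z$ in the single entry $a(i_0,j_0)$ propagates up the rows and along the columns of the triangular system \eqref{def_syst}, and then recognize the resulting sum of corrections to $\delta c=\sum_r\bigl(\bar a'(1,r)\wedge a'(r+1,m)-\bar a(1,r)\wedge a(r+1,m)\bigr)$ as exactly the related cocycle assembled from the potentials produced during the propagation. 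Conceptually this amounts to the statement that, once all proper consecutive sub-products vanish, the indeterminacy of $\langle\omega_1,\dots,\omega_n\rangle$ collapses to $\omega_1\wedge H^1(\mathfrak g)+H^1(\mathfrak g)\wedge\omega_n$, which is zero by the first paragraph; verifying this collapse is a matter of careful bookkeeping rather than of any new idea.
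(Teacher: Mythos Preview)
The paper does not actually contain a proof of this theorem: it is stated with attribution to Efrat~\cite{efrat} and then the section moves on to ``discuss other ideas of~\cite{efrat} in a revised form''. So there is no proof in the paper to compare your proposal against.

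On its own merits, your strategy is the standard one and is sound. The reduction in your first paragraph---that the hypothesis forces every Massey product of at most $n-1$ classes in $H^1(\mathfrak g)$ to be defined and trivial, via Proposition~\ref{triviality}---is correct and is exactly what drives the argument. The inductive scheme, connecting two defining systems by a chain of elementary moves ordered by the width of the entry being altered, and identifying the resulting change in $[c(A)]$ with (a representative of) a lower-order Massey product $\langle\omega_1,\dots,\omega_{i_0-1},[z],\omega_{j_0+1},\dots,\omega_m\rangle$, is the right mechanism; note that $z$ is indeed a closed $1$-form since all entries of a defining system for $1$-cocycles lie in $\mathfrak g^*$, so the substituted product really is a product of classes in $H^1(\mathfrak g)$ and your earlier observation applies.

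That said, you are candid that the ``careful bookkeeping'' establishing the key claim---that $\delta c$ equals, up to an exact term, the related cocycle of a defining system for that shorter product---is not carried out. This is precisely the content of the argument and is where a reader would want to see the details; without it the proposal is an outline rather than a proof. If you want to complete it, one clean way is to organize the propagation of the perturbation $z$ as a second upper-triangular matrix $B$ whose $(i_0,j_0)$-entry is $z$, and to verify directly that the entries of $B$ produced by re-solving~\eqref{def_syst} constitute a defining system for the shorter product, with $\delta c$ its related cocycle modulo $d$ of the discarded $(1,m)$-entry. This is essentially the computation appearing in the proof of Theorem~\ref{mainlemma} later in the paper (Part~II there), transposed to the present setting.
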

However, in this section we want to discuss other ideas of \cite{efrat} in a revised form, in particular, to define higher order Massey products $\langle a_1, a_2, \dots, a_n\rangle$ as successive obstructions $\langle a_1, a_2, \dots, a_n\rangle_k, k=1, 2, \dots, n-1,$ to the existence of a formal connection $A$ for a given set of one-dimensional cocycles $a_1, a_2, \dots, a_k$ from the differential algebra ${\mathcal A}$.

Consider the descending series  $\{ T_n^{n-k}({\mathcal A}), k=0,\dots, n-1\}$ of two-sided ideals
$$
T_n^{n-1}({\mathcal A})=T_n({\mathcal A}) \subset T_n^{n-2}({\mathcal A})
\subset \dots \subset T_n^{n-k}({\mathcal A}) \subset  \dots
\subset T_n^0({\mathcal A})=I_n({\mathcal A}) \subset \{0\},
$$
where $T_n^{n-k}({\mathcal A})$ denotes the subspace  of
upper triangular $(n{+}1,n{+}1)$-matrices $B$ with entries from  ${\mathcal A}$
of the following form
$$
B=\left(\begin{array}{ccccccc}
  0&0&\dots& 0&        b(1,k) & \dots  & b(1,n)   \\
  0&\ddots & \ddots& \ddots&0  &\ddots  & \vdots   \\
 \vdots   &\ddots&\ddots&  \ddots      &       \ddots &   \ddots         & b(n{-}k,n)     \\
    \vdots&& \ddots &   \ddots        &  \ddots& \ddots &   0 \\
      \vdots&&  &  \ddots         & \ddots &  \ddots&    \vdots\\
    \vdots    &&  &             &\ddots & \ddots&0    \\
  0     &\dots& \dots& \dots            & \dots & 0         & 0\\
  \end{array}\right), b(i,j) \in {\mathcal A}.
$$
Obviously for non-negative integers $k,m, k+m \le n,$ we have inclusions
\begin{equation}
\label{triangular_mult}
 T_n^{n-k}(\mathcal A) \cdot  T_n^{n-m}(\mathcal A) \subset  T_n^{n-k-m}(\mathcal A).
\end{equation}
\begin{definition}
Consider a non-negative integer $k, 0 \le k \le n-1$. A matrix $A_k \in T_n(\mathcal A)$ is called the matrix of
a $k$-step formal connection if it satisfies the $k$-step  Maurer-Cartan equation
\begin{equation}
\mu(A_k)=d\,A_k-\bar{A_k}\cdot A_k \in T_n^{n-k}(\mathcal A).
\label{k-star}
\end{equation}
\end{definition}
\begin{proposition}
Let $A_k$ be the matrix of a $k$-step formal coonection. Then  
$$
d\mu(A_k) \in T_n^{n-k-1}(\mathcal A).
$$
\end{proposition}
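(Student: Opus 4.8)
The plan is to use the generalized Bianchi identity \eqref{MCar}, which holds for any matrix $A_k \in T_n(\mathcal A)$ since its proof in the excerpt uses only the algebraic properties of $d$ and the involution, not the Maurer-Cartan equation itself. Applying it to $A = A_k$ gives
\begin{equation*}
d\,\mu(A_k) = \overline{\mu(A_k)}\cdot A_k + A_k \cdot \mu(A_k).
\end{equation*}
So the entire expression $d\,\mu(A_k)$ is a sum of two products, each having $\mu(A_k)$ as one factor and $A_k$ as the other.

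Next I would invoke the hypothesis that $A_k$ is a $k$-step formal connection, i.e. $\mu(A_k) \in T_n^{n-k}(\mathcal A)$, together with the trivial observation that $A_k \in T_n(\mathcal A) = T_n^{n-1}(\mathcal A)$. The involution $\overline{\,\cdot\,}$ preserves the ideal $T_n^{n-k}(\mathcal A)$ since it only changes signs of entries, so $\overline{\mu(A_k)} \in T_n^{n-k}(\mathcal A)$ as well. Now the multiplicativity of the descending series, inclusion \eqref{triangular_mult}, applied with the pair of indices corresponding to $T_n^{n-k}(\mathcal A) \cdot T_n^{n-1}(\mathcal A) \subset T_n^{n-k-1}(\mathcal A)$ (that is, with $k$ and $m=1$ in the notation of \eqref{triangular_mult}, since $n-k$ corresponds to the first factor and $n-1 = n-m$ with $m=1$ to the second), shows that both $\overline{\mu(A_k)}\cdot A_k$ and $A_k \cdot \mu(A_k)$ lie in $T_n^{n-k-1}(\mathcal A)$. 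Since $T_n^{n-k-1}(\mathcal A)$ is a subspace, their sum $d\,\mu(A_k)$ lies in it too, which is exactly the claim.

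The only point requiring a little care is bookkeeping with the indices in \eqref{triangular_mult}: one must check that $k+m \le n$ holds when we take $m=1$, i.e. that $k \le n-1$, which is precisely the standing assumption on $k$ in the definition of a $k$-step formal connection. One should also note that when $k = n-1$ the target ideal $T_n^{n-k-1}(\mathcal A) = T_n^{0}(\mathcal A) = I_n(\mathcal A)$ is the center, recovering the Corollary from Section \ref{Massey_S} (closedness of the curvature entry $\tau$) as the special case. I do not anticipate any genuine obstacle here: the statement is essentially the Bianchi identity combined with the filtration property \eqref{triangular_mult}, and the argument is a two-line computation once those two ingredients are in place.
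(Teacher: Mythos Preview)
Your proof is correct and follows exactly the paper's approach: apply the generalized Bianchi identity to write $d\mu(A_k)=\overline{\mu(A_k)}\cdot A_k+A_k\cdot\mu(A_k)$, then use $\mu(A_k)\in T_n^{n-k}(\mathcal A)$, $A_k\in T_n^{n-1}(\mathcal A)$ and the filtration inclusion \eqref{triangular_mult}. Your write-up is in fact more careful than the paper's two-line version, spelling out the index check and the invariance of the ideal under the involution.
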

\begin{proof}
It follows from (\ref{triangular_mult}). Indeed  the generalized Bianci identity
(\ref{MCar}) means
$$
\begin{array}{c}
d\,\mu(A_k)=-d\,(\bar{A_k}\cdot A_k)
=\overline{\mu(A_k)}\cdot A_k+A_k\cdot\mu(A_k) \in  T_n^{n-k}(\mathcal A) \cdot  T_n^{n-1}(\mathcal A).
\end{array}
$$
\end{proof}
\begin{corollary}
Let $A_k$ be the matrix of a $k$-step formal connection, $0 \le k \le n{-}1$.
Then all elements on the $k$-th parallel main diagonal of the matrix $\mu(A_k)=(\tau(i,j))$ are closed
$$
d\tau(1,k)=0, \; d\tau(2,k{+}1)=0,\; \dots,\;  d\tau(n{-}k,n)=0.
$$
\end{corollary}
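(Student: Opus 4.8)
The plan is to read this off from the preceding Proposition, using only that the differential on $T_n(\mathcal A)$ is applied entrywise, $d\,B=(d\,b(i,j))$, as in (\ref{1.2.1}); in particular $d$ preserves the zero-pattern of a matrix. Two facts drive the argument. First, since $A_k$ is a $k$-step formal connection, $\mu(A_k)=(\tau(i,j))$ lies in $T_n^{n-k}(\mathcal A)$, so every entry $\tau(i,j)$ lying on a parallel diagonal strictly closer to the main diagonal than the $k$-th one already vanishes, and the entries $\tau(1,k),\tau(2,k+1),\dots,\tau(n-k,n)$ are exactly those present in $T_n^{n-k}(\mathcal A)$ but not in the smaller ideal $T_n^{n-k-1}(\mathcal A)$. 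Second, the preceding Proposition gives $d\mu(A_k)\in T_n^{n-k-1}(\mathcal A)$; recall this comes from the generalized Bianchi identity (\ref{MCar}) together with the multiplicativity (\ref{triangular_mult}) for $m=1$, using $A_k\in T_n(\mathcal A)=T_n^{n-1}(\mathcal A)$.

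Then I would simply compare entries. Since $d$ acts entrywise, the $(i,j)$-entry of $d\mu(A_k)$ is $d\,\tau(i,j)$; membership of $d\mu(A_k)$ in $T_n^{n-k-1}(\mathcal A)$ forces all of its entries on the $k$-th parallel diagonal — and on every diagonal closer to the main one, where $\tau(i,j)$ already vanished — to be zero. Restricting to the $k$-th parallel diagonal gives exactly
$$
d\,\tau(1,k)=d\,\tau(2,k+1)=\dots=d\,\tau(n-k,n)=0,
$$
which is the assertion.

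The only point requiring attention is the index bookkeeping: one must check that the list $\tau(1,k),\dots,\tau(n-k,n)$ names precisely the matrix positions that occur in $T_n^{n-k}(\mathcal A)$ but disappear in $T_n^{n-k-1}(\mathcal A)$. Once the conventions for the chain $\{T_n^{j}(\mathcal A)\}$ are pinned down this identification is immediate, so there is no real obstacle here and no computation is needed beyond what already appears in the previous Proposition.
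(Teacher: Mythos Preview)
Your proposal is correct and is exactly the intended argument: the paper states this as an immediate Corollary of the preceding Proposition without further proof, and your unpacking---using that $d$ acts entrywise together with the containment $d\mu(A_k)\in T_n^{n-k-1}(\mathcal A)$ to force vanishing of the $k$-th diagonal entries of $d\mu(A_k)$---is precisely how one reads it off.
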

From now on, we will consider matrices of formal connections with elements $\omega$ from the dual space  ${\mathfrak g}^*$ to the Lie algebra ${\mathfrak g}$. It means that they are all $1$-forms and $\bar \omega=\omega$ for every $\omega \in {\mathfrak g}$.
\begin{definition}
Let $A_k$ be the matrix of a $k$-step formal connection, $0 \le k \le n{-}1$.
The set  of $n-k$
two-dimensional cocycles
$$
c(A_k)=\left(\sum_{r=1}^{n-1}\bar{a}(1,r)\wedge a(r{+}1,k), \dots, \sum_{r=1}^{n-1}\bar{a}(n{-}k,r)\wedge a(r{+}1,n) \right).
$$
is called the related set $c(A_k)$ of cocycles of the formal $k$-step connection  $A_k$.
\end{definition}

\begin{definition}
\label{def-k-step}
The $k$-step  $n$-fold product $\langle a_1,\dots,a_n\rangle_k$ is defined if
there exists at least one $k$-step
formal connection $A_k$ for it with
entries $a_1,\dots,a_n$ at the second diagonal. If it is defined, then
the value $\langle a_1,\dots,a_n\rangle_k$ is the set of all $(n-k)$-tuples of cohomology
classes $\left(\alpha_1,\dots, \alpha_{k+1}\right) \in \underbrace{H^{2}(\A)\times\dots H^{2}(\A)}_{n-k}$ for which
there exists a $k$-step formal connection $A_k$
such that the sequence $c(A_k)$ (or equivalently $\left(-\tau_1,\dots, -\tau_{n-k}\right)$) represents $\left(\alpha_1,\dots, \alpha_{n-k}\right)$.
\end{definition}

$$
\mu(A_k)=\left(\begin{array}{ccccccc}
  0&\dots& 0&        \tau_1 &\;\;*& \dots  & *   \\
  0 & & &0&\tau_2  &\ddots  & \vdots   \\
  \vdots & & & & \ddots &\ddots & *   \\
 \vdots&&        &        &         &0  & \tau_{n-k}     \\
 \vdots&   & &           & &  &   0 \\
   0 &  &  &  \dots         & \dots &  \dots&    \vdots\\
  \end{array}\right), \tau_i \in {\mathfrak  g}^*, d\tau_i=0, i=1,\dots, n{-}k.
$$
Using the standard arguments from \cite{K, BaTa2} one can prove that 
the product  $\langle a_1,\dots,a_n\rangle_k$ depends only on the
cohomology classes of the elements $a_1,\dots,a_n$ as it holds in the classic case.
\begin{example}
\label{1-step}
The $1$-step  product $\langle a_1,\dots,a_n\rangle_1$ is defined, single-valued and equal to the following $(n-1)$-tuple of double products
$$
\langle a_1,\dots, a_n \rangle=\left( a_1 \wedge a_2,  a_2 \wedge a_3, \dots,  a_{n-2} \wedge a_{n-1}, a_{n-1} \wedge a_n \right) \in 
\underbrace{H^{2}({\mathfrak g})\times\dots H^{2}({\mathfrak g})}_{n-1}.
$$
\end{example}
\begin{remark}
If we do not pay attention to the subscript $k$, the $n$-fold  Massey product $\langle a_1,\dots,a_n\rangle$ is now always defined, although we a priori do not know in which space its value will be located.
\end{remark}
By analogy with the classical case, we give the following definition
\begin{definition}
Let a $k$-step  $n$-fold Massey product
$\langle {\alpha}_1,\dots,{\alpha}_n\rangle_k$ in the Lie algebra cohomology
be defined. It is called trivial if it contains the trivial cohomology class of the 
vector space $\underbrace{H^{2}({\mathfrak g})\times \dots \times H^{2}({\mathfrak g})}_{n-k}$
$$
(0,\dots,0) \in\langle {\alpha}_1,\dots,{\alpha}_n\rangle_k.
$$
\end{definition}

\begin{proposition}
\label{triviality}
Let a $k$-step Massey product
$\langle {\alpha}_1,\dots,{\alpha}_n\rangle_k$
be defined. Then all Massey products
$\langle {\alpha}_l,\dots,{\alpha}_q\rangle_s, 1\le l < q \le n, q-l<n-1, s \le k,$
are defined and trivial.
\end{proposition}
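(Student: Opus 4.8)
The plan is to imitate the proof of the classical version of this proposition (stated above for ordinary Massey products): restrict the formal connection to a principal sub-block and exploit the \emph{locality} of the Maurer--Cartan relations. So fix a $k$-step formal connection $A=(a(i,j))_{1\le i\le j\le n}$ for $\langle a_1,\dots,a_n\rangle$ --- recall that in the Lie algebra setting every entry is a $1$-form, so $\bar a(i,j)=a(i,j)$ --- and fix $l,q,s$ with $1\le l<q\le n$, $q-l<n-1$, $s\le k$. Let $A^{(l,q)}$ be the principal sub-block of $A$ on the rows and columns indexed $l,l{+}1,\dots,q{+}1$; its second-diagonal entries are $a_l,\dots,a_q$ and its entries are the $a(i,j)$ with $l\le i\le j\le q$.

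The first step is definedness. The relation $d\,a(i,j)=\sum_{r=i}^{j-1}\bar a(i,r)\wedge a(r{+}1,j)$ attached to the position $(i,j)$ only involves entries $a(i',j')$ with $i\le i'\le j'\le j$; hence for $l\le i\le j\le q$ it is internal to the sub-block $A^{(l,q)}$, and since $A$ is a $k$-step connection it holds whenever $j-i\le k-1$. Equivalently $\mu(A^{(l,q)})$ is simply the restriction of $\mu(A)$ to the sub-block, so it lies in the ideal $T_{q-l+1}^{\,(q-l+1)-s}(\mathcal A)$ for every $s\le k$ (the inclusions (\ref{triangular_mult}) make the bookkeeping automatic). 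Thus $A^{(l,q)}$ is an $s$-step formal connection for $\langle a_l,\dots,a_q\rangle$ and $\langle\alpha_l,\dots,\alpha_q\rangle_s$ is defined; note that this part uses neither $q-l<n-1$ nor $s<k$.

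For triviality the point is to locate the related cocycles of $A^{(l,q)}$ regarded as an $s$-step connection: they occupy the diagonal at level $s$, with $t$-th entry $c(A^{(l,q)})_t=\sum_{r=t}^{t+s-1}\bar a(t,r)\wedge a(r{+}1,t{+}s)$ for $l\le t\le q-s$. If $s\le k-1$ this diagonal still lies in the range $j-i\le k-1$ in which $A$ satisfies the Maurer--Cartan equations, so $c(A^{(l,q)})_t=d\,a(t,t{+}s)$ is exact for every such $t$; hence $A^{(l,q)}$ represents the zero tuple and $(0,\dots,0)\in\langle\alpha_l,\dots,\alpha_q\rangle_s$. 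The genuinely delicate case is $s=k$: here the obstruction diagonal of $A^{(l,q)}$ is the very one carrying the ambient cocycles $c(A)$, so one cannot conclude by restriction alone. This is exactly where $q-l<n-1$ is used, just as in the classical statement: $\langle\alpha_l,\dots,\alpha_q\rangle_k$ is non-vacuous only when $q-l\ge k$, and together with $q-l<n-1$ this forces the tuple $(\alpha_l,\dots,\alpha_q)$ to be \emph{strictly} shorter than $(\alpha_1,\dots,\alpha_n)$; in the extreme case, when it has length $k{+}1$, the $k$-step product is literally the classical Massey product $\langle\alpha_l,\dots,\alpha_q\rangle$ assembled from the same sub-block, and one finishes by applying the classical version of this proposition inside the ambient window. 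I expect precisely this last reduction --- reconciling the $s=k$ case with the classical statement while controlling all the related cocycles simultaneously --- to be the main obstacle; the cases $s\le k-1$ are a routine restriction argument.
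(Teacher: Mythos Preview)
The paper states this proposition without proof, so there is no argument of the authors to compare against; the restriction-to-a-principal-sub-block idea you use is the natural one and is almost certainly what is intended. For $s\le k-1$ your argument is correct: the ambient $k$-step connection~$A$ satisfies the Maurer--Cartan relations one diagonal beyond the obstruction level~$s$, so every related cocycle of $A^{(l,q)}$ at step~$s$ is the coboundary $d\,a(t,t{+}s)$ furnished by the next diagonal of~$A$, and the zero tuple lies in $\langle\alpha_l,\dots,\alpha_q\rangle_s$.

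The case $s=k$ has a genuine gap, and while your diagnosis is accurate your proposed fix is not. At step~$k$ the related cocycles of $A^{(l,q)}$ are literally a sub-tuple of the ambient obstruction cocycles $(\tau_t)$ of~$A$; mere \emph{definedness} of $\langle\alpha_1,\dots,\alpha_n\rangle_k$ gives no control over the classes~$[\tau_t]$. Your appeal to ``the classical version of this proposition inside the ambient window'' does not apply: the classical proposition assumes the \emph{ordinary} Massey product on some ambient tuple is defined, whereas here only the much weaker $k$-step product is, and there is no larger window carrying a classical defining system. Concretely, take $n=4$, $k=2$, and any Lie algebra with $\alpha_i\alpha_{i+1}=0$ for $i=1,2,3$ but $\langle\alpha_1,\alpha_2,\alpha_3\rangle$ nontrivial; then $\langle\alpha_1,\alpha_2,\alpha_3,\alpha_4\rangle_2$ is defined, yet for the sub-tuple $(\alpha_1,\alpha_2,\alpha_3)$ with $q-l=2<3=n-1$ the $2$-step product is the classical triple product and is nontrivial by construction. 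This suggests the intended range is $s<k$ (compare how the paper's subsequent Corollary and the separate proposition handling the passage from ``$k$-step trivial'' to ``$(k{+}1)$-step defined'' are organized); in any case the sketch you give cannot close the $s=k$ case, because as stated it does not hold.
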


\begin{proposition}
Let a $k$-step Massey product
$\langle {\alpha}_1,\dots,{\alpha}_n\rangle_k$
be defined and trivial. Then the $(k+1)$-step Massey product
$\langle {\alpha}_1,\dots,{\alpha}_n\rangle_{k+1}$ is defined.
\end{proposition}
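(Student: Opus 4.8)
The plan is to run the standard one-step obstruction argument for this filtered Maurer--Cartan problem: starting from a $k$-step formal connection whose leading obstruction is a coboundary, cancel that obstruction by correcting the matrix along the $k$-th parallel diagonal, and then verify that the lower-offset part of the curvature is left untouched.

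First I would use the hypothesis in the form it is actually needed. Since $\langle\alpha_1,\dots,\alpha_n\rangle_k$ is defined and trivial, Definition \ref{def-k-step} supplies a matrix $A_k\in T_n(\mathcal A)$ of a $k$-step formal connection, carrying $a_1,\dots,a_n$ on its second diagonal, such that its related set of cocycles $c(A_k)$ --- equivalently, $(-\tau_1,\dots,-\tau_{n-k})$ --- represents $(0,\dots,0)$. By the corollary above, the $\tau_i$, which are the entries of $\mu(A_k)$ on the $k$-th parallel diagonal (all other entries of $\mu(A_k)$ already lying in $T_n^{n-k-1}(\mathcal A)$), are closed; triviality says they are in fact coboundaries, $\tau_i=d\beta_i$, and homogeneity of $\mu$ forces $\deg\beta_i$ to equal the degree of the matrix entry of $A_k$ in that position (in the Lie algebra case the $\tau_i$ are $2$-cocycles, so $\beta_i\in\mathfrak g^{*}$). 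I would then collect the $\beta_i$ into a matrix $E\in T_n^{n-k}(\mathcal A)$ supported only on the $k$-th parallel diagonal, so that $E$ has precisely the $\beta_i$ there and fits the degree pattern of a matrix in $T_n(\mathcal A)$; since $k\ge1$, this diagonal lies strictly above the second diagonal, so $E$ does not meet the entries $a_1,\dots,a_n$.

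Next I would set $A_{k+1}:=A_k-E$ and compute its curvature. Using additivity of the involution,
$$
\mu(A_k-E)=\mu(A_k)-dE+\overline{A_k}\cdot E+\overline{E}\cdot A_k-\overline{E}\cdot E .
$$
Here $\overline{E}\cdot E\in T_n^{n-k}(\mathcal A)\cdot T_n^{n-k}(\mathcal A)\subset T_n^{n-2k}(\mathcal A)\subset T_n^{n-k-1}(\mathcal A)$ since $k\ge1$, while $\overline{A_k}\cdot E$ and $\overline{E}\cdot A_k$ lie in $T_n^{n-1}(\mathcal A)\cdot T_n^{n-k}(\mathcal A)\subset T_n^{n-k-1}(\mathcal A)$ by (\ref{triangular_mult}); the involution does not change which ideal a matrix belongs to. Since $d$ preserves each ideal $T_n^{n-j}(\mathcal A)$, it descends to $T_n^{n-k}(\mathcal A)/T_n^{n-k-1}(\mathcal A)$, on which the class of $dE$ is $d$ applied to the class of $E$, i.e. the diagonal $(d\beta_i)=(\tau_i)$, which is exactly the class of $\mu(A_k)$. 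Hence $dE\equiv\mu(A_k)\pmod{T_n^{n-k-1}(\mathcal A)}$, and the displayed identity collapses to $\mu(A_{k+1})\in T_n^{n-k-1}(\mathcal A)=T_n^{n-(k+1)}(\mathcal A)$. Thus $A_{k+1}$ satisfies (\ref{k-star}) with parameter $k{+}1$ and still carries $a_1,\dots,a_n$ on its second diagonal, so by Definition \ref{def-k-step} the product $\langle\alpha_1,\dots,\alpha_n\rangle_{k+1}$ is defined.

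I expect the argument to be mostly bookkeeping; the one multiplicative estimate (\ref{triangular_mult}) does all the work. The point to watch --- the ``hard'' part, such as it is --- is the degree accounting: one must check that the primitives $\beta_i$ produced by triviality have precisely the degrees demanded by the positions on the $k$-th diagonal, so that $E\in T_n(\mathcal A)$ and $A_{k+1}$ is once more a matrix of the kind admitted in Definition \ref{def-k-step}, and that the class represented by $c(A_k)$ (rather than a sign-twisted variant of it) is the one that vanishes. In the Lie algebra setting these concerns disappear, since every relevant entry is a $1$-form with $\bar\omega=\omega$, and the computation is exactly as displayed.
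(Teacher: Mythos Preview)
Your argument is correct and matches the paper's intent: the proposition is stated there without a formal proof, and the discussion immediately following Example~\ref{1-step} (solving $da(i,i{+}1)=a(i,i)\wedge a(i{+}1,i{+}1)$ and inserting these primitives as the new diagonal of $A_2$) is exactly the $k=1$ instance of your construction $A_{k+1}=A_k-E$. Your curvature expansion together with the filtration estimate (\ref{triangular_mult}) makes the general step precise in the way the paper leaves implicit.
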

Returning to Example \ref{1-step}, we see that a two-step formal connection $A_2$ for the classes $a_1,\dots,a_n,$
 exists if and only if all the cocycle products $a_i \wedge a_{i+1}, i=1,\dots,n{-}1,$ are trivial in the cohomology $H^*({\mathcal A})$. Moreover, if we find $1$-forms
$a(i,i{+}1)$ solving the equations
$$
a(i,i) \wedge a(i{+}1,i{+}1)=da(i,i{+}1), i=1,\dots,n{-}1,
$$
we can explicitly write down the formal connection matrix  $A_2$ with their help and (\ref{def_syst}). New elements $a(i,i{+}1)$ make up the second diagonal of the matrix $A_2$.

\begin{corollary}
Let a standard $n$-fold Massey product
$\langle {\alpha}_1,\dots,{\alpha}_n\rangle$
be defined. Then all $k$-step Massey products
$\langle {\alpha}_1,\dots,{\alpha}_n\rangle_{k}$ with $k < n$ are defined and trivial.
\end{corollary}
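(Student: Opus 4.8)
The plan is to use the single defining system supplied by the hypothesis simultaneously as a $k$-step defining system for every admissible $k$. The point is the nesting of the two-sided ideals introduced before Definition~\ref{def-k-step}: $I_n(\mathcal A)=T_n^0(\mathcal A)\subseteq T_n^1(\mathcal A)\subseteq\dots\subseteq T_n^{n-1}(\mathcal A)=T_n(\mathcal A)$, so that belonging to the smallest ideal $I_n(\mathcal A)$ forces belonging to every larger one. Since $\langle\alpha_1,\dots,\alpha_n\rangle$ is defined, there is a matrix $A\in T_n(\mathcal A)$ carrying cocycle representatives $a_1,\dots,a_n$ of the $\alpha_i$ on its second diagonal, whose entries $a(i,j)$, $(i,j)\ne(1,n)$, satisfy the relations~\eqref{def_syst}; equivalently $\mu(A)=dA-\bar A\cdot A\in I_n(\mathcal A)=T_n^0(\mathcal A)$.

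First I would check that $\langle\alpha_1,\dots,\alpha_n\rangle_k$ is defined for each $k<n$. By the nesting, $\mu(A)\in T_n^0(\mathcal A)\subseteq T_n^{n-k}(\mathcal A)$, so the very same matrix $A$ is a $k$-step formal connection with $a_1,\dots,a_n$ on the second diagonal; by Definition~\ref{def-k-step} this already says that $\langle\alpha_1,\dots,\alpha_n\rangle_k$ is defined, with $A$ as a $k$-step defining system. For triviality I would evaluate the related set of cocycles $c(A)$ of this $k$-step connection. Each of its $n-k$ components is, by construction, an off-diagonal entry $\sum_{r}\bar a(i,r)\wedge a(r+1,j)$ of the matrix $\bar A\cdot A$ sitting on the parallel diagonal singled out in the definition of $c(A_k)$. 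But $A$ is a \emph{full} defining system, and for every pair $(i,j)\ne(1,n)$ the relations~\eqref{def_syst} give $\sum_{r}\bar a(i,r)\wedge a(r+1,j)=d\,a(i,j)$; hence each such component of $c(A)$ is a coboundary and represents $0\in H^2(\mathcal A)$. Consequently $(0,\dots,0)\in\langle\alpha_1,\dots,\alpha_n\rangle_k$, i.e.\ the $k$-step product is trivial.

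A parallel, more structural route is induction on $k$ using the two propositions proved just above the corollary: the base $\langle\alpha_1,\dots,\alpha_n\rangle_1$ is trivial because the $n$-fold product being defined forces each cup product $\alpha_i\cup\alpha_{i+1}$ to vanish, which by Example~\ref{1-step} is precisely triviality of $\langle\alpha_1,\dots,\alpha_n\rangle_1$; and if $\langle\alpha_1,\dots,\alpha_n\rangle_k$ is defined and trivial, then $\langle\alpha_1,\dots,\alpha_n\rangle_{k+1}$ is defined by the next proposition, while its obstruction cocycles are exactly those already solved inside the full defining system $A$, so it is again trivial. The step I expect to be the main obstacle is the index bookkeeping in the second paragraph: one must pin down exactly which parallel diagonal of $\bar A\cdot A$ carries the components of $c(A_k)$ and confirm that for the $k$ in question the indices stay off the corner position $(1,n)$, so that~\eqref{def_syst} really applies. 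The boundary value $k=n-1$, where $c(A_{n-1})$ degenerates to the single cocycle $\sum_{r}\bar a(1,r)\wedge a(r+1,n)$ — essentially the classical related cocycle $c(A)$ — is the case that has to be reconciled with the definitions with the most care.
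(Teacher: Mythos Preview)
Your direct argument is correct and more explicit than the paper's two-line proof. The paper simply observes that the hypothesis amounts to the $(n{-}1)$-step product $\langle\alpha_1,\dots,\alpha_n\rangle_{n-1}$ being defined and then invokes Proposition~\ref{triviality}; you instead unpack what that proposition would have to say here. Concretely, you take the single formal connection $A$ with $\mu(A)\in I_n(\mathcal A)=T_n^0(\mathcal A)$, use the nesting of ideals to exhibit $A$ itself as a $k$-step connection for every $k$, and then read off from the defining-system relations~\eqref{def_syst} that each component of the $k$-step obstruction $c(A_k)$ is literally some $d\,a(i,j)$ and hence exact. This is the natural hands-on proof and makes visible that one and the same matrix $A$ witnesses both definedness and triviality at every level; the paper's citation of Proposition~\ref{triviality} is really a pointer to this same mechanism.

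Your caution about $k=n-1$ is warranted and reflects an off-by-one ambiguity in the section's indexing rather than a gap in your reasoning. Under one reading the $(n{-}1)$-step obstruction sits at the corner position $(1,n)$ and coincides with the classical $c(A)$, in which case ``trivial'' for $k=n-1$ would amount to triviality of the standard product itself---not part of the hypothesis---and the corollary should be understood for $k\le n-2$, which your argument covers cleanly. Under the other reading the $(n{-}1)$-step obstruction lies on the sub-corner diagonal, where your argument applies verbatim since those entries of $\mu(A)$ vanish by $\mu(A)\in I_n(\mathcal A)$. Either way, for every $k$ strictly below the top level your index bookkeeping goes through exactly as you describe, and the ``parallel inductive route'' you sketch in the last paragraph is simply a restatement of the same idea.
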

\begin{proof}
It means that the $(n-1)$-step Massey product $\langle {\alpha}_1,\dots,{\alpha}_n\rangle_{n-1}$ is defined. Hence the statement follows from
Proposition \ref{triviality}.
\end{proof}

To conclude this section, we want to note that the technique of $k$-step Massey products constructed in it is not something completely unknown. Anyone who searched for (constructed) the defining systems of Massey products came across this recursive procedure at one level or another. But  it seems however useful to formalize it in this specific way thinking on the applications to representations theory of nilpotent Lie algebras.


\section{Non-trivial Massey products in Lie algebra cohomology}
\label{nontrivSection}
Buchstaber and Shokurov discovered \cite{BuSho} that the tensor product 
$S \otimes {\mathbb R}$ of the
Landweber-Novikov algebra $S$ (the complex cobordism theory) 
by real numbers $S \otimes {\mathbb R}$ is isomorphic to the
universal enveloping algebra $U(W^+)$ of the Lie algebra $W^+$ of
polynomial vector fields on the real line ${\mathbb R}^1$ with
vanishing non-positive Fourier coefficients. $W^+$ is a maximal
pro-nilpotent subalgebra of the Witt algebra.

The Witt algebra $W$ is
spanned by differential operators on the real line ${\mathbb R}^1$
with a fixed coordinate $x$
$$
e_i=x^{i+1}\frac{d}{dx}, \; i \in {\mathbb Z}, \quad
[e_i,e_j]= (j-i)e_{i{+}j}, \; \forall \;i,j \in {\mathbb Z}.
$$

\begin{theorem}[Goncharova~\cite{G}]
The Betti numbers ${\rm dim} H^q(W^+)=2$, for every
$q \ge 1$, more precisely
$$ {\rm dim} H_k^q(W^+)=
\left\{\begin{array}{r}
   1, \hspace{0.6em}{\rm if}~k=\frac{3q^2 \pm q}{2}, \\
   0,\hspace{1.76em}{\rm otherwise.}\\
   \end{array} \right . \hspace{3.3em} $$
\end{theorem}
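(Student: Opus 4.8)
The statement to prove is Goncharova's theorem on the Betti numbers of the positive Witt algebra $W^+$, with its characteristic feature that $\dim H^q(W^+) = 2$ for every $q \ge 1$, with the two surviving cohomology classes concentrated in the \emph{grading} degrees $k = (3q^2 \pm q)/2$ (the pentagonal numbers). The essential point is that $W^+$ carries an internal grading (by the weight, i.e. the eigenvalue of $\mathrm{ad}_{e_0}$-type operator: $e_i$ has weight $i$), and the Chevalley--Eilenberg complex $\Lambda^*(W^+)^*$ splits as a direct sum over the total weight. So the plan is to fix $q$ and compute, for each weight $k$, the weight-$k$ component of $H^q$, reducing an infinite-dimensional problem to a family of finite-dimensional linear-algebra problems indexed by $(q,k)$.

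First I would set up the bigraded complex: a basis of $\Lambda^q (W^+)^*$ in weight $k$ is indexed by strictly increasing tuples $1 \le i_1 < i_2 < \dots < i_q$ with $i_1 + \dots + i_q = k$, i.e. by partitions of $k$ into $q$ distinct positive parts; write $e^*_{i_1}\wedge\dots\wedge e^*_{i_q}$ for the dual basis element. The differential $d$ is dual to the bracket $[e_i,e_j] = (j-i)e_{i+j}$, so $d(e^*_m) = \sum_{i+j=m,\, i<j}(j-i)\, e^*_i \wedge e^*_j$ and is extended as a derivation; it preserves both $q$ (raising it by one) and $k$. The hard computational heart is to show that for each $k$ this finite complex $\bigoplus_q \Lambda^q_k(W^+)^*$ has homology of total dimension at most (hence exactly) $2$, and to pin down the two weights. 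The two classes are easy to exhibit: the cocycle $e^*_1 \wedge e^*_2 \wedge \dots$ assembled appropriately gives the class in degree $q$, weight $\frac{3q^2-q}{2} = 1 + 2 + \dots$ wait — more precisely the "lower" pentagonal number is realized by the tuple $(1,2,4,5,7,8,\dots)$ and the "upper" one by $(2,3,5,6,8,9,\dots)$; one checks directly that these particular wedge monomials are cocycles and are not coboundaries, which handles the lower bound $\dim H^q \ge 2$.

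The genuinely hard part — the one that makes this a celebrated theorem rather than an exercise — is the \emph{upper} bound: showing nothing else survives. I see two routes. The first, closer to Goncharova's original, is a delicate direct analysis: introduce an auxiliary filtration or a clever ordering on the partition-monomials, construct an explicit contracting homotopy (or a "dominant term" argument showing $d$ is "as injective/surjective as possible") outside the two exceptional weights, and control the Euler characteristics weight by weight using the pentagonal number theorem $\prod_{i\ge1}(1-t^i) = \sum_n (-1)^n t^{(3n^2\pm n)/2}$ — this identity is exactly the generating function $\sum_{q,k} (-1)^q \dim \Lambda^q_k t^k$, so the alternating sum already forces the answer once one knows homology is small. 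The second route, which I would actually prefer to sketch, is the representation-theoretic proof (Feigin--Fuks--Retakh, and the semi-infinite / BGG-resolution circle of ideas): realize $W^+$-cohomology via a resolution coming from Verma-type modules over the full Virasoro or Witt algebra, where the $(3q^2\pm q)/2$ pattern emerges from the weights in a BGG-type resolution. Either way, after the reduction to fixed weight the remaining work is finite-dimensional linear algebra plus the combinatorial bookkeeping of partitions into distinct parts; I would present the cocycle representatives explicitly, invoke the pentagonal identity for the Euler characteristic, and then carry out (or cite) the homotopy/spectral-sequence argument that collapses everything else.
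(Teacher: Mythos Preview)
The paper does not prove this theorem at all: it is stated with attribution to Goncharova~\cite{G} and used as input for the subsequent discussion of Massey products, with no proof or sketch given in the text. So there is nothing in the paper to compare your proposal against.

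That said, your sketch contains a substantive error worth flagging. You claim that the lower bound $\dim H^q \ge 2$ is ``easy'': one writes down explicit wedge monomials supported on tuples like $(1,2,4,5,7,8,\dots)$ or $(2,3,5,6,8,9,\dots)$ and checks directly that they are cocycles. This is not correct. First, those tuples do not have the right weights: for $q=3$ the tuple $(1,2,4)$ has weight $7$, not the lower pentagonal number $12$; and $1+2+\dots+q=q(q+1)/2$, not $(3q^2-q)/2$. Second, and more seriously, the Goncharova cocycles are \emph{not} monomials in general. The paper notes explicitly that $g^2_-=e^2\wedge e^3$ but $g^2_+=e^2\wedge e^5-3e^3\wedge e^4$, and then remarks that ``explicit formulas for all Goncharova's cocycles $g^k_{\pm}$ in terms of exterior forms from $\Lambda^*(W^+)$ are still unknown.'' So the lower bound is not obtained by exhibiting simple representatives; it falls out of the same hard computation as the upper bound. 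Your overall architecture (bigrading, Euler--pentagonal identity, then a filtration/homotopy argument or the representation-theoretic route via Verma modules) is the right shape for Goncharova's proof and its later reworkings, but the ``easy half'' you propose does not exist.
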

We will denote  by $g^q_{\pm}$ a basis in the
spaces $H^q_{\frac{3q^2\pm q}{2}}(W^+)$. The numbers $\frac{3q^2
\pm q}{2}=e_{\pm}(q)$ are so called Euler pentagonal numbers. It is easy to verify that the sum 
$$
e_{\pm}(q)+e_{\pm}(p) \neq e_{\pm}(p+q), p, q \in {\mathbb N}.
$$ 
Hence the
cohomology algebra $H^*(W^+)$ has a trivial multiplication.
Buchstaber conjectured that the algebra $H^*(W^+)$ is generated
with respect to some non-trivial Massey products by its  first
cohomology $H^1(W^+)$.

Feigin, Fuchs and Retakh \cite {FeFuRe} represented the basic
homogeneous cohomology classes from $H^*(W^+)$ as  Massey products. 

\begin{theorem}[Feigin, Fuchs, Retakh~\cite{FeFuRe}]
\label{FeigFu}
For any $q \ge 2$ we have inclusions
$$
g^q_- \in \langle g^{q{-}1}_+,\underbrace{e^1,\dots,e^1}_{2q-1}
\rangle,
\quad
g^q_+ \in \langle g^{q{-}1}_+,\underbrace{e^1,\dots,e^1}_{3q-1}
\rangle.
$$
\end{theorem}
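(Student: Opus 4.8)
The plan is to produce explicit defining systems (formal connections) for the two Massey products and then show that the related cocycle of each is cohomologous to the relevant Goncharova generator. The starting data are the one-cocycle $e^1$ dual to $e_1$ and the generator $g^{q-1}_+$, which we know by Goncharova's theorem lives in bidegree $e_+(q-1) = \tfrac{3(q-1)^2+(q-1)}{2}$ and has cohomological degree $q-1$. The first computational task is to record the degrees: $\langle g^{q-1}_+, e^1, \dots, e^1\rangle$ with $2q-1$ copies of $e^1$ should land in cohomological degree $(q-1) + (2q-1) - (n-2)$-correction, which one checks against $q$, and in internal (weight) degree $e_+(q-1) + (2q-1)\cdot 1$; the claim is that this equals $e_-(q) = \tfrac{3q^2-q}{2}$, and similarly $e_+(q-1) + (3q-1) = e_+(q) = \tfrac{3q^2+q}{2}$. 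These two pentagonal-number identities are elementary but they are the arithmetic backbone: they guarantee that the only cohomology class the Massey product can possibly equal is $g^q_-$ (respectively $g^q_+$), because Goncharova's theorem says $H^*(W^+)$ is one-dimensional in each (cohomological, weight) bidegree where it is nonzero.

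Next I would construct the defining system. Because $H^*(W^+)$ has trivial multiplication, all the required products of consecutive entries vanish, so lower Massey products in the collection are candidates for triviality; but one still has to exhibit actual primitives. The natural approach, following Feigin--Fuchs--Retakh, is to work inside $\Lambda^*(W^+{}^*)$ with the explicit dual basis $e^i$ to $e_i$ and the coboundary formula dual to $[e_i,e_j]=(j-i)e_{i+j}$, namely $d e^k = \sum_{i+j=k,\, i<j}(j-i)\, e^i\wedge e^j$. One then builds the entries $a(i,j)$ of the upper-triangular matrix recursively along parallel diagonals, solving $d\,a(i,j) = \sum_{r} \bar a(i,r)\wedge a(r+1,j)$ at each stage; the $k$-step formalism of Section 3 organizes exactly this recursion, so one proves the existence of the full defining system by showing each successive obstruction (the related set $c(A_k)$) is trivial. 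The key structural fact making this work is that at every intermediate bidegree the cohomology $H^*(W^+)$ either vanishes (so the obstruction cocycle is automatically exact) or is spanned by a known Goncharova class whose weight does not match, again by the pentagonal-number non-additivity $e_\pm(p)+e_\pm(q)\neq e_\pm(p+q)$ quoted in the excerpt. Thus all intermediate obstructions vanish on degree grounds, the defining system exists, and the final related cocycle $c(A)$ sits in the bidegree of $g^q_\mp$.

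The last step is to check that $c(A)$ is not exact, i.e. that the Massey product is the nonzero class $g^q_\mp$ rather than $0$. This is the genuinely delicate point and I expect it to be the main obstacle: triviality of all sub-products and correctness of the target bidegree only show $\langle\,\cdots\rangle \subseteq \{0, g^q_\mp\}$ (since the ambient space is one-dimensional), and one must rule out $0$. Here one cannot argue purely formally; one needs either an explicit evaluation of $c(A)$ against a homology class — e.g. pairing with the Goncharova cycle in $H_q$, whose explicit form in terms of the $e_i$ is known — or an inductive argument on $q$ that propagates non-triviality from the base case. Feigin--Fuchs--Retakh handle this via the singular cohomology / Fock-module interpretation of $H^*(W^+)$, where the Massey product corresponds to a non-split extension that is detected representation-theoretically; absent that machinery, the honest route is a direct (if lengthy) computation that the coefficient of a chosen basis monomial in $c(A)$ is nonzero and cannot be killed by any coboundary, using the fact that $d$ strictly raises weight-per-generator in a controlled way. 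Once non-triviality is established, the inclusion $g^q_\mp \in \langle\,\cdots\,\rangle$ follows immediately, completing the proof.
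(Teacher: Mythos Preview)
Your outline gets the degree arithmetic right: the identities $e_+(q{-}1)+(2q{-}1)=e_-(q)$ and $e_+(q{-}1)+(3q{-}1)=e_+(q)$ are precisely why the Massey products land in the correct bidegree. But there are two genuine gaps.

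The minor one is in your existence argument. You claim that at every intermediate step the obstruction lands in a bidegree where $H^*(W^+)$ vanishes (or the weight does not match). This is true for the first-row obstructions, which sit in $H^q_{e_+(q{-}1)+s}$ with $1\le s\le 2q{-}2$ and hence avoid both pentagonal weights. It fails, however, for the sub-products $\langle e^1,\dots,e^1\rangle$ of length $s$: these land in $H^2_s(W^+)$, which is nonzero for $s=5$ and $s=7$, and those lengths do occur once $q\ge 3$. The fix is easy (since $e^1\wedge e^1=0$ one may take all primitives in rows $\ge 2$ to be zero, or more generally a scalar multiple of $e^2$), but it is not a pure degree-count; indeed Feigin--Fuchs--Retakh deliberately keep this freedom as a parameter $\alpha$, placing $\alpha e^2$ on the third diagonal of the defining system.

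The substantive gap is the nontriviality step. You correctly flag it as the crux but do not supply an argument, and the two routes you propose are not the ones that work. A direct evaluation of $c(A)$ or a pairing with an explicit Goncharova cycle is not feasible: the paper emphasises that closed formulas for the cocycles $g^q_\pm$ and for the first-row entries $\Omega_i$ of the defining system are \emph{unknown}. The actual Feigin--Fuchs--Retakh argument is of a different nature. They interpret the $\alpha$-parametrised defining system as a graded $W^+$-module $V=V_\alpha$ (basis $f_j$, action $e_1 f_j=f_{j+1}$, $e_2 f_j=\alpha f_{j+2}$, $e_k f_j=0$ for $k\ge 3$) and show that $[c(A)]=0$ is equivalent to the vanishing of a differential $d_n$ in a spectral sequence converging to $H^*(W^+,V_\alpha)$. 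They then determine, by representation-theoretic means, the finite set of values of $\alpha$ for which this differential vanishes (for instance $d_{2k}$ is trivial exactly when $\alpha\in\{1/6,\dots,1/(6(k{-}1)^2)\}$). For any $\alpha$ outside that set the class $[c(A)]$ is nonzero and hence a nonzero multiple of $g^q_{\mp}$, giving the inclusion. Note also that, as the paper records in the corollary immediately following the theorem, these Massey products are \emph{trivial}: they contain $0$ as well. So the task is not to rule out $0$ for a fixed defining system, but to exhibit \emph{some} defining system whose related cocycle is cohomologically nonzero; the parameter $\alpha$ is exactly what makes this possible.
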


Feigin, Fuchs and Retakh proposed to consider the defining system that is equivalent to the following matrix of formal connection
$$
\label{FeFuReCon} 
A=\left(\begin{array}{ccccccc}
  0      &g^{k{-}1}_+ &   \Omega_1 &\Omega_2 & \dots   &  \Omega_{n{-}1} &*  \\
    0& 0 &  e^1 & \alpha e^2 & 0  & \dots  &0 \\
  0&0  & 0 & \ddots & \ddots      & \ddots &\vdots   \\
       0& 0      & 0      & \ddots & \ddots          & \alpha e^2 &0   \\
  \vdots      & 0      &  \ddots    & \ddots &  \ddots & e^1 & \alpha e^2 \\
 0 & \vdots      & \ddots      & \ddots      & \ddots & 0          & e^1  \\
  0 & 0      & 0      & 0      & \dots & 0  &  0
  \end{array}\right),
$$
with homogeneous forms $\Omega_i \in
\Lambda^{k+i-1}_{\frac{1}{2}(3(k{-}1)^2{+}(k{-}1))+i}(W^+)$ and
parameter $\alpha \in {\mathbb K}$.  The corresponding cocycle $c(A)
\in
\Lambda^{k+1}_{\frac{1}{2}(3(k{-}1)^2{+}(k{-}1)){+}n{-}1}(W^+)$
can be non-trivial only if $n=2k$ or $n=3k$ that corresponds to
$H^k_{\frac{1}{2}(3k^2\pm k)}(W^+)$.  Feigin, Fuchs and Retakh have shown that the triviality of the cocycle $c(A)$ is
equivalent to the triviality of the differential $d_n$ of some
spectral sequence $E_r^{p,q}$ converging to the cohomology $H^*(W^+,V)$ with coefficients in the graded $W^+$-module $V$. The infinite dimensional module $V$ depends on the parameter $\alpha$ and can be defined by its basis $f_1,f_2,\dots, f_n,\dots$ and relations
$$
e_1f_j=f_{j+1}, \; e_2f_j=\alpha f_{j+2}, j \ge 1; \quad e_kf_j=0, k,j \in {\mathbb N}.
$$

Feigin, Fuchs and Retakh established that

1) the differential $d_{2k}$ is trivial if and only if $\alpha \in
\{\frac{1}{6},\frac{1}{24},\dots,\frac{1}{6(k{-}1)^2}\}$;

2) the differential $d_{3k}$ is defined and trivial if and only if:

a) $\alpha \in
\{\frac{1}{6},\frac{1}{54},\dots,\frac{1}{6(k{-}3)^2},\frac{1}{6(k{-}1)^2}\}$
in the case of even $k$;

b) $\alpha \in
\{\frac{1}{24},\frac{1}{96},\dots,\frac{1}{6(k{-}3)^2},\frac{1}{6(k{-}1)^2}\}$
if $k$ is odd.
\begin{corollary}
All Massey products from the Theorem \ref{FeigFu} are trivial.
\end{corollary}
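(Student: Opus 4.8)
The plan is to exploit the one-parameter family of defining systems $A=A(\alpha)$ exhibited above by Feigin, Fuchs and Retakh, and to locate a value of $\alpha$ for which the related cocycle $c(A(\alpha))$ is cohomologous to zero. By Theorem~\ref{FeigFu} each of the Massey products $\langle g^{k-1}_+,\underbrace{e^1,\dots,e^1}_{2k-1}\rangle$ and $\langle g^{k-1}_+,\underbrace{e^1,\dots,e^1}_{3k-1}\rangle$ (for $k\ge 2$) is already known to be defined, since it contains $g^k_-$, respectively $g^k_+$. Hence, to prove that each of these products is trivial, it suffices to produce in each case a single defining system whose related cocycle represents $0$: a defined Massey product that contains $0$ is trivial by definition.

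First I would pin down where $c(A(\alpha))$ lives. Because $W^+$ is graded and the differential on $\Lambda^*(W^+)$ preserves the weight, the related cocycle is weight-homogeneous; for the product of $g^{k-1}_+$ with $m$ copies of $e^1$ its weight is $e_+(k{-}1)+m$, which equals $e_-(k)$ when $m=2k-1$ and $e_+(k)$ when $m=3k-1$. By Goncharova's theorem the cohomology of $W^+$ in the relevant bidegree is one-dimensional, spanned by $g^k_-$, respectively $g^k_+$, so $[c(A(\alpha))]=\lambda_\pm(\alpha)\,g^k_\pm$ for a scalar $\lambda_\pm(\alpha)$, and each product is trivial as soon as $\lambda_\pm(\alpha)$ vanishes for some admissible value of the parameter.

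The essential input is then the Feigin--Fuchs--Retakh identification recalled just above: $\lambda_\pm(\alpha)=0$ if and only if the differential $d_n$ ($n=2k$, resp.\ $n=3k$) of the spectral sequence converging to $H^*(W^+,V)$ is trivial, and this happens precisely on the explicit parameter sets of items~1), 2a) and~2b). It remains only to observe that each of these sets is nonempty --- in fact $\alpha=\frac{1}{6(k-1)^2}$ is the last value listed in every one of them, for $n=2k$ and for $n=3k$ with $k$ either even or odd. Choosing $\alpha_0=\frac{1}{6(k-1)^2}$, the matrix $A(\alpha_0)$ is a defining system for the corresponding Massey product with $c(A(\alpha_0))$ cohomologous to zero, so $0$ belongs to each product and the corollary follows.

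The one delicate point, and the step to be careful about, is to ensure that for $\alpha_0$ the whole matrix $A(\alpha_0)$ --- not merely a truncation of it --- is a genuine formal connection, i.e.\ that all Maurer--Cartan equations~(\ref{def_syst}) with $(i,j)\neq(1,n)$ are solvable; this is exactly the assertion that $d_{3k}$ is \emph{defined} at $\alpha_0$ (and is automatic for $d_{2k}$), which is part of the quoted Feigin--Fuchs--Retakh results. No further computation is needed: the content of the corollary is precisely the remark that the admissible parameter set is never empty, so that alongside the generic defining systems realising $g^k_\pm$ there is always one realising $0$.
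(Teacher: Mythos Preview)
Your proposal is correct and is exactly the argument the paper has in mind: the corollary is stated immediately after the Feigin--Fuchs--Retakh parameter lists 1), 2a), 2b) with no further proof, the intended observation being precisely that those lists are nonempty (indeed $\alpha=\tfrac{1}{6(k-1)^2}$ lies in each), so some $A(\alpha)$ is a genuine defining system with $[c(A(\alpha))]=0$. You have also correctly identified and handled the one subtle point, namely that for $n=3k$ one needs the statement ``$d_{3k}$ is \emph{defined}'' in 2) to guarantee that the full Maurer--Cartan system is solvable at the chosen parameter value.
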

The main
technical problem in the proof of Theorem \ref{FeigFu} is  to find explicit
formulas for the entries $\Omega_i, i=1,\dots, n-1$.
One can verify directly that the cocycles $g^2_-=e^2{\wedge}e^3$ and
$g^2_+=e^2{\wedge}e^5-e^3{\wedge}^4$ span the homogeneous
subspaces $H^2_5(W^+)$ and $H^2_7(W^+)$ respectively. But  explicit formulas for all
Goncharova's cocycles $g^k_{\pm}$ in terms of exterior forms from
$\Lambda^*(W^+)$ are still unknown. Fuchs, Feigin and Retakh proposed \cite{FeFuRe}
an elegant  way how to establish
non-triviality of differentials for some values of the
parameter $\alpha$ of the spectral sequence $E^{p,q}_r$ that converge to the cohomology $H^*(W^+,V)$.

Artel'nykh \cite{Artel} represented a
part of basic cocycles in $H^*(W^+)$ by means of non-trivial
Massey products, but his very brief article does not contain any sketch of the proof.

\begin{theorem}[Millionshchikov~\cite{Mill2}]
\label{main_Buch_conj}
The cohomology $H^*(W^+)$ is generated by two
elements $e^1, e^2 \in H^1(W^+)$ by means of two series of
non-trivial Massey products. More precisely the recurrent
procedure is organized as follows

1) elements $e^1$ and $e^2$ span $H^1(W+)$;

2) the triple Massey product $\langle e^1,e^2,e^2 \rangle$
 is single-valued and determines non-trivial cohomology
class $g_-^{2}{=}\langle e^1,e^2,e^2 \rangle \in H^{2}_{5}(W^+)$;

3) the $5$-fold product $\langle e^1,e^2,e^1,e^1,e^2 \rangle$ is
non-trivial and it is an affine line $\{g_+^{2}+t g_-^{2}, t \in
{\mathbb K}\}$ on the plane  $H^{2}(W^+)$, where $g_+^{2}$ denotes
some generator from $H^{2}_{7}(W^+)$. Denote by $\tilde g_+^{2}$  an arbitrary
element in $\langle e^1,e^2,e^1,e^1,e^2 \rangle$.

Let us suppose that we have already constructed some basis $g_-^k,
\tilde g_+^k$ of $H^k(W^+), k {\ge} 2,$ such that the cohomology
class $g_-^k$ spans the  subspace
$H^k_{\frac{3k^2-k}{2}}(W^+)$. Then

4) the $(2k+1)$-fold Massey product
$$ \langle
\underbrace{e^1,\dots,e^1}_m,e^2,
\underbrace{e^1,\dots,e^1}_n,\tilde g_+^k\rangle=g_-^{k+1}, \; m+n=2k-1,$$
 is single-valued and spans the subspace
$H^{k{+}1}_{\frac{1}{2}(3(k{+}1)^2{-}(k{+}1))}(W^+)$.

5) the $(3k+2)$-fold product
$$ \langle
\underbrace{e^1,\dots,e^1}_{k},e^2,\underbrace{e^1,\dots,e^1}_{2k},\tilde
g^k_+ \rangle$$
 is non-trivial and it is an affine line on the two-dimensional plane $H^{k+1}(W^+)$ parallel to the
 one-dimensional subspace
 $H^{k{+}1}_{\frac{1}{2}(3(k{+}1)^2{-}(k{+}1))}(W^+)$.
One can take an arbitrary element  in $\langle
\underbrace{e^1,\dots,e^1}_{k},e^2,\underbrace{e^1,\dots,e^1}_{2k},\tilde
g^k_+ \rangle$ as the second basic element $\tilde g_+^{k+1}$ of the subspace $H^{k+1}(W^+)$.
\end{theorem}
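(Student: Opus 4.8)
The plan is to argue by induction on $k$, normalising $g^1_-:=e^1$ and $\tilde g^1_+:=e^2$ so that items 2) and 3) become the case $k=1$ of items 4) and 5), and to exploit throughout the weight grading of $\Lambda^*(W^+)$ together with Goncharova's theorem~\cite{G}. Two structural facts carry the argument. First, $H^q_w(W^+)$ is one-dimensional when $w=\frac{3q^2\pm q}{2}$ and zero otherwise, so every Massey product of weight-homogeneous classes --- and all classes here are homogeneous --- takes its value and indeterminacy in spaces that are zero or lines; since no sum $e_{\pm}(p)+e_{\pm}(q)$ of pentagonal numbers is again pentagonal, all products in $H^{>0}(W^+)$ vanish, so every two-fold product is $0$ and every defined triple product is single-valued. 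Second, the only closed $1$-forms in $\Lambda^*(W^+)$ are the scalar multiples of $e^1$ and $e^2$, so in a defining system made of $1$-forms each auxiliary entry is free up to an element of $\langle e^1,e^2\rangle$; the indeterminacy of a product is precisely the span of those perturbations of $c(A)$ whose weight falls in a nonzero bidegree of $H^*(W^+)$, and tracking this against the sparse list of such bidegrees is the heart of the bookkeeping.

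For the base case $k=1$ one writes the defining systems down. In $\langle e^1,e^2,e^2\rangle$ the auxiliary $1$-forms are forced --- $dg=e^2\wedge e^2=0$ gives $g=0$, and $df=e^1\wedge e^2=-d\,e^3$ with $H^1_3(W^+)=0$ gives $f=-e^3$ --- so $c(A)=\bar f\wedge e^2=e^2\wedge e^3=g^2_-\neq0$, and since $e^1\cdot H^1(W^+)+H^1(W^+)\cdot e^2=0$ the product is the single nonzero class $g^2_-$. For $\langle e^1,e^2,e^1,e^1,e^2\rangle$ one builds a defining system in the same way --- the sub-obstructions vanish for weight reasons except for a handful (such as $\langle e^1,e^2,e^1,e^1\rangle$ at weight $5$) handled by an explicit continuation --- computes that $c(A)$ is nonzero in $H^2_7(W^+)$, and observes that a weight-lowering perturbation of an auxiliary entry by $e^1$ or $e^2$ contributes a nonzero multiple of $g^2_-\in H^2_5(W^+)$ and nothing else: the product is the affine line $\{g^2_++t\,g^2_-\}$, which misses $0$.

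For the inductive step, assume 4) and 5) at all lower levels, so that $g^k_-$ spans $H^k_{(3k^2-k)/2}(W^+)$ and some $\tilde g^k_+\in H^k_{(3k^2+k)/2}(W^+)$ is given. For the $(2k+1)$-fold product of item 4), with any split $m+n=2k-1$, a degree count gives result degree $k+1$ and a weight count gives weight $\frac{3(k+1)^2-(k+1)}{2}$, so once the bracket is defined its value lies in the line $H^{k+1}_{(3(k+1)^2-(k+1))/2}(W^+)=\langle g^{k+1}_-\rangle$. The condition necessary for definedness, that every proper contiguous subproduct be defined and trivial, holds for weight reasons except in finitely many cases: every proper suffix ending in $\tilde g^k_+$ has degree $k+1$ but weight strictly below $\frac{3(k+1)^2-(k+1)}{2}$, the smaller of the only two weights in which $H^{k+1}(W^+)$ is nonzero, while the other proper subproducts are degree-$2$ products of $e^1$'s and at most one $e^2$, hence zero unless their weight is $5$ or $7$; those exceptional ones are trivialised by explicit continuations, using the lower-level defining systems and the trivial continuation $a(i,j)=0$ of all-$e^1$ products. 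Assembling the local continuations into a global defining system is where the representation of $W^+$ on the module $V_\alpha$ enters, and it is completed by a suitable choice of $\alpha$ (below). The $(3k+2)$-fold product of item 5) is handled in parallel --- degree $k+1$, weight $\frac{3(k+1)^2+(k+1)}{2}$, value in $\langle g^{k+1}_+\rangle$ --- and one takes any of its elements as $\tilde g^{k+1}_+$, the subsequent arguments being insensitive to the choice since two choices differ by a multiple of $g^{k+1}_-$.

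The crux --- at the base and at every step --- is the nonvanishing of the homogeneous obstruction cocycle: that $c(A)$, for a weight-homogeneous defining system, is not exact, a nonzero multiple of $g^{k+1}_-$ in item 4) and of $g^{k+1}_+$ in item 5). Because the Goncharova cocycles have no known closed form in $\Lambda^*(W^+)$, this is not a direct computation; the plan is the Feigin--Fuchs--Retakh criterion of Theorem~\ref{FeigFu}~\cite{FeFuRe}: the (non)exactness of $c(A)$ is equivalent to the (non)triviality of a differential in the spectral sequence converging to $H^*(W^+,V_\alpha)$, where $V_\alpha$ has basis $f_1,f_2,\dots$ with $e_1f_j=f_{j+1}$, $e_2f_j=\alpha f_{j+2}$ and all other $e_if_j=0$, and one fixes $\alpha$ in the explicit admissible list of Feigin--Fuchs--Retakh so that all obstructions up to the terminal one vanish while the terminal one does not. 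Granting this, the shape of the product follows from a last weight count: a weight-preserving perturbation of an auxiliary entry changes $c(A)$ only by a product of cohomology classes, hence by $0$, while a weight-lowering one changes it by a class of strictly smaller weight, which by Goncharova's theorem is zero unless that weight equals $\frac{3(k+1)^2-(k+1)}{2}$. Thus item 4) has no indeterminacy and its value is the single class $g^{k+1}_-$, whereas item 5) has indeterminacy exactly $\langle g^{k+1}_-\rangle$ and its value is the affine line through a nonzero class $c_0\in\langle g^{k+1}_+\rangle$ parallel to $\langle g^{k+1}_-\rangle$, hence disjoint from $0$. That the indeterminacy never acquires a $g^{k+1}_+$-component --- the reason these products are strictly, not merely partially, defined --- is exactly the gain from replacing a consecutive pair $e^1,e^1$ inside the Feigin--Fuchs--Retakh brackets by a single $e^2$, a substitution leaving both the degree and the weight of the value unchanged.
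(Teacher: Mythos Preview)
Your scaffold---induction on $k$, systematic use of the weight grading against Goncharova's sparse table of nonzero bidegrees, the explicit base case for $H^2$, and the observation that replacing a pair $e^1,e^1$ by a single $e^2$ shortens the bracket and hence shrinks the indeterminacy---is sound and matches the paper's framing. The genuine gap is at the step you yourself flag with ``Granting this'': the nonvanishing of the homogeneous obstruction class $[c(A)]$.

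Your plan for that step is to invoke the Feigin--Fuchs--Retakh module $V_\alpha$ and ``fix $\alpha$ in the explicit admissible list of Feigin--Fuchs--Retakh so that all obstructions up to the terminal one vanish while the terminal one does not.'' This does not work. First, $V_\alpha$ is the module attached to the FFR formal connection (the one with $e^1$ on the subdiagonal and $\alpha e^2$ on the next); the defining systems in Theorem~\ref{main_Buch_conj} carry an $e^2$ and a $\tilde g^k_+$ in prescribed slots of the subdiagonal itself, so the associated $W^+$-module is a different object and the FFR spectral sequence for $V_\alpha$ does not compute its obstruction. Second, the paper records immediately after Theorem~\ref{FeigFu} that \emph{all} Massey products in the FFR scheme are trivial: the admissible $\alpha$ are precisely those for which the relevant differential \emph{does} vanish, and the lists are nested so that there is no $\alpha$ making $d_{2k'}$ trivial for $k'<k$ while $d_{2k}$ is not. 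So ``choosing $\alpha$'' cannot separate the terminal obstruction from the earlier ones.

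What the argument in \cite{Mill2}, summarised in the paper, actually supplies at this point is new relative to \cite{FeFuRe}: a \emph{special graded thread $W^+$-module} $V_{gr}$, uniquely determined by its graded constraints, whose rigidity (not a parameter choice) is what forces nontriviality; explicit formulas for singular vectors of Verma modules over the Virasoro algebra, used to identify the spectral-sequence differentials; and an explicit computation of $H^*(W^+,V_{gr})$ via the Feigin--Fuchs--Rocha-Caridi--Wallach resolution, from which the required nonvanishing is read off. None of these ingredients appears in your sketch, and without them the crux of the theorem---that these products are \emph{non}-trivial, which is exactly what distinguishes Theorem~\ref{main_Buch_conj} from Theorem~\ref{FeigFu}---remains unproved.
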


The proof of Theorem \ref{main_Buch_conj}  is based on the technique developed in \cite{FeFuRe}, to which a set of fundamental additions was proposed \cite{Mill2}. The first was the construction of the special graded thread $W^+$-module $V_{gr}$, which is uniquely determined by its main properites. The non-triviality of the corresponding Massey products largely follows from such a rigidity. To calculate the differentials of the spectral sequence associated with the constructed Massey products, explicit formulas for the special vectors of Verma modules over the Virasoro algebra were applied. Finally the nontriviality of the corresponding differentials $d_k$ was established by explicit calculus of the cohomology $H^*(W^+, V_{gr})$ using the so-called Feigin-Fuchs-Rocha-Caridi-Wallach resolution \cite{Mill2}.

As an example, we will reproduce the simplest part of the proof \cite{Mill2} related to the second cohomology  $H^*(W^+)$, but which nevertheless well illustrates the basics of the proof technique.

For an arbitrary formal connection $A$ that corresponds to the
product $\langle e^2,e^2,e^1\rangle$ the cocycle
$c(A)=-e^2 \wedge e^3+\alpha d(e^3)$ for some
scalar $\alpha$. Hence the single-valued triple product 
$\langle e^1,e^2,e^2\rangle=-[e^2 \wedge
e^3]\ne 0$ spans the subspace $H^2_5(W^+)$.

Consider  $\langle e^1,e^2,-e^1,-2e^1,-e^2 \rangle$
instead of $\langle e^1,e^2,e^1,e^1,e^2 \rangle$ and a formal connection $A$
$$ A=
\left(\begin{array}{cccccc}
0 & e^1 & e^3  &e^4  &e^5 &*    \\
  0 & 0 & e^2  &e^3  &e^4 &0    \\
   0   & 0   & 0       &-e^1  &e^2&-e^4{-}te^2     \\
     0 & 0 &0  & 0 & -2e^1 &2e^3       \\
      0&  0 & 0 & 0 & 0 &-e^2 \\
      0&   0 & 0 & 0&0  &0          \\
\end{array}\right).
$$
The corresponding cocycle will be $c(A)=(e^2{\wedge}
e^5{-}3e^3{\wedge} e^4)+te^2{\wedge} e^3$. On the another hand for
an arbitrary defining system $A'$ the corresponding cocycle will
have the form $c(A')=(e^2{\wedge} e^5{-}3e^3{\wedge}
e^4){+}{\dots}$, where dots stand for the summands with the second
grading strictly less than $7$.

\begin{example}
The Lie algebra $\mathfrak{m}_0$ is defined
by its infinite basis $e_1, e_2, \dots, e_n, \dots $
with commutator relations:
$$ \label{m_0}
[e_1,e_i]=e_{i+1}, \; \forall \; i \ge 2; [e_i,e_j]=0, i,j \neq 1.
$$
\end{example}

Let $\mathfrak{g}$ be a pro-nilpotent or ${\mathbb N}$-graded Lie algebra.
The ideals $\mathfrak{g}^{k}$ of the descending central sequence define
a decreasing filtration  of the Lie algebra 
$\mathfrak{g}$
$$
\mathfrak{g}=\mathfrak{g}^{1} \supset \mathfrak{g}^{2} \supset
\dots \supset \mathfrak{g}^{k} =[\mathfrak{g}^{1}, \mathfrak{g}^{k-1}] \supset \dots; \;
[\mathfrak{g}^{k},\mathfrak{g}^{l}] \subset \mathfrak{g}^{k+l}, k,l \in {\mathbb N}.
$$
Consider the associated graded Lie algebra
$$
{\rm gr}\: \mathfrak{g}=\bigoplus_{k=1}^{+\infty} ({\rm gr}\: \mathfrak{g})_k, \;\;
({\rm gr} \: \mathfrak{g})_k=\mathfrak{g}^{k}/\mathfrak{g}^{k{+}1}, k \in {\mathbb N}.
$$

\begin{proposition}
We have the following isomorphisms:
$$ 
{\rm gr} \:W_+ \cong {\rm gr}\: \mathfrak{m}_0 \cong
\mathfrak{m}_0.
$$
\end{proposition}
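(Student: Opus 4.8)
The plan is to make everything explicit. I would start with $\mathfrak{m}_0$, the simplest case: from $[e_1,e_i]=e_{i+1}$ ($i\ge 2$) and $[e_i,e_j]=0$ for $i,j\ne 1$, a one-line induction gives $(\mathfrak{m}_0)^1=\mathfrak{m}_0$ and $(\mathfrak{m}_0)^k=\langle e_{k+1},e_{k+2},\dots\rangle$ for $k\ge 2$, so $(\gr\mathfrak{m}_0)_1=\langle\bar e_1,\bar e_2\rangle$ and $(\gr\mathfrak{m}_0)_k=\langle\bar e_{k+1}\rangle$ for $k\ge 2$. Every defining relation of $\mathfrak{m}_0$ is already homogeneous with respect to the lower central series filtration, so the induced bracket on $\gr\mathfrak{m}_0$ is again $[\bar e_1,\bar e_i]=\bar e_{i+1}$ ($i\ge 2$) with all other brackets zero; hence $\bar e_i\mapsto e_i$ is a Lie algebra isomorphism $\gr\mathfrak{m}_0\xrightarrow{\sim}\mathfrak{m}_0$. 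This settles the second isomorphism of the proposition and reduces the first to proving $\gr W^+\cong\mathfrak{m}_0$.

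For $W^+$ I would run the same computation with the relation $[e_1,e_i]=(i-1)e_{i+1}$: again $(W^+)^1=W^+$ and $(W^+)^k=\langle e_{k+1},e_{k+2},\dots\rangle$ for $k\ge 2$ by induction, so $\gr W^+$ has exactly the same graded dimensions as $\gr\mathfrak{m}_0$ — namely $\bar e_1,\bar e_2$ in degree $1$ and $\bar e_m$ in degree $m-1$ for $m\ge 3$. Write $d(m)$ for this filtration degree, so that $d(1)=d(2)=1$ and $d(m)=m-1$ for $m\ge 3$.

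The one point requiring care is the bracket induced on $\gr W^+$. From $[e_i,e_j]=(j-i)e_{i+j}$ and $e_{i+j}\in(W^+)^{i+j-1}$ (valid once $i+j\ge 3$) one sees that, for $i,j\ge 2$, the element $[e_i,e_j]$ lies one filtration step deeper than the component $(\gr W^+)_{d(i)+d(j)}$ to which it would contribute — indeed $d(i)+d(j)=i+j-2<i+j-1$ — so $[\bar e_i,\bar e_j]=0$ in $\gr W^+$ for all $i,j\ge 2$. The only surviving brackets are $[\bar e_1,\bar e_m]=(m-1)\bar e_{m+1}$ for $m\ge 2$ (with $[\bar e_1,\bar e_2]=\bar e_3$).

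Finally I would remove the integer coefficients by a diagonal change of basis. Over $\mathbb{Q}$ (hence over $\mathbb{R}$, the case of interest here) put $\lambda_1=\lambda_2=1$ and $\lambda_{m+1}=(m-1)\lambda_m$ for $m\ge 2$, so that $\lambda_m=(m-2)!\ne 0$ for $m\ge 3$; then $e_m\mapsto\lambda_m\bar e_m$ is a linear isomorphism $\mathfrak{m}_0\to\gr W^+$ sending $[e_1,e_m]=e_{m+1}$ to $\lambda_1\lambda_m(m-1)\bar e_{m+1}=\lambda_{m+1}\bar e_{m+1}$ and annihilating all other brackets on both sides, hence a Lie algebra isomorphism. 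Composed with the isomorphism of the first paragraph this gives $\gr W^+\cong\mathfrak{m}_0\cong\gr\mathfrak{m}_0$. I expect the only real obstacle to be the bookkeeping in the third paragraph: one must carefully keep apart the original index $m$ of a basis vector from its lower-central-series degree $d(m)$, since it is exactly the gap between $d(i)+d(j)$ and $i+j-1$ that forces the off-diagonal brackets to collapse in $\gr W^+$ even though they are nonzero in $W^+$ itself.
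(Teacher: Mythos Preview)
Your proof is correct and complete. The paper does not actually prove this proposition: it is stated as a fact, followed only by the remark that the natural grading on $\mathfrak{m}_0$ is $(\gr\mathfrak{m}_0)_1=\mathrm{Span}(e_1,e_2)$ and $(\gr\mathfrak{m}_0)_i=\mathrm{Span}(e_{i+1})$ for $i\ge 2$, which is precisely the output of your first paragraph. So there is nothing to compare against beyond that --- you have supplied the argument the paper omits.

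One cosmetic note: you could streamline the bookkeeping by observing at the outset that $d(m)=m-1$ holds for \emph{all} $m\ge 2$ (not just $m\ge 3$), so the inequality $d(i)+d(j)=i+j-2<i+j-1=d(i+j)$ is uniform over $i,j\ge 2$ without needing to single out the case $i=2$ or $j=2$. But this is already implicit in what you wrote, and the substance is fine as it stands.
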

The corresponding natural grading of ${\mathfrak m}_0$ is defined by
$$
({\rm gr}\: \mathfrak{m}_0)_1=Span(e_1,e_2),\;
({\rm gr} \:\mathfrak{m}_0)_i=Span(e_{i{+}1}), \;i\ge 2.
$$

Let $\mathfrak{g}=\oplus_{\alpha}\mathfrak{g}_{\alpha}$ be a
${\mathbb N}$-graded (pro-nilpotent) Lie algebra
and $V$ is a finite-dimensional nilpotent ${\mathfrak g}$-module. 
There is a decreasing filtration of the ${\mathfrak g}$-module module $V$
$$
V^1=V \subset V^2={\mathfrak g}V \subset \dots V^k={\mathfrak g}V^{k-1} \subset 
\dots
$$
One can define
the associated graded module $gr\: V$ over the associated graded Lie algebra $gr\: {\mathfrak g}$
$$
gr\: V=\oplus_{i=1}^{+\infty}(gr\: V)_i, \: gr\:V_i=V^i/V^{i+1}, (gr\: {\mathfrak g})_i (gr\: V)_j \subset 
(gr\: V)_{i+j}, \: i, j \in {\mathbb N}.
$$

Thus, we came to the problem of description of Massey products
in the cohomology $H^*(\mathfrak{m}_0)$. As we saw earlier, it is useful to describe trivial Massey products
$\langle \omega_1,\dots,\omega_n\rangle$ of $1$-cohomology classes
$\omega_1,\dots,\omega_n$. The purpose of this interest is
to consider
Massey products of the form $\langle \omega_1,\dots,\omega_n,\Omega\rangle$,
where $\Omega$ is a cocycle from $H^p(\mathfrak{g})$ for $p>1$.

Consider $p=2$.
It was found out in \cite{FialMill} that  $H^2(\mathfrak{m}_0)$ is spanned
by the cohomology classes of the following set of cocycles \cite{FialMill}
\begin{equation}
\begin{array}{c}
\label{2-hom}
\omega(e^k{\wedge} e^{k{+}1})=
\sum_{l=0}^{k-2} ({-}1)^l e^{k-l}{\wedge} e^{k{+}1{+}l}, k \ge 2.
\end{array}
\end{equation}

All of the cocycles (\ref{2-hom}) can be represented as Massey products.
Namely let us consider the following matrix of a formal connection
$$
A=\left(\begin{array}{ccccccc}
  0      & e^2 & -e^3 & \dots  & (-1)^ke^k & (-1)^{k{+}1}e^{k{+}1}   & 0  \\
  0      & 0      & e^1 & 0& \dots & 0   & e^{k{+}1}   \\
  0      & 0      & 0 & e^1 &  0 & \dots   & e^k   \\
  \dots & \dots &   \dots     & \dots  &\dots  & \dots & \dots  \\
  0  &0    &  0&   \dots  &  0     &  e^1 & e^3 \\
  0   &0   & 0      & 0      & \dots & 0          & e^2   \\
  0    &0  & 0      & 0      &  0& \dots          & 0
  \end{array}\right).
$$
For the related cocycle $c(A)$ we have
$$
c(A)=\sum_{l=2}^{k{+}1}(-1)^l e^l \wedge e^{k+3-l}=
2\omega(e^k{\wedge}e^{k{+}1}).
$$
So we proved that
$$
2\omega(e^k{\wedge}e^{k{+}1}) \in
\langle e^2, \underbrace{e^1, \dots, e^1}_{2k-3}, e^2 \rangle,
\quad k \ge 2.
$$

The space $H^1(\mathfrak{m}_0)$ is spanned by $e^1$ and $e^2$
and therefore an arbitrary $n$-fold Massey product of
elements from $H^1(\mathfrak{m}_0)$ has a form
$$
\underbrace{\langle \alpha_1 e^1 +\beta_1 e^2, \alpha_2 e^1 +\beta_2 e^2,
\dots, \alpha_n e^1 +\beta_n e^2 \rangle}_n.
$$
It follows from $e^1 \wedge e^2=de^3$ that
a triple product
$$
\langle \omega_1, \omega_2, \omega_3 \rangle=
\langle \alpha_1 e^1 +\beta_1 e^2, \alpha_2 e^1 +\beta_2 e^2,
\alpha_3 e^1 +\beta_3 e^2 \rangle
$$
is defined for  all values
$\alpha_i, \beta_i \in {\mathbb K}, i=1,2,3.$
$$
A=\left(\begin{array}{cccccc}
0 & \omega_1 & \gamma_1e^3& 0\\
0 & 0 & \omega_2& \gamma_2e^3\\
0 & 0 & 0& \omega_3\\
0 & 0 & 0& 0 \\
\end{array}\right), \; \gamma_1=\alpha_1\beta_2-\alpha_2\beta_1,
\gamma_2=\alpha_2\beta_3-\alpha_3\beta_2,
$$
The related cocycle $c(A)=\gamma_2\omega_1{\wedge}e^3-
\gamma_1\omega_3{\wedge}e^3$ is trivial if and only if
\begin{equation}
\label{triple_M}
\beta_1(\alpha_2\beta_3-\alpha_3\beta_2)-
\beta_3(\alpha_1\beta_2-\alpha_2\beta_1)=0.
\end{equation}

Let us consider the operator $D_1=ad^*e_1: \Lambda^*(e_2, e_3, \dots)
\to \Lambda^*(e_2, e_3, \dots)$ acting on the chain subcomplex  $\Lambda^*(e_2, e_3, \dots) \subset \Lambda^*(e_1, e_2, e_3, \dots)$ of ${\mathfrak m}_0$.
\begin{equation}
\begin{array}{c}
D_1(e^2)=0, \; D_1(e^i)= e^{i-1}, \; \forall i\ge 3,\\
D_1(\xi\wedge \eta)=D_1(\xi)\wedge \eta +\xi\wedge D_1(\eta),
\; \: \forall \xi, \eta \in \Lambda^*(e_2, e_3, \dots).
\end{array}
\end{equation}

The operator $D_1$ has the right inverse operator
 $D_{-1}: \Lambda^*(e^2,e^3,\dots) \to
\Lambda^*(e^2,e^3,\dots)$,
defined by the formulas
\begin{equation}
\begin{split}
\label{D_{-1}}
D_{-1}e^i=e^{i+1}, \;
D_{-1}(\xi{\wedge} e^i)=
\sum_{l\ge 0}(-1)^l D_{1}^{l}(\xi){\wedge} e^{i+1+l},
\end{split}
\end{equation}
where $i\ge 2$ and $\xi$ stands for an arbitrary form in
$\Lambda^*(e^2,\dots,e^{i-1})$.
One can verify that 
$D_1D_{-1}=Id$ on $\Lambda^*(e^2,e^3,\dots)$.

The sum in the definition (\ref{D_{-1}}) of $D_{-1}$ is always finite
because $D_1^l$ strictly decreases the second grading by $l$.
For instance,
$D_{-1}(e^i\wedge e^k)=\sum_{l=0}^{i-2} ({-}1)^l e^{i-l}{\wedge} e^{k+l+1}$.

There is an explicit  formula for
$D_{-1}(e^{i_1} {\wedge} \dots {\wedge} e^{i_q}{\wedge} e^{i_q})=D_{-1}(0)$
$$
\omega(e^{i_1}{\wedge}\dots {\wedge} e^{i_q} {\wedge}
e^{i_q{+}1})=
\sum\limits_{l\ge 0}(-1)^l D_1^l(e^{i_1}\wedge \dots
\wedge e^{i_q})\wedge e^{i_q+1+l},$$
this sum is also always finite and determines
a homogeneous closed $(q{+}1)$-form
of the second grading $i_1{+}{\dots}{+}i_{q{-}1}{+}2i_{q}{+}1$.

\begin{theorem}[Fialowski, Millionshchikov~\cite{FialMill}]
\label{main_H_m_0}
The bigraded cohomology algebra
$H^*(\mathfrak{m}_0)=\oplus_{k,q} H^q_k(\mathfrak{m}_0)$
is spanned by the cohomology classes of the following homogeneous cocycles:
\begin{equation}
\label{sumHm_0}
\begin{split}
e^1, \: e^2, \:
\omega(e^{i_1}{\wedge}\dots
{\wedge} e^{i_q}{\wedge} e^{i_q{+}1})=
\sum\limits_{l\ge 0}({-}1)^l (ad^* e_1)^l
(e^{i_1}{\wedge} e^{i_2}{\wedge} \dots
{\wedge} e^{i_q}){\wedge} e^{i_q{+}1{+}l}, \\
\end{split}
\end{equation}
where
$q \ge 1, \; 2\le i_1 {<}i_2{<}{\dots} {<}i_q$.

The multiplicative structure is defined by the rules
\begin{multline}
[e^1] {\wedge} \omega(\xi{\wedge}e^i{\wedge}e^{i+1})=0,\;
[e^2] {\wedge} \omega(\xi{\wedge}e^i{\wedge}e^{i+1})
=\omega(e^2{\wedge}\xi{\wedge}e^i{\wedge}e^{i+1}),\\
\omega(\xi{\wedge}e^i{\wedge}e^{i+1})
{\wedge} \omega(\eta{\wedge}e^j{\wedge}e^{j+1})
=\sum_{l=0}^{j-i-2}(-1)^l\omega((ad^*e_1)^l(\xi{\wedge}e^i){\wedge}e^{i+1+l}
{\wedge}\eta{\wedge}e^j{\wedge}e^{j+1})+\\
+(-1)^{i{-}j{+}deg\eta}\sum_{s \ge 1}
\omega((ad^*e_1)^{i{-}j{-}1{+}s}(\xi{\wedge}e^i){\wedge}
(ad^*e_1)^s(\eta{\wedge}e^j){\wedge}e^{j{+}s}
{\wedge}e^{j{+}s{+}1}),
\end{multline}
where $i < j$, $\xi$ and $\eta$ are arbitrary homogeneous forms in
$\Lambda^*(e^2,\dots,e^{i{-}1})$ and $\Lambda^*(e^2,\dots,e^{j{-}1})$,
respectively.
\end{theorem}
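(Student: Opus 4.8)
The plan is to exploit the fact that $\mathfrak{m}_0=\mathbb{K}e_1\ltimes\mathfrak{a}$ is a semidirect sum of the one-dimensional subalgebra $\mathbb{K}e_1$ with the abelian ideal $\mathfrak{a}=\mathrm{Span}(e_2,e_3,\dots)$, which makes the Chevalley--Eilenberg complex transparent. Writing $\Lambda^*(\mathfrak{m}_0)=\Lambda^*(e^1)\otimes\Lambda^*(e^2,e^3,\dots)$ and decomposing a cochain uniquely as $\alpha+e^1\wedge\beta$ with $\alpha,\beta\in\Lambda^*(e^2,e^3,\dots)$, one verifies on monomials (using $de^1=de^2=0$ and $de^k=e^1\wedge e^{k-1}$ for $k\ge3$) that $d\alpha=e^1\wedge D_1\alpha$ and $d(e^1\wedge\beta)=0$, where $D_1=ad^*e_1$ is the derivation of $\Lambda^*(e^2,e^3,\dots)$ defined above. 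It follows at once that
\[
H^q(\mathfrak{m}_0)\cong\Ker\bigl(D_1|_{\Lambda^q(e^2,e^3,\dots)}\bigr)\ \oplus\ e^1\wedge\mathrm{coker}\bigl(D_1|_{\Lambda^{q-1}(e^2,e^3,\dots)}\bigr).
\]
Since $D_1$ has the right inverse $D_{-1}$ on forms of positive degree, it is surjective there, so the cokernel term vanishes for $q\ge2$ and, in degree $1$, contributes only the class of $e^1$; together with $\Ker(D_1|_{\Lambda^1})=\mathrm{Span}(e^2)$ this gives $H^1(\mathfrak{m}_0)=\mathrm{Span}(e^1,e^2)$ and $H^q(\mathfrak{m}_0)=\Ker(D_1|_{\Lambda^q(e^2,e^3,\dots)})$ for $q\ge2$. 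In particular the cohomology ring, apart from the class $e^1$, coincides with the graded subalgebra $\Ker D_1\subset\Lambda^*(e^2,e^3,\dots)$ (a subalgebra because $D_1$ is a derivation), so the remaining assertions all become computations inside $\Lambda^*(e^2,e^3,\dots)$.

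The second step is to pin down $\Ker D_1$. That every cocycle $\omega(e^{i_1}\wedge\dots\wedge e^{i_{q-1}}\wedge e^{i_{q-1}+1})$ of the statement (a form of degree $q$) lies in $\Ker D_1$ is a short telescoping computation: applying $D_1$ to the defining sum $\sum_{l\ge0}(-1)^l D_1^l(e^{i_1}\wedge\dots\wedge e^{i_{q-1}})\wedge e^{i_{q-1}+1+l}$ and using the Leibniz rule, consecutive terms cancel in pairs and only $\pm(e^{i_1}\wedge\dots\wedge e^{i_{q-1}})\wedge e^{i_{q-1}}=0$ is left. That these span $\Ker D_1$ I would prove by counting in each bidegree: rank--nullity for the surjection $D_1\colon\Lambda^q_N(e^2,\dots)\twoheadrightarrow\Lambda^q_{N-1}(e^2,\dots)$ gives $\dim\Ker(D_1|_{\Lambda^q_N})=\dim\Lambda^q_N-\dim\Lambda^q_{N-1}$, while the $\omega(\dots)$ of form degree $q$ and weight $N$ are in bijection with the $q$-element subsets $\{i_1,\dots,i_{q-1},i_{q-1}+1\}\subset\{2,3,\dots\}$ of sum $N$ — that is, with the $q$-subsets of sum $N$ whose two largest members are consecutive — and lowering the largest member of a $q$-subset of sum $N$ by one, which is allowed precisely when its two largest members are not consecutive, bijects the remaining $q$-subsets of sum $N$ with all $q$-subsets of sum $N-1$. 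Hence there are exactly $\dim\Ker(D_1|_{\Lambda^q_N})$ of these cocycles, and they are linearly independent since the $l=0$ term of each is the monomial $e^{i_1}\wedge\dots\wedge e^{i_{q-1}}\wedge e^{i_{q-1}+1}$, the unique monomial occurring in it whose two largest indices are consecutive, and distinct index sets yield distinct such monomials. This establishes the additive part of the theorem.

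For the ring structure, note first that the class $e^1$ is multiplicatively trivial: $e^1\wedge e^1=0$ by oddness, $e^1\wedge e^2=de^3$, and $e^1\wedge\omega(\dots)=d\bigl(D_{-1}\omega(\dots)\bigr)$ since $\omega(\dots)\in\Im D_1$. Every other product is a product of cocycles lying in $\Lambda^*(e^2,e^3,\dots)$, where — as $d$ maps into $e^1\wedge\Lambda^*(e^2,e^3,\dots)$ — two cocycles are cohomologous if and only if equal; so it suffices to verify the displayed formulas as cochain identities. The rule $e^2\wedge\omega(\xi\wedge e^i\wedge e^{i+1})=\omega(e^2\wedge\xi\wedge e^i\wedge e^{i+1})$ is immediate from $D_1e^2=0$, and its left-hand side is $0$ when $e^2$ already occurs in $\xi$. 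The product $\omega(\xi\wedge e^i\wedge e^{i+1})\wedge\omega(\eta\wedge e^j\wedge e^{j+1})$ with $i<j$ is obtained by multiplying the two defining sums, commuting each free factor $e^{i+1+l}$ past $D_1^m(\eta\wedge e^j)$ with the Koszul sign, and splitting the resulting double sum according to whether $i+1+l<j$ — those terms carry no repeated index and reassemble into $\sum_{l=0}^{j-i-2}(-1)^l\omega(\dots)$ — or $i+1+l\ge j$ — the overlapping terms, which telescope exactly as in the computation $D_1\omega=0$ and collapse into the single sum $\sum_{s\ge1}$ with overall sign $(-1)^{i-j+\deg\eta}$. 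I expect this last rearrangement, especially carrying the Koszul signs through the telescoping, to be the only genuinely delicate step; everything else is either structural or a routine use of the Leibniz rule for $D_1$ together with the vanishing of any monomial containing a repeated factor. The finiteness of the defining sums for $\omega(\dots)$ and the identity $D_1D_{-1}=\mathrm{Id}$, both recorded above, are used throughout.
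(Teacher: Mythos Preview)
The paper does not itself prove this theorem --- it is quoted from Fialowski--Millionshchikov --- but the $D_1$, $D_{-1}$ machinery set up immediately before the statement is precisely the framework of that proof, and your argument follows it. Your reduction via the semidirect sum $\mathfrak{m}_0=\mathbb{K}e_1\ltimes\mathfrak{a}$ to $\Ker D_1\subset\Lambda^*(e^2,e^3,\dots)$, the dimension count by the bijection on index sets, and the linear independence via the unique consecutive-top-pair monomial are all correct; the only step you leave as a sketch --- the Koszul-sign bookkeeping in the product of two $\omega$'s --- is, as you say, routine if tedious.
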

It can be seen that the cohomology of the two algebras ${\mathfrak m}_0$ and $W^+$ are very different from each other. The cohomology $H^p({\mathfrak m}_0)$ is infinite-dimensional for $p \ge 2$, while ring multiplication in cohomology $H^*({\mathfrak m}_0)$ is not trivial, although there is a sufficient number of trivial products, in particular, the product mapping $H^1({\mathfrak m}_0) \wedge H^1({\mathfrak m}_0) \to H^2({\mathfrak m}_0)$ vanishes.

We recall that if an $n$-fold Massey product
$\langle \omega_1, \omega_2, {\dots},\omega_n \rangle$
is defined than all $(p+1)$-fold Massey products
$\langle \omega_i, \omega_{i+1}, \dots,\omega_{i+p} \rangle$  for
$1\le i \le n-1, 1\le p \le n-2, i+p \le n$
are defined and trivial.

The following theorem shows that cohomology $H^*(\mathfrak{m}_0)$, as well as $H^*(W^+)$ is generated by non-trivial Massey products, if we include in their number ordinary wedge products, as double Massey products.

\begin{theorem}[Millionshchikov~\cite{Mill3}]
\label{2main_th}
The cohomology algebra  $H^*(\mathfrak{m}_0)$  is generated
with respect to the non-trivial Massey products
by $H^1(\mathfrak{m}_0)$, namely
\begin{equation}
\begin{split}
\omega(e^{2}{\wedge}e^{i_2}{\wedge}{\dots}
{\wedge} e^{i_q}{\wedge} e^{i_q{+}1})=
e^{2}{\wedge}\omega (e^{i_2}{\wedge}{\dots}
{\wedge} e^{i_q}{\wedge} e^{i_q{+}1}),\\
2\omega(e^{k}{\wedge}e^{k{+}1}) \in
\langle e^2, \underbrace{e^1, \dots, e^1}_{2k-3},e^2 \rangle,\\
(-1)^{i_1}\omega(e^{i_1}{\wedge} e^{i_2}{\wedge} {\dots}
{\wedge} e^{i_q}{\wedge} e^{i_q{+}1}) \in
\langle e^2, \underbrace{e^1, \dots, e^1}_{i_1-2},
\omega(e^{i_2}{\wedge}{\dots}
{\wedge} e^{i_q}{\wedge} e^{i_q{+}1})
\rangle,
\end{split}
\end{equation}
\end{theorem}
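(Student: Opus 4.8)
The plan is to split the three families of identities into the one that is essentially a cup product and the two that genuinely require higher defining systems, and then to treat the non-triviality assertions as the real content. The first family, $\omega(e^{2}\wedge e^{i_2}\wedge\cdots\wedge e^{i_q}\wedge e^{i_q+1})=e^{2}\wedge\omega(e^{i_2}\wedge\cdots\wedge e^{i_q}\wedge e^{i_q+1})$, requires no construction at all: it is the second multiplicative rule of Theorem~\ref{main_H_m_0}, $[e^2]\wedge\omega(\xi\wedge e^i\wedge e^{i+1})=\omega(e^2\wedge\xi\wedge e^i\wedge e^{i+1})$, read with $\xi=e^{i_2}\wedge\cdots\wedge e^{i_{q-1}}$ and $i=i_q$ (the hypothesis $2<i_2$ follows from $i_2>i_1\ge2$); as a $2$-fold Massey product it is automatically defined and single-valued, with value the non-zero basis cocycle $\omega(e^2\wedge e^{i_2}\wedge\cdots)$ of Theorem~\ref{main_H_m_0}, so it is a non-trivial (double) Massey product. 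Combined with the other two families this makes the generation statement a terminating recursion: at each step one peels off the least exterior index $e^{i_1}$ --- by the first family when $i_1=2$, by the third family when $i_1\ge3$ and $q\ge2$ --- until a class with $q=1$ is reached and handled by the second family, so that every basis cocycle of $H^*(\mathfrak{m}_0)$ is displayed as an iterated Massey product built from $e^1,e^2\in H^1(\mathfrak{m}_0)$. It remains to handle the second and third families.

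For the existence halves I would write down explicit matrices of formal connections. For $\langle e^2,e^1,\dots,e^1,e^2\rangle$ ($2k-3$ copies of $e^1$) this is the matrix $A$ displayed just before the theorem; one verifies it satisfies the defining-system equations~(\ref{def_syst}) using only $de^1=de^2=0$ and $de^m=-e^1\wedge e^{m-1}$ for $m\ge3$, which makes all of those equations collapse to telescoping identities among the $1$-forms $e^m$, and one evaluates the related cocycle $c(A)=\sum_r\bar a(1,r)\wedge a(r+1,n)$ and rewrites it via~(\ref{2-hom}) as $2\,\omega(e^k\wedge e^{k+1})$. For $\langle e^2,e^1,\dots,e^1,\Omega\rangle$ with $\Omega=\omega(e^{i_2}\wedge\cdots\wedge e^{i_q}\wedge e^{i_q+1})$ ($i_1-2$ copies of $e^1$) I would build the analogous matrix recursively: the second diagonal $e^2,e^1,\dots,e^1,\Omega$, with the remaining non-zero entries placed in the first row and the last column and produced from $\Omega$ by iterating $ad^*e_1=D_1$ and its right inverse $D_{-1}$ of~(\ref{D_{-1}}). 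The defining-system equations then reduce precisely to the identities $D_1D_{-1}=\mathrm{Id}$ on $\Lambda^*(e^2,e^3,\dots)$ and $d\eta=-e^1\wedge D_1\eta$ for $\eta\in\Lambda^*(e^2,e^3,\dots)$, and the related cocycle comes out, up to the sign $(-1)^{i_1}$, to be $\omega(e^{i_1}\wedge e^{i_2}\wedge\cdots\wedge e^{i_q}\wedge e^{i_q+1})=\sum_{l\ge0}(-1)^lD_1^l(e^{i_1}\wedge\cdots\wedge e^{i_q})\wedge e^{i_q+1+l}$ of~(\ref{sumHm_0}) --- this is exactly the recursion built into the $D_{-1}$-definition of $\omega$.

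The main obstacle is non-triviality: one must show $0\notin\langle e^2,e^1,\dots,e^1,e^2\rangle$ and $0\notin\langle e^2,e^1,\dots,e^1,\Omega\rangle$, i.e.\ that the cocycle constructed above is not swallowed by the indeterminacy, and the plan is to compute that indeterminacy with the help of Theorem~\ref{main_H_m_0} and Proposition~\ref{triviality}. Since the full $n$-fold product is defined, all its proper sub-Massey products are defined and trivial (Proposition~\ref{triviality}); moreover every sub-product all of whose arguments are $e^1$'s vanishes because $e^1\wedge e^1=0$, the sub-products $\langle e^2,e^1,\dots,e^1\rangle$ and $\langle e^1,\dots,e^1,e^2\rangle$ are single-valued and zero (a suitable choice of the connecting $1$-forms makes the related cocycle a $\pm de^m$), and $\langle e^1,\dots,e^1,\Omega\rangle$ is, by the first two multiplicative rules, at most a scalar multiple of $\omega(e^2\wedge e^{i_2}\wedge\cdots\wedge e^{i_q}\wedge e^{i_q+1})$. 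Feeding these into the indeterminacy of the full (homogeneous) product, together with the rules $[e^1]\wedge(-)=0$ and $[e^2]\wedge\omega(\xi\wedge e^i\wedge e^{i+1})=\omega(e^2\wedge\xi\wedge e^i\wedge e^{i+1})$ (which is $0$ whenever the resulting monomial repeats a factor), one finds that the whole indeterminacy lies in the span of those basis cocycles $\omega(e^{j_1}\wedge\cdots)$ whose \emph{least} exterior index $j_1$ equals $2$. The targets, however, are $2\,\omega(e^k\wedge e^{k+1})$ with $k\ge3$ and $(-1)^{i_1}\omega(e^{i_1}\wedge e^{i_2}\wedge\cdots)$ with $i_1\ge3$ and $i_1<i_2$, whose least index is $\ge3$, so linear independence of the $\omega$-basis (Theorem~\ref{main_H_m_0}) places the target outside the indeterminacy and hence $0\notin\langle\cdots\rangle$; in fact for the second family the indeterminacy collapses to $0$, so that product is single-valued. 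The one case not covered this way is the base $k=2$, $2\,\omega(e^2\wedge e^3)\in\langle e^2,e^1,e^2\rangle$: here the indeterminacy of this triple product is $[e^2]\cdot H^1(\mathfrak{m}_0)+H^1(\mathfrak{m}_0)\cdot[e^2]$, which is spanned by $[e^1\wedge e^2]=[de^3]=0$, so the product is single-valued with value $2\,[e^2\wedge e^3]=2\,\omega(e^2\wedge e^3)\ne0$. The genuinely delicate point throughout is the precise description of the indeterminacy of a higher Massey product; one keeps it in hand by working only with homogeneous defining systems, so that everything stays inside a single finite-dimensional bigraded component $H^q_k(\mathfrak{m}_0)$ and the comparisons above become equalities of finite-dimensional subspaces.
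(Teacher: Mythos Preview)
Your construction of the defining systems coincides with the paper's: the same matrix $A$ (non-zero only on the second diagonal, the first row $e^2,-e^3,e^4,\dots,(-1)^{i_1}e^{i_1}$, and the last column $\dots,D_{-1}^2\omega,D_{-1}\omega,\omega$) is written down right after the theorem, and the verification and the formula for $c(A)$ rest on the same two identities $d(D_{-1}^k\omega)=e^1\wedge D_{-1}^{k-1}\omega$ and $d((-1)^ke^k)=(-1)^{k-1}e^{k-1}\wedge e^1$ that you invoke. The second family is likewise handled by the matrix displayed just before the theorem. So for existence and for identifying the related cocycle you follow the paper verbatim.

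The paper, being a survey, stops there and defers to \cite{Mill3}; in particular it gives no argument for non-triviality. You go further and sketch one. The strategy---restrict to homogeneous defining systems, use the $\omega$-basis of Theorem~\ref{main_H_m_0}, and separate by the least exterior index---is natural, but one step is not yet justified. Your claim that ``the whole indeterminacy lies in the span of those basis cocycles with least index $2$'' implicitly treats the indeterminacy of an $n$-fold product, $n\ge4$, as if it were generated only by $\alpha_1\cdot(-)$ and $(-)\cdot\alpha_n$, and that is not true in general. Concretely, in a homogeneous defining system for $\langle e^2,e^1,\dots,e^1,e^2\rangle$ every interior entry $a(i,i{+}1)$ with $2\le i\le n{-}2$ has weight~$2$ and may therefore be an arbitrary multiple of $e^2$ (this is precisely the parameter $\alpha$ in the Feigin--Fuchs--Retakh connection displayed earlier in the paper), and one must check that varying these interior parameters does not move $[c(A)]$ off the class $2\,\omega(e^k\wedge e^{k+1})$. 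The check is not hard here---each such perturbation propagates into the first row and last column and contributes only an exact $2$-form to $c(A)$---but it is a genuine computation that your argument, as written, skips. The analogous point applies to the third family, where in addition the last-column entries are $q$-forms and the ``sub-product'' $\langle e^1,\dots,e^1,\Omega\rangle$ need not be single-valued without a separate argument.
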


First of all we present a graded defining system (a graded
formal connection) $A$ for a Massey product
$\langle e^2, e^1, \dots, e^1,\omega(e^{i_2}{\wedge}{\dots}
{\wedge} e^{i_q}{\wedge} e^{i_q{+}1}) \rangle$.
To simplify the formulas we will write $\omega$ instead of
$\omega(e^{i_2}{\wedge}{\dots}
{\wedge} e^{i_q}{\wedge} e^{i_q{+}1})$.

One can verify that the following matrix $A$
with non-zero entries at the second diagonal, first line and
first row gives us an answer.

$$
A=\left(\begin{array}{ccccccc}
  0      & e^2 & -e^3 & e^4  & \dots & (-1)^{i_1}e^{i_1}   & 0  \\
  0      & 0      & e^1 & 0& \dots & 0   & D_{-1}^{i_1-2}\omega   \\
  0      & 0      & 0 & e^1 &  \dots & 0   & D_{-1}^{i_1-3}\omega   \\

   &      &        &        & \dots &            &           \\
  0  &0    & 0      & 0      & \dots & e^1 & D_{-1}\omega \\
  0   &0   & 0      & 0      & \dots & 0          & \omega   \\
  0    &0  & 0      & 0      & \dots & 0          & 0
  \end{array}\right).
$$
The proof follows from
$$
d(D_{-1}^k \omega)=e^1{\wedge}D_{-1}^{k{-}1} \omega, \quad
d((-1)^k e^k)=(-1)^{k{-}1}e^{k{-}1}{\wedge}e^1.
$$
The related cocycle $c(A)$ is equal to
$$
c(A)=\sum_{l\ge 2}^{i_1}(-1)^l e^{l} \wedge D_{-1}^{i_1{-}l} \omega
=(-1)^{i_1}\sum_{k\ge 0}^{i_1{-}2}(-1)^k D_1^k e^{i_1} \wedge D_{-1}^k \omega
$$

\begin{example}
We take $\langle e^2, e^1, \omega(e^4{\wedge} e^5)\rangle $.
$$
A=\left(\begin{array}{cccc}
  0      & e^2 & -e^3 &  0  \\
  0      & 0      & e^1 & D_{-1}\omega(e^4{\wedge} e^5)   \\
  0   &0    & 0          & \omega(e^4{\wedge} e^5)   \\
  0    &0  & 0      & 0
    \end{array}\right).
$$
Compute the related cocycle $c(A)$
\begin{multline}
c(A)=e^2\wedge D_{-1}\omega(e^4{\wedge} e^5)
-e^3 \wedge \omega(e^4{\wedge} e^5)=\\
=e^2\wedge (e^4{\wedge} e^6-2e^3{\wedge} e^7+3e^2{\wedge} e^8)
-e^3 \wedge (e^4{\wedge} e^5-e^3{\wedge} e^6+e^2{\wedge} e^7)=\\
=-e^3 {\wedge} e^4{\wedge} e^5+e^2 {\wedge} e^4{\wedge} e^6
-e^2 {\wedge} e^3{\wedge} e^7=-\omega(e^3 {\wedge} e^4{\wedge} e^5).
\end{multline}
\end{example}
Let us return to Theorem \ref{ThEfr}. Consider the question: for which sets $a_1, \dots, a_n$ of one-dimensional cocycles from $H^*({\mathfrak m}_0)$, all $k$-step Massey products $\langle a_1, \dots, a_n \rangle_k$ are defined and trivial? The following theorem answers this question.
\begin{theorem}[Millionshchikov~\cite{Mill3}]
\label{1_Massey}
Up to an equivalence
the following trivial $n$-fold Massey products of non-zero cohomology
classes from $H^1(\mathfrak{m}_0)$ are defined:

$$
\begin{tabular}{|c|c|c|}
\hline
&&\\[-10pt]
name &  Massey product & parameters\\
&&\\[-10pt]
\hline
&&\\[-10pt]
$A^{n+1}_{\lambda}$
 & $\underbrace{\langle \alpha e^1{+}\beta e^2,\alpha e^1{+}\beta e^2,
\dots,\alpha e^1{+}\beta e^2\rangle}_n$ &
$n \ge 3, \; \lambda=(\alpha,\beta) \in {\mathbb K}P^1$ \\
&&\\[-10pt]
\hline
&&\\[-10pt]
$B^{n+1}_{\alpha,\beta}$&
 $\underbrace{\langle \lambda_1 e^1{+}e^2,\lambda_2 e^1{+}e^2,
{\dots},\lambda_n e^1{+}e^2 \rangle}_n$ &
$\begin{array}{c}
n \ge 3, \; \lambda_i=i \alpha{+}\beta, \\
\alpha,\beta \in {\mathbb K}, \; \alpha \ne 0 \end{array}$
\\
&&\\[-10pt]
\hline
&&\\[-10pt]
$C^{n+1}_{l,\alpha}$ &
$\underbrace{\langle e^1, {\dots}, e^1}_l, e^2{+}\alpha e^1 ,
\underbrace{e^1,\dots, e^1\rangle}_{n-l-1}$ &
$\begin{array}{c} \alpha \in {\mathbb K},\; n \ge 3,\\
0\le l \le n{-}1 \end{array}$  \\
&&\\[-10pt]
\hline
&&\\[-10pt]
$D^{2k+3}_{\alpha,\beta}$&
 $\langle e^2+\alpha e^1, \underbrace{e^1,\dots,e^1}_{2k},
e^2+\beta e^1\rangle$
 & $k \ge 1, \;\alpha,\beta \in {\mathbb K}$ \\[2pt]
\hline
\end{tabular}
$$
\end{theorem}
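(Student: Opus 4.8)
The plan is to translate the statement entirely into the representation theory of $\mathfrak m_0$ and then to exploit a rigidity phenomenon peculiar to this algebra. First, by the rescaling equivalence of Definition \ref{A_equiv} (the example right after it, with a diagonal $C\in GT_n({\mathbb K})$), any element of $H^1(\mathfrak m_0)=\mathrm{Span}(e^1,e^2)$ sitting on the second diagonal of a formal connection may be normalized to $e^1$ or to $\mu e^1+e^2$; so a tuple $(\omega_1,\dots,\omega_n)$ of nonzero $1$-cohomology classes is, up to equivalence, described by a pattern of ``$e^1$-type'' and ``$e^2$-type'' positions together with scalars $\mu_i$ at the latter. Next, by the corollary in Section \ref{k-step} (and the converse obtained by assembling a formal connection diagonal by diagonal, as explained there), all the $k$-step products $\langle\omega_1,\dots,\omega_n\rangle_k$, $k=1,\dots,n-1$, are defined and trivial if and only if the ordinary product $\langle\omega_1,\dots,\omega_n\rangle$ is defined and contains $0$; and by the proposition of \cite{FeFuRe,Dw} this is equivalent to the existence of an honest homomorphism $\rho\colon\mathfrak m_0\to T_n({\mathbb K})$ whose matrix carries $\omega_1,\dots,\omega_n$ on the second diagonal. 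Since $\mathfrak m_0$ is generated by $e_1,e_2$, such a $\rho$ is the same datum as a pair of strictly upper-triangular $(n{+}1)\times(n{+}1)$ matrices $X=\rho(e_1)$, $Y=\rho(e_2)$ with $\mathrm{ad}(X)^a(Y)$, $a\ge 0$, pairwise commuting; moreover $\mathrm{ad}(X)$ kills the second diagonal, so $\omega_i=X_{i,i+1}e^1+Y_{i,i+1}e^2$. Thus I must decide which second diagonals of such pairs occur, up to componentwise rescaling.

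For the ``existence'' half I would exhibit $(X,Y)$ (equivalently, write down the formal-connection matrices as in the examples of Sections \ref{nontrivSection}--\ref{k-step}) for each family. For $A^{n+1}_\lambda$ take $X=\alpha J$, $Y=\beta J$ with $J$ the single Jordan block, so $[X,Y]=0$ and $\rho(e_k)=0$ for $k\ge 3$. For $B^{n+1}_{\alpha,\beta}$ use the graded thread module with basis $f_1,\dots,f_{n+1}$, $e_2f_j=f_{j-1}$, $e_1f_j=(\alpha(j-1)+\beta)f_{j-1}$; all relations of $\mathfrak m_0$ hold because the coefficient is affine in $j$. For $C^{n+1}_{l,\alpha}$ take $X$ with second diagonal $(1,\dots,1,\alpha,1,\dots,1)$ ($\alpha$ in slot $l+1$) and $Y=E_{l+1,l+2}$ of rank one, $E_{pq}$ the matrix units; then $\rho(e_k)\rho(e_{k'})=0$ for all $k,k'\ge 2$, so these matrices commute. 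For $D^{2k+3}_{\alpha,\beta}$ ($n=2k+2$) take $X$ with second diagonal $(\alpha,1,\dots,1,\beta)$ and $Y=E_{1,2}+E_{n,n+1}$: one computes $\mathrm{ad}(X)^a(Y)=(-1)^aE_{1,a+2}+E_{n-a,n+1}$ and $[\mathrm{ad}(X)^aY,\mathrm{ad}(X)^bY]=\bigl((-1)^a-(-1)^b\bigr)\,\delta_{a+b,\,n-2}\,E_{1,n+1}$, which vanishes for all $a,b$ exactly because $n$ is even.

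For completeness I would first apply Proposition \ref{triviality}: every consecutive triple $\langle\omega_i,\omega_{i+1},\omega_{i+2}\rangle$ must be trivial, which by (\ref{triple_M}), with $\gamma_j:=\alpha_j\beta_{j+1}-\alpha_{j+1}\beta_j$, reads $\beta_i\gamma_{i+1}=\beta_{i+2}\gamma_i$. Running through the eight type-patterns of $(\omega_i,\omega_{i+1},\omega_{i+2})$ kills the patterns $(e^1,e^2,e^2)$, $(e^2,e^2,e^1)$, $(e^2,e^1,e^2)$ and, when all three are of $e^2$-type, forces $\mu_i,\mu_{i+1},\mu_{i+2}$ into arithmetic progression. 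Hence, up to equivalence, the word is either all of $e^1$-type ($=A^{n+1}_{(1:0)}$); has exactly one $e^2$-type position ($=$ some $C^{n+1}_{l,\alpha}$); is all of $e^2$-type with the $\mu_i$ in arithmetic progression ($=A^{n+1}_\lambda$ if constant, $=B^{n+1}_{\alpha,\beta}$ otherwise); or has at least two $e^2$-type positions, no two adjacent, consecutive ones at distance $\ge 3$, and not all of $e^2$-type. In the last case one uses that contiguous subwords of a realizable word are again realizable — restrict $\rho$ to the submodule $\mathrm{Span}(f_1,\dots,f_m)$ or quotient by $\mathrm{Span}(f_1)$ — together with the crucial rigidity of $\mathfrak m_0$: a closed $1$-form lies in $\mathrm{Span}(e^1,e^2)$, so for $j>i$ the entry $a(i,j)$ of a graded defining system is a forced scalar multiple of $e^{\,j-i+2}$; consequently the graded defining system, when it exists, is unique, closed-$1$-form perturbations change $c(A)$ only by exact forms, so $\langle\omega_1,\dots,\omega_n\rangle$ is strictly single-valued for $n\ge 3$, and by Theorem \ref{main_H_m_0} its value $c(A)$ lies in $H^2_n(\mathfrak m_0)$, which vanishes for $n$ even and is spanned by $\omega(e^{(n+1)/2}\wedge e^{(n+3)/2})$ for $n$ odd.

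The heart of the matter — and the step I expect to be the main obstacle — is the explicit evaluation of $c(A)$ for ``$D$-shape'' words ($e^2$-type at both ends, $e^1$-type strictly between) and the bootstrap from it. Solving the recursion (\ref{def_syst}) for such a word of length $n$ gives $a(i,i)=\omega_i$, all ``interior'' entries $a(i,j)$ with $i\neq 1$, $j\neq n$ equal to zero, $a(1,1{+}p)=(-1)^pe^{p+2}$ and $a(i,n)=e^{\,n-i+2}$, so that
\[
c(A)=\bigl(\alpha+(-1)^{n-1}\beta\bigr)e^1\wedge e^n+\bigl(1+(-1)^{n-1}\bigr)e^2\wedge e^n+\sum_{r=2}^{n-2}(-1)^{r-1}e^{r+1}\wedge e^{n-r+1}.
\]
Using $e^1\wedge e^n=de^{n+1}$ and collapsing the pairs $\{r,n-r\}$, this is cohomologous to $0$ when $n$ is even, while for $n$ odd it equals $2(-1)^{(n+1)/2}\,\omega(e^{(n+1)/2}\wedge e^{(n+3)/2})\neq 0$; hence a $D$-shape word is defined and trivial precisely when $n$ is even, that is, exactly for the words $D^{2k+3}_{\alpha,\beta}$. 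Combined with subword-closedness this forces, in the remaining case, the $e^2$-type positions to be exactly two and to be the endpoints $1$ and $n$: an $e^2$-type position $m$ interior to the word, together with a second $e^2$-type position, yields by contraction either an odd-length $D$-shape subproduct (non-trivial by the above, so by Proposition \ref{triviality} the whole word is not defined) or a subword $\langle e^1,e^2,\underbrace{e^1,\dots,e^1}_{g-1},e^2\rangle$ with $g$ odd, whose $c(A)$ is again a nonzero multiple of the generator of $H^2_{g+2}(\mathfrak m_0)$ by the same recursive computation (the case $g=3$ being $3\,\omega(e^3\wedge e^4)$); three or more $e^2$-type positions are excluded the same way, since at least one of them is then interior. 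This chain of nonvanishing verifications for the derived subproducts is the bulk of the argument, and it is carried out in the spirit of the Feigin--Fuchs--Retakh / Millionshchikov computations recalled in Section \ref{nontrivSection}.
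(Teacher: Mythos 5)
The survey contains no proof of Theorem \ref{1_Massey}: it is quoted from \cite{Mill3}, with only the closing remark that it is related to the classification of indecomposable graded thread modules over $\mathfrak m_0$ from \cite{Ben}. Your reduction of the statement to the existence of homomorphisms $\rho\colon\mathfrak m_0\to T_n({\mathbb K})$ with prescribed second diagonal is therefore precisely the route the paper alludes to, and the positive half of your argument is sound: the pairs $(X,Y)$ you exhibit for the families $A$, $C$, $D$ work (in particular the identity $[\mathrm{ad}(X)^aY,\mathrm{ad}(X)^bY]=((-1)^a-(-1)^b)\,\delta_{a+b,\,n-2}\,E_{1,n+1}$ is correct and does single out even $n$), the thread module for $B$ is the standard one, the constraints you extract from (\ref{triple_M}) on consecutive triples are right, and your $c(A)$ formula for $D$-shaped words checks out in low length.

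The genuine gap is in the exclusion half, where you must show that odd-length $D$-shaped words and the odd-length words $\langle e^1,e^2,e^1,\dots,e^1,e^2\rangle$ are not realizable. You compute $c(A)$ for one graded defining system and then assert that ``the graded defining system, when it exists, is unique,'' hence single-valuedness. That uniqueness claim is false as stated: an entry $a(i,i+1)$ between two adjacent $e^1$-type positions solves $da(i,i+1)=0$ and may be any closed $1$-form, in particular any multiple of $e^2$, which has the \emph{same} weight as the graded choice; such perturbations propagate into the first row and last column, and it is not obvious that they leave the top-weight part of $[c(A)]$ unchanged (in the length-five case they are in fact forced to vanish by solvability of the later equations, but that is a computation, not a formality, and single-valuedness of a $(2k+1)$-fold product is never free). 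The repair is available inside your own framework and avoids single-valuedness entirely: for any defining system, i.e.\ any homomorphism $\tilde\varphi\colon\mathfrak m_0\to\tilde T_n({\mathbb K})$, the lowest-diagonal component of $\tilde\varphi(e_{a+2})$ is forced to be $\mathrm{ad}(X_1)^a(Y_1)$, where $X_1,Y_1$ are the second diagonals dictated by the word; hence the $(1,n+1)$-entry of $[\tilde\varphi(e_i),\tilde\varphi(e_j)]$ for $i,j\ge 2$, $i+j=n+2$, is the same for every defining system, and since every $2$-coboundary of $\mathfrak m_0$ vanishes on $e_i\wedge e_j$ with $i,j\ge 2$, one non-zero such entry makes the obstruction class non-zero for \emph{every} defining system. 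Your own commutator formulas then deliver exactly the required non-vanishing for both families of bad words. With that substitution the case analysis closes, provided you also note that when the two $e^2$-positions are $\{2,n\}$ or $\{1,n-1\}$ (or $\{1,n\}$ with $n$ odd) the bad word is the whole word rather than a proper subword, so it is this direct non-realizability statement, not Proposition \ref{triviality}, that must be invoked there.
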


It should be noted here that this result is related to the classification
of indecomposable
graded thread modules over $\mathfrak{m}_0$ from \cite{Ben}.


\section{Massey products in Koszul homology of local rings}
\label{local_rings}

Starting with this section, we turn to a discussion of triple and higher Massey products in Koszul homology of local rings and their applications in toric topology.

\begin{definition}\label{koszul}
Let $(A,\mathbf m,\ko)$ be a (commutative Noetherian) local ring, its unique maximal ideal $\mathbf m$ having minimal set of generators $(x_{1},\ldots,x_{m})$ and its residue field being $\ko=A/\mathbf m$.
A \emph{Koszul complex} of $(A,\mathbf m,\ko)$ is defined to be an exterior algebra $K_{A}=\Lambda\,A^{m}$, where $A^m$ denotes the free $A$-module generated by a set $\{e_{1},\ldots,e_{m}\}$, which is a differential graded algebra with a differential $d$ acting as follows:
$$
d(e_{i_1}\wedge\ldots\wedge e_{i_k})=\sum\limits_{r=1}^{k}(-1)^{r-1}x_{i_r}e_{i_1}\wedge\ldots\wedge \widehat{e_{i_{r}}}\wedge\ldots\wedge e_{i_k}.
$$ 
\end{definition}

\begin{definition}\label{poincare}
Let $(A,\mathbf m,\ko)$ be a local ring. Then for an $A$-module $M$ we define its \emph{Poincar\'e series} to be a formal power series of the type
$$
P_{A}(M;t)=\sum\limits_{i=0}^{\infty}\dim_{\ko}\Tor_{i}^{A}(M,\ko)t^{i}.
$$ 
By definition, the $\ko$-module $\Tor^{A}_{i}(M,\ko)$ is the $i$th homology of a projective resolution for $\ko$ (the latter viewed as an $A$-module via the quotient map $A\to A/\mathbf m=\ko$) tensored by $M$.\\

We call $P_{A}:=P_{A}(\ko;t)$ \emph{Poincar\'e series} of the local ring $A$.
\end{definition}

A classical problem of homological algebra was to prove a conjecture by Serre and Kaplansky, which asserted that $P_{A}$ is a rational function.
Anick~\cite{Anick82} (1982) found a counterexample being a quotient ring of a polynomial ring over an ideal generated by a set of monomials alongside with one binomial. 

On the other hand, in 1982 Backelin~\cite{Back82} proved the conjecture is true for \emph{monomial rings}, that is,   
a quotient ring of a polynomial ring by an ideal generated by monomials.
More precisely, a Poincar\'e series of a monomial ring 
$$
A=\ko[x_{1},\ldots,x_{m}]/I
$$
has a form
$$
P_{A}=\frac{(1+t)^{m}}{Q_{A,\ko}(t)}.
$$
It remains to be an open problem to determine effectively the coefficients of the denominator polynomial $Q_{A,\ko}(t)$ for various important classes of local rings.

In fact, homology of local rings $(A,\mathbf{m},\ko)$ and that of loop spaces $\Omega X$ are closely related. Namely, when the quotient field $\ko=\mathbb Q,$ or $\mathbb R$ and $\mathbf{m}^{3}=0$, Poincar\'e series of $A$ acquires topological interpretation as Poincar\'e series of a loop space for a certain simply connected 4-dimensional CW complex $X$. This enabled to solve the Serre--Kaplansky conjecture in both algebraic and topological settings at the same time.

The properties of the Poincar\'e series of a loop space $\Omega X$ on a finite simply connected CW complex $X$ were related to the homotopy properties of $X$, in particular, to the generating function for ranks of the homotopy groups of $X$, in a series of influential works by Babenko~\cite{Ba79,Ba80,Ba86}.
An extensive survey on the problem of rationality and growth for Poincar\'e series in the context of homological algebra and homotopy theory can be found in~\cite{Ba86}.
 
Using a spectral sequence associated with a presentation of a local ring as a quotient ring of a regular local ring, Serre showed that for any local ring $A$ the following coefficient-wise inequality for its Poincar\'e series holds:
\begin{equation}\label{serre}
P_{A}\leq\frac{(1+t)^m}{1-\sum\,b_{i}t^{i+1}}, 
\end{equation} 
where $m=\dim_{\ko}\mathbf{m}/\mathbf{m}^{2}$ and $b_{i}=\dim_{\ko}H_{i}(K_{A})$.

In 1962 Golod obtained a key result: he identified an important class of local rings with rational Poincar\'e series in terms of vanishing of multiplication and all Massey products in their Koszul homology.

\begin{theorem}[{\cite{Go}}]\label{golod}
Let $A$ be a local ring. Then Serre's inequality (\ref{serre}) turns into equality if and only if multiplication and all Massey products in $H_{+}(K_{A})$ are trivial.
\end{theorem}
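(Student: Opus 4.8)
The plan is to prove Golod's theorem (Theorem \ref{golod}) by constructing, from a Koszul complex $K_A$ in which multiplication and all Massey products vanish, an explicit minimal free resolution of $\ko$ over $A$ whose ranks realize Serre's upper bound (\ref{serre}), and conversely to show that a non-trivial product or Massey product forces a strict drop in the ranks. The main tool is the bar-type construction: if $V$ denotes the graded $\ko$-vector space $\widetilde H_+(K_A)$ with a chosen set of cycle representatives, one builds a resolution on the tensor algebra-like module $A\otimes \Lambda(e_1,\dots,e_m)\otimes T(s V)$ (the Koszul complex on the $e_i$ tensored with the tensor coalgebra on a shifted copy of $V$), with differential $d = d_0 + d_1 + d_2 + \cdots$, where $d_0$ is the Koszul differential, $d_1$ sends a generator $sv$ to a cycle representing $v$, and the higher $d_k$ are built inductively from defining systems for $k$-fold Massey products. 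The hypothesis that all Massey products (and the product itself, the $2$-fold case) vanish is exactly what lets this induction go through: at each stage the obstruction to extending the differential is a Massey product, which we are assuming to be zero, so a primitive can be chosen and $d_{k+1}$ defined.

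The key steps, in order: first, set up the twisted tensor-product complex $F_\bullet = (A\otimes\Lambda A^m\otimes T(sV),\, d)$ and verify that $d_0+d_1$ already resolves $\ko$ through low degrees; second, run the inductive construction of $d_2, d_3,\dots$, at each step identifying the obstruction cocycle with (a component of) an $n$-fold Massey product of classes in $\widetilde H_+(K_A)$ and invoking triviality to split it off — here one needs the generalized Leibniz/Bianchi formalism from Section \ref{Massey_S} to check that the obstruction is indeed closed and that its class is the Massey product; third, check acyclicity of the resulting $F_\bullet$ (e.g. by a filtration/spectral-sequence argument: the associated graded differential is $d_0+d_1$, which computes $\Tor^A(\ko,\ko)$ in the "formal" case, and since the total complex is bounded appropriately the spectral sequence degenerates); fourth, observe that $F_\bullet$ is minimal (all entries of $d$ lie in $\mathbf m$ by construction, since the cycle representatives lie in $\mathbf m K_A$ and Massey primitives can be chosen in $\mathbf m K_A$ too), so $\dim_\ko \Tor_i^A(\ko,\ko)$ equals the rank of $F_i$, and a bookkeeping of ranks of $\Lambda A^m \otimes T(sV)$ gives precisely the generating function $(1+t)^m/(1-\sum b_i t^{i+1})$. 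For the converse, if some product or Massey product is non-trivial, then the corresponding term in a minimal resolution cannot be cancelled, forcing $\dim_\ko\Tor_i^A(\ko,\ko)$ to be strictly smaller than the Serre bound in some degree; more precisely one shows the Eilenberg–Moore / Tor spectral sequence has a non-zero differential governed by that Massey product.

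The main obstacle is the inductive step: one must show that the obstruction to extending the differential at stage $n$ is genuinely a cocycle representing an $n$-fold Massey product (so that its vanishing is both necessary and sufficient), and that the choices of primitives can be made coherently — i.e. that a defining system in the sense of Definition \ref{koszul}-adjacent formalism actually assembles into a well-defined map $d_n$ on all of $T(sV)$ simultaneously, not just on a single tensor word. This is a matching/compatibility problem across the many "defining systems" one is running in parallel, and it is where the combinatorics of the tensor coalgebra structure interacts delicately with the sign conventions of the involution $\bar a = (-1)^{k+1}a$ and the generalized Leibniz rule $d(AB) = (dA)B - \bar A(dB)$. A clean way to organize this is to phrase the whole construction as the statement that $K_A$ is formal as a differential graded algebra (in the appropriate sense, all $A_\infty$ higher products $m_k$ on $\widetilde H_+(K_A)$ vanishing for $k\ge 2$ is equivalent to vanishing of all Massey products when the cohomology has trivial products) and then quote the standard fact that the two-sided bar construction on a formal augmented dga computes the right Tor and is minimal; verifying the $A_\infty$-minimal-model input is then the one technical point that must be done by hand.
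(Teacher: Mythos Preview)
The paper does not prove this theorem: it is a survey, and Theorem~\ref{golod} is stated with attribution to Golod~\cite{Go}, with Gulliksen--Levin~\cite{GL} cited for the subsequent terminology. There is therefore no proof in the paper to compare your proposal against.

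For what it is worth, your sketch follows the standard line (as in~\cite{GL} or Avramov's lecture notes): construction of the Golod resolution $K_A\otimes T(s\widetilde H_+(K_A))$ via a \emph{trivial Massey operation}, acyclicity by a filtration/spectral-sequence argument, minimality, and the rank count yielding the Serre bound. You have correctly flagged the coherence of the simultaneously chosen defining systems as the delicate point; in the literature this is packaged as the existence of a single function $\gamma$ on tuples from a fixed basis of $\widetilde H_+(K_A)$ satisfying $d\gamma(h_1,\ldots,h_n)=\sum_i \overline{\gamma(h_1,\ldots,h_i)}\,\gamma(h_{i+1},\ldots,h_n)$, built by induction on $n$. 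One wording issue in your converse: the phrase ``the corresponding term in a minimal resolution cannot be cancelled'' is backwards --- a non-trivial Massey product gives a \emph{non-zero} differential in the spectral sequence, which \emph{does} cancel, making $\Tor$ strictly smaller than the $E_1$ page and hence than the Serre bound; your final sentence about the Eilenberg--Moore spectral sequence states this correctly, so the earlier phrase should simply be deleted or reversed.
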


In their monograph~\cite{GL} Gulliksen and Levin proposed to refer to the local rings with the above property as \emph{Golod rings}.

In view of the above mentioned correspondence between Poincar\'e series of local rings and that of loop spaces, Golod rings correspond to loop spaces over wedges of spheres of various dimensions. In this case, spectral sequence of a path--loop fibration degenerates, the Betti numbers grow as fast as possible, and the Poincar\'e series turns out to be equal to a fraction with denominator as in~(\ref{serre}), where $b_i$ are the Betti numbers of the base space.

\begin{example}[{\cite{Go}}]
Let $A$ be a free reduced nilpotent algebra, that is, a quotient ring 
$A_{n,r}=\frac{\ko[x_{1},\ldots,x_{n}]}{(x_{1},\ldots,x_{n})^{r}}$. Golod~\cite{Go} observed that multiplication and all Massey products are trivial in Koszul homology of $A_{n,r}$ and, furthermore, its Betti numbers are equal to $b_{i}=\binom{i+r-2}{r-1}\binom{n+r-1}{i+r-1}$. Therefore, by Theorem~\ref{golod}, $A_{n,r}$ is a Golod ring and 
$$
P_{A_{n,r}}=\frac{(1+t)^n}{1-\sum\limits_{i=1}^{n}\binom{i+r-2}{r-1}\binom{n+r-1}{i+r-1}t^{i+1}},
$$
which generalizes a computation of Poincar\'e series given by Kostrikin and Shafarevich~\cite{KoSh57}.  
\end{example}

From now on we concentrate mainly on the special class of monomial rings called face rings, or Stanley--Reisner rings of simplicial complexes, see Definition~\ref{SR}. It is well known that applying to any monomial ring a procedure called `polarization of a monomial ideal'~\cite{Hart,Hoch} results in a Stanley--Reisner ring (of a certain simplicial complex) carrying the same homological properties as the initial ring. To define this special and important class of monomial rings, we need to recall the notions of a simplicial complex and a simple polytope.

\begin{definition}\label{simple}
By an (\emph{abstract}) {\emph{simplicial complex}} on a vertex set $[m]=\{1,2,\ldots,m\}$ we mean a subset $K$ of $2^{[m]}$ such that if $\sigma\in K$ and $\tau\in\sigma$, then $\tau\in K$. The singleton elements of $K$ are called its \emph{vertices} and the \emph{dimension} of $K$ is defined to be one less than the maximal number of vertices in an element of $K$.

By a (\emph{convex}) $n$-{\emph{dimensional simple polytope}} $P$ with $m$ facets we mean a bounded intersection of $m$ halfspaces in $\R^n$ such that the supporting hyperplanes of those halfspaces are in general position. The latter condition is equivalent to saying that each vertex of $P$ is an intersection of exactly $n$ of its \emph{facets} (i.e. faces of codimension one).   
\end{definition}

\begin{example}
Let $P^*$ be the polytope combinatorially dual to a simple $n$-dimensional polytope $P$ with $m$ facets. Then $P^*$ is a convex hull of $m$ vertices, which are in general position in the ambient Euclidean space $\R^n$. Therefore, all proper faces of $P^*$ are simplices (we say, $P^*$ is a \emph{simplicial polytope}) and its boundary $K_P=\partial P^*$ is a simplicial complex of dimension $n-1$ with $m$ vertices. 
\end{example}

\begin{definition}\label{SR}
Let $\ko$ be a commutative ring with unit and $K$ be a simplicial complex on $[m]$. We call a quotient ring
$$
\ko[K]=\frac{\ko[v_{1},\ldots,v_{m}]}{(v_{i_1}\cdots v_{i_k}\,|\,\{i_{1},\ldots,i_{k}\}\notin K)},
$$
of a polynomial algebra over a monomial ideal a \emph{face ring}, or a \emph{Stanley--Reisner ring} of $K$. The monomial ideal $I=(v_{i_1}\cdots v_{i_k}\,|\,\{i_{1},\ldots,i_{k}\}\notin K)$ is called the \emph{Stanley--Reisner ideal} of $K$.
\end{definition}

By definition, $\ko[K]$ is a finite $\ko$-algebra and also a finitely generated $\ko[m]=\ko[v_{1},\ldots,v_{m}]$-module via the natural projection.
Now we are going to formulate a graded version of Definition~\ref{koszul}.

\begin{definition}\label{gradedkoszul}
Set $\mathrm{mdeg}(u_{i})=(-1;0,\ldots,2,\ldots,0)$, $\mathrm{mdeg}(v_{i})=(0;0,\ldots,2,\ldots,0)$ for $1\leq i\leq m$, and consider a $(0,1)$-vector $\mathbf{a}\in\mathbb{Z}_{2}^{m}$ (this grading will acquire its topological interpretation in Theorem~\ref{Hochtheorem} and Theorem~\ref{zkcoh} below).
Then a \emph{multigraded Tor-module} of $\ko[K]$ is a direct sum of $\ko$-modules
$$
\Tor_{\ko[m]}^{-i,2\mathbf{a}}(\ko[K],\ko)=H^{-i,2\mathbf{a}}[\ko[K]\otimes_{\ko}\Lambda[u_{1},\ldots,u_{m}],d],
$$
where in the latter differential graded algebra its differential $d$ acts as follows: $d(u_{i})=v_{i}$ and $d(v_i)=0$ for all $1\leq i\leq m$.
\end{definition}

Denote by $K_{I}:=K\cap 2^{I}$ the \emph{induced subcomplex} of $K$ on the vertex set $I\subset [m]$.
In the above notation, the next result due to Hochster~\cite{Hoch} describes the $\ko$-module structure of $\Tor_{\ko[m]}^{-i,2\mathbf{a}}(\ko[K],\ko)$ in terms of induced subcomplexes in $K$. 

\begin{theorem}[{\cite{Hoch}}]\label{Hochtheorem}
There is a $\ko$-module isomorphism
$$
\Tor_{\ko[m]}^{-i,2\mathbf{I}}(\ko[K],\ko)\cong\tilde{H}^{|I|-i-1}(K_I;\ko),
$$
where the $j$th component of the $(0,1)$-vector $\mathbf{I}$ is either 0 or 1, depending on whether or not $j\in [m]$ is an element of $I\subset [m]$, and $|I|$ denotes the cardinality of $I$. On the right hand side one has a reduced simplicial cohomology group of an induced subcomplex $K_I$ in $K$. 
\end{theorem}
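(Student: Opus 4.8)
The plan is to exhibit, for each multidegree $2\mathbf{I}$ with $\mathbf{I}\in\{0,1\}^m$, an explicit isomorphism of $\ko$-modules between the $(-i)$th homology of the multigraded component of $\ko[K]\otimes_\ko\Lambda[u_1,\dots,u_m]$ and the reduced simplicial cochain cohomology of the induced subcomplex $K_I$. First I would decompose the Koszul-type complex $R_*=\ko[K]\otimes_\ko\Lambda[u_1,\dots,u_m]$ (with $d u_i = v_i$, $d v_i = 0$) into a direct sum over all multidegrees $\mathbf{a}\in\mathbb{Z}^m$, using that $d$ preserves multidegree; since each $v_i$ has multidegree $2\mathbf{e}_i$ and each $u_i$ has multidegree $-\mathbf{e}_1^{(\text{hom})}+2\mathbf{e}_i$, the component in multidegree $2\mathbf{a}$ is nonzero only when every entry $a_j\ge 0$, and after identifying the homological degree one checks that a monomial $v^{\mathbf{b}}u_{\sigma}$ (for $\sigma\subseteq[m]$, $\mathbf{b}\in\mathbb{Z}_{\ge 0}^m$) lands in multidegree $2\mathbf{I}$ iff $\mathbf{b}+\chi_\sigma=\mathbf{I}$, i.e.\ iff $\sigma=I\setminus\mathrm{supp}(\mathbf{b})$ with $\mathbf{b}$ supported in $I$. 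Because $\mathbf{I}$ is a $(0,1)$-vector, $\mathbf{b}$ must itself be a $(0,1)$-vector, so $\mathbf{b}=\chi_{I\setminus\sigma}$; thus the multidegree-$2\mathbf{I}$ component has a $\ko$-basis indexed by pairs $(\sigma,I\setminus\sigma)$ with $\sigma\subseteq I$, subject to the Stanley--Reisner relation that $v^{\chi_{I\setminus\sigma}}\ne 0$ in $\ko[K]$, i.e.\ $I\setminus\sigma\in K$.

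The key step is then to recognize this subcomplex as (a shift of) the simplicial cochain complex of $K_I$. I would set up the correspondence sending the basis element indexed by $\sigma\subseteq I$ with $I\setminus\sigma\in K$ to the dual basis cochain on the simplex $I\setminus\sigma$ of $K_I$; since the simplices of $K_I$ are exactly the subsets $\tau\subseteq I$ with $\tau\in K$, these match up bijectively, with $|\sigma| = |I|-|\tau|$. Under this identification the Koszul differential $d(v^{\chi_{I\setminus\sigma}}u_\sigma)=\sum_{r\in\sigma}\pm\, v_{i_r}v^{\chi_{I\setminus\sigma}}u_{\sigma\setminus r}$ removes an element $i_r$ from $\sigma$ (equivalently, adds the vertex $i_r$ to the face $I\setminus\sigma$ of $K_I$), which is precisely the simplicial coboundary. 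One must verify that homological degree $-i$ on the Tor side corresponds to cochain degree $|I|-i-1$ on the topological side: an element $v^{\chi_{I\setminus\sigma}}u_\sigma$ has homological degree $-|\sigma|$, while the corresponding face $\tau=I\setminus\sigma$ of $K_I$ has dimension $|\tau|-1=|I|-|\sigma|-1$, so degree $-i$ matches dimension $|I|-i-1$, as claimed. The appearance of \emph{reduced} cohomology comes from the extreme cases $\sigma=I$ (giving $\mathbf{b}=0$, the empty face, i.e.\ the augmentation) and $\sigma=\emptyset$ (requiring $I\in K$), which are exactly the augmentation degrees in the reduced cochain complex.

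The main obstacle is bookkeeping the signs so that the Koszul differential matches the simplicial coboundary on the nose (rather than up to an overall sign depending on $\sigma$); this is handled by fixing a total order on $[m]$, writing $u_\sigma$ as the wedge of the $u_j$ in increasing order, and choosing the dual-cochain identification compatibly, so that the Koszul sign $(-1)^{r-1}$ for deleting the $r$th factor of $u_\sigma$ becomes the standard simplicial sign $(-1)^{\text{position of }i_r}$ for inserting the vertex $i_r$ into $\tau=I\setminus\sigma$. Once the chain-level isomorphism is established, passing to homology is immediate, and summing over all $I\subseteq[m]$ (equivalently, over all $(0,1)$-vectors $\mathbf{I}$) recovers the full multigraded statement. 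The other cases $\mathbf{a}\notin\{0,1\}^m$ do not contribute to the $(0,1)$-multigraded Tor-modules named in the theorem, so no further argument is needed there.
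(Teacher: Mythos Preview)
The paper does not give its own proof of this statement: Theorem~\ref{Hochtheorem} is stated with attribution to Hochster~\cite{Hoch} and used as a black box, so there is no proof in the paper to compare against. Your argument is the standard derivation of Hochster's formula via the multigraded decomposition of the Koszul complex of $\ko[K]$, and it is correct in outline: restricting to the multidegree $2\mathbf{I}$ picks out exactly the monomials $v^{\chi_{I\setminus\sigma}}u_\sigma$ with $\sigma\subseteq I$ and $I\setminus\sigma\in K$, and the Koszul differential becomes the simplicial coboundary on $K_I$ after the identification $\sigma\leftrightarrow\tau=I\setminus\sigma$.

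One small caveat on your sign discussion: the claim that the Koszul sign $(-1)^{r-1}$ for removing the $r$th generator from $u_\sigma$ literally equals the simplicial sign $(-1)^{\text{position of }i_r\text{ in }\tau\cup\{i_r\}}$ is not correct as stated, because the position of $i_r$ inside $\sigma$ and its position inside $(I\setminus\sigma)\cup\{i_r\}$ are generally different. The usual fix is to insert a global sign twist in the isomorphism, e.g.\ sending $v^{\chi_{I\setminus\sigma}}u_\sigma$ to $\varepsilon(\sigma,I)\cdot e_{I\setminus\sigma}^{*}$ for an appropriate $\varepsilon(\sigma,I)\in\{\pm1\}$ (a common choice is the sign of the shuffle permutation of $\sigma$ and $I\setminus\sigma$ inside $I$). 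With that adjustment the two differentials match on the nose; without it one still gets an isomorphism of complexes up to sign, which is enough for the $\ko$-module statement, but your paragraph slightly overstates how automatic the match is.
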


Now let us interprete the Tor-algebra (Koszul homology) 
$$
K_{\ko[K]}\cong\oplus\Tor_{\ko[m]}^{-i,2\mathbf{a}}(\ko[K],\ko)
$$
of a Stanley--Reisner ring $\ko[K]$ of a simplicial complex $K$ as a cohomology ring of a certain finite CW complex $\mathcal Z_K$. The space $\zk$, called a \emph{moment-angle-complex} of $K$, is the main object of study in Toric Topology and is a particular case of the following general construction, which is of its own interest in modern homotopy theory, see~\cite{BBC}. 
 
Denote by 
$$
({\bf{X}},{\bf{A}})=(\underline{X}_i,\underline{A}_i):=\{(X_i,A_i)\}_{i=1}^{m}
$$ 
an ordered set of topological pairs. The case $X_{i}=X, A_{i}=A$ was firstly introduced in~\cite{bu-pa00-2} under the name of a \emph{K-power} and was then intensively studied and generalized in a series of more recent works, among which are~\cite{BBCG10,G-T13,IK}. 

\begin{definition}[{\cite{BBCG10}}]
A {\emph{polyhedral product}} over a simplicial complex $K$ on the vertex set $[m]$ is a topological space
$$
({\bf{X}},{\bf{A}})^K=\bigcup\limits_{I\in K}({\bf{X}},{\bf{A}})^I\subseteq\prod\limits_{i=1}^{m}\,X_{i},
$$
where $({\bf{X}},{\bf{A}})^I=\prod\limits_{i=1}^{m} Y_{i}$ and $Y_{i}=X_{i}$ if $i\in I$, and $Y_{i}=A_{i}$ if $i\notin I$.
\end{definition}
The term `polyhedral product' was suggested by Browder (cf.~\cite{BBCG10}).

\begin{example}
Suppose $X_{i}=X$ and $A_{i}=A$ for all $1\leq i\leq m$. Then the next spaces are particular cases of the above construction of a polyhedral product $({\bf{X}},{\bf{A}})^{K}=(X,A)^K$.
\begin{itemize}
\item[(1)] {\emph{Moment-angle-complex}} 
$\zk=(\mathbb{D}^2,\mathbb{S}^1)^K$;
\item[(2)] {\emph{Real moment-angle-complex}} 
$\mathcal R_K=([-1;1],\{-1,1\})^K$;
\item[(3)] \emph{Davis--Januzskiewicz space} $DJ(K):=\mathrm{E}\mathbb{T}^{m}\times_{\mathbb{T}^m}\zk\simeq(\mathbb{C}P^{\infty},*)^K$;
\item[(4)] Cubical complex $\cc(K)=(I^1,1)^K$ in the $m$-dimensional cube $I^{m}=[0,1]^m$, which is PL-homeomorphic to a cone over a barycentric subdivision of $K$.
\end{itemize} 
\end{example}

As was shown by Buchstaber and Panov~\cite{bu-pa00-2}, one has a commutative diagram
$$\begin{CD}
  \zk @>>>(\mathbb{D}^2)^m\\
  @VVrV\hspace{-0.2em} @VV\rho V @.\\
  \cc(K) @>i_c>> I^m
\end{CD}\eqno 
$$
where $i_{c}:\,\cc(K)\hookrightarrow I^{m}=(I^1,I^1)^{[m]}$ is an embedding of a cubical subcomplex, induced by an embedding of pairs: $(I^1,1)\subset (I^1,I^1)$, the maps $r$ and $\rho$ are projection maps onto the orbit spaces of a compact torus $\mathbb{T}^m$-action, induced by coordinatewise action of $\mathbb{T}^m$ on the complex unitary polydisk $(\mathbb{D}^2)^m$ in $\C^m$.

When $K=K_P$ is a boundary of an $n$-dimensional simplicial polytope with $m$ vertices, see Definition~\ref{simple}, Buchstaber and Panov~\cite{bu-pa00-2} proved $\zk$ is equivariantly homeomorphic to a smooth compact 2-connected $(m+n)$-dimensional manifold $\zp$ with a compact torus $\mathbb{T}^{m}$-action. They called it a \emph{moment-angle manifold} of the simple $n$-dimensional polytope $P$ with $m$ facets. This space appeared firstly in the paper by Davis and Januszkiewicz~\cite{DJ}. 

The following definition of $\zp$ given by Buchstaber, Panov, and Ray~\cite{BPR} is equivalent to the definition of $\zp$ from~\cite{DJ}. Its advantage is that it yields a realization of $\zp$ as a complete intersection of Hermitian quadrics in $\mathbb C^m$.

\begin{definition}
A \emph{moment-angle manifold} $\mathcal Z_P$ of a polytope $P$ is a pullback defined from the following commutative diagram
$$\begin{CD}
  \mathcal Z_P @>i_Z>>\C^m\\
  @VVV\hspace{-0.2em} @VV\mu V @.\\
  P @>i_P>> \R^m_\ge
\end{CD}\eqno 
$$
where $i_P$ is an affine embedding of $P$ into the nonnegative orthant $\R^m_\ge=\{x=(x_{1},\ldots,x_{m})\,|\,x_{i}\geq 0\,\text{ for all }1\leq i\leq m\}$, and $\mu(z_1,\ldots,z_m)=(|z_1|^2,\ldots,|z_m|^2)$. The projection map $\zp\rightarrow P$ in the above diagram is a quotient map of the canonical $\mathbb{T}^m$-action on $\zp$, induced by the standard coordinatewise action of
\[
  \mathbb T^m=\{\mb z\in\C^m\colon|z_i|=1\quad\text{for }i=1,\ldots,m\}
\]
on~$\C^m$. Therefore, $\mathbb T^m$ acts on $\zp$ with an orbit space $P$ and $i_Z$ is a $\mathbb T^m$-equivariant embedding.
\end{definition}

Moreover, it was proved in~\cite{bu-pa00-2} that there exists a homotopy fibration of polyhedral products
$$
\zk\to(\mathbb{C}P^{\infty},\ast)^{K}\to BT^{m},
$$
where the first map is induced by a natural map of pairs $(\mathbb{D}^{2},\mathbb{S}^{1})\to(\mathbb{C}P^{1},\ast)$ and the second is induced by inclusion $(\mathbb{C}P^{\infty},\ast)\hookrightarrow (\mathbb{C}P^{\infty},\mathbb{C}P^{\infty})$.

Applying Eilenberg--Moore spectral sequence argument to the above homotopy fibration and analyzing topology of the polyhedral products involved, Baskakov, Buchstaber and Panov~\cite{BBP} obtained the next fundamental result, which links toric topology with combinatorial commutative algebra and, in particular, with homology theory of face rings. 

\begin{theorem}[{\cite[Theorem 4.5.4]{TT}}]\label{zkcoh}
Cohomology algebra of $\zk$ over a commutative ring with unit $\ko$ is given by isomorphisms
\[
\begin{aligned}
  H^{*}(\mathcal Z_K;\ko)\cong\Tor_{\ko[v_1,\ldots,v_m]}^{*,*}(\ko[K],\ko)\cong H^{*,*}\bigl[R(K),d\bigr]\cong \bigoplus\limits_{I\subset [m]}\widetilde{H}^{*}(K_{I};\ko),
\end{aligned}
\]
where the differential (multi)graded algebra  $R(K):=\ko[K]\otimes_{\ko}\Lambda[u_{1},\ldots,u_{m}]/(v_{i}^{2}=u_{i}v_{i}=0,1\leq i\leq m)$ and $d(u_i)=v_i, d(v_i)=0$. Here we denote by $\widetilde{H}^{*}(K_{I};\ko)$ reduced simplicial cohomology of a simplicial complex $K_{I}$ and set $\tilde{H}^{-1}(\varnothing;\ko)\cong\ko$. The last isomorphism above is a sum of isomorphisms 
$$
H^{p}(\zk;\ko)\cong\sum\limits_{I\subset [m]}\widetilde{H}^{p-|I|-1}(K_{I};\ko).
$$

In order to determine a product of two cohomology classes $\alpha=[a]\in\tilde{H}^{p}(K_{I_1};\ko)$ and $\beta=[b]\in\tilde{H}^{q}(K_{I_2};\ko)$, consider an embedding of simplicial complexes $i:\,K_{I_{1}\sqcup I_{2}}\rightarrow K_{I_1}*K_{I_2}$ and a canonical $\ko$-module isomorphism of cochains:
$$
j:\,\tilde{C}^{p}(K_{I_1};\ko)\otimes\tilde{C}^{q}(K_{I_2};\ko)\rightarrow\tilde{C}^{p+q+1}(K_{I_{1}}*K_{I_2};\ko).
$$
Then a product of the classes $\alpha$ and $\beta$ is given by:
$$
\alpha\cdot\beta=\begin{cases}
0,&\text{if $I_{1}\cap I_{2}\neq\varnothing$;}\\
i^{*}[j(a\otimes b)]\in\tilde{H}^{p+q+1}(K_{I_{1}\sqcup I_{2}};\ko),&\text{if $I_{1}\cap I_{2}=\varnothing$.}
\end{cases}
$$
\end{theorem}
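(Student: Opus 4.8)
The plan is to establish the four isomorphisms in turn, following the Eilenberg--Moore route indicated above: first the tautological identification with the Tor-algebra, then the reduction to the small model $R(K)$, then the topological computation via the fibration $\zk\to DJ(K)\to BT^m$, and finally the $\Z^m$-graded splitting that simultaneously yields the sum of induced-subcomplex cohomologies and the product rule.

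First I would recall that $\Lambda[u_1,\ldots,u_m]\otimes\ko[v_1,\ldots,v_m]$ with $d\,u_i=v_i$, $d\,v_i=0$ is the Koszul resolution of $\ko$ over $\ko[v_1,\ldots,v_m]$; tensoring it over $\ko[v_1,\ldots,v_m]$ with $\ko[K]$ produces the differential graded algebra $(\ko[K]\otimes\Lambda[u_1,\ldots,u_m],d)$ whose cohomology computes $\Tor_{\ko[v_1,\ldots,v_m]}^{*,*}(\ko[K],\ko)$, so the first isomorphism is a matter of definition. For the second I would exhibit the quotient map $\ko[K]\otimes\Lambda[u]\to R(K)=\ko[K]\otimes\Lambda[u]/(v_i^2=u_iv_i=0)$ as a quasi-isomorphism of differential graded algebras: using the $\Z^m$-multigrading one checks acyclicity of the kernel in each multidegree separately, where it splits into elementary two-term complexes whose differential is multiplication by some $v_i$ and which are manifestly acyclic, and one assembles an explicit contracting homotopy. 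This gives $\Tor_{\ko[v_1,\ldots,v_m]}^{*,*}(\ko[K],\ko)\cong H^{*,*}[R(K),d]$.

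Next comes the topological heart. It is standard (\cite{bu-pa00-2,DJ}) that $H^*(DJ(K);\ko)\cong\ko[K]$ as algebras and that $\pi^*\colon H^*(BT^m;\ko)=\ko[v_1,\ldots,v_m]\to\ko[K]$ is the quotient projection, so the Eilenberg--Moore spectral sequence of the fibration $\zk\to DJ(K)\to BT^m$ of \cite{bu-pa00-2} has
$$
E_2^{*,*}=\Tor_{\ko[v_1,\ldots,v_m]}^{*,*}(\ko[K],\ko)\ \Longrightarrow\ H^*(\zk;\ko).
$$
The crucial point is collapse at $E_2$ with no multiplicative extension problems. I would deduce this from the extra $\Z^m$-grading carried by every space in the fibration, hence by every page: the differentials $d_r$ ($r\geq2$) must preserve the multidegree while changing the Koszul homological degree, and a bidegree count against the multigrading forces $d_r=0$ and likewise excludes nontrivial extensions; alternatively one compares $R(K)$ directly with the cellular cochain complex of the natural CW structure on $\zk$ and shows these differential graded algebras are quasi-isomorphic. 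Either way, $H^*(\zk;\ko)\cong H^{*,*}[R(K),d]$ as algebras.

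Finally I would split $R(K)$ by $\Z^m_{\geq0}$-multidegree: since $v_i^2=u_iv_i=u_i^2=0$, the component in multidegree $2\mathbf a$ vanishes unless $\mathbf a\in\{0,1\}^m$, and for $\mathbf a=\mathbf I$ with support $I$ the monomials $v_\sigma u_{I\setminus\sigma}$ (with $\sigma\in K_I$) match the faces $\sigma$ of $K_I$ in such a way that the multidegree-$2\mathbf I$ part of $(R(K),d)$ becomes the reduced simplicial cochain complex of $K_I$ up to a degree shift, with cohomology $\widetilde H^{|I|-i-1}(K_I;\ko)$ exactly as in Hochster's Theorem~\ref{Hochtheorem}; summing over $I\subset[m]$ and rewriting the Koszul bidegree as the single topological degree $p$ yields $H^p(\zk;\ko)\cong\bigoplus_{I\subset[m]}\widetilde H^{p-|I|-1}(K_I;\ko)$. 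For the product, representatives of $\alpha\in\widetilde H^{p}(K_{I_1})$ and $\beta\in\widetilde H^{q}(K_{I_2})$ are monomials supported on $I_1$ and $I_2$: if $I_1\cap I_2\neq\varnothing$ their product contains $u_iu_i$, $u_iv_i$ or $v_i^2$ for some $i$ and vanishes in $R(K)$, whereas if $I_1\cap I_2=\varnothing$ the product lies in the $(I_1\sqcup I_2)$-component and, under the above dictionary, monomial multiplication becomes precisely the simplicial join pairing $j$ followed by the restriction $i^*$ along $K_{I_1\sqcup I_2}\hookrightarrow K_{I_1}*K_{I_2}$ --- a direct check of indices and signs. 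I expect the collapse of the Eilenberg--Moore spectral sequence, together with the absence of multiplicative extension problems, to be the genuine obstacle: that is the step where the geometry of $\zk$ (its cell structure, or the rigidity imposed by the torus multigrading) is essential, while the algebraic quasi-isomorphism $\ko[K]\otimes\Lambda[u]\simeq R(K)$, the Hochster splitting and the product computation are all routine once the $\Z^m$-grading is exploited.
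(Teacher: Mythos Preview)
The paper does not give its own proof of this theorem: it is a survey and simply cites the result from \cite{BBP} and \cite[Theorem~4.5.4]{TT}, prefacing the statement with the one-line hint that the result is obtained ``applying Eilenberg--Moore spectral sequence argument to the above homotopy fibration and analyzing topology of the polyhedral products involved.'' Your proposal is precisely an expansion of that hint --- Koszul resolution for the Tor identification, the quasi-isomorphism to the small model $R(K)$, collapse of the Eilenberg--Moore spectral sequence for $\zk\to DJ(K)\to BT^m$, and the $\Z^m$-multigraded Hochster splitting with the join description of products --- and this is exactly the route taken in the cited sources \cite{BBP,TT}. So there is nothing to compare: your outline matches the intended argument, and your identification of the spectral-sequence collapse (equivalently, the comparison with the cellular cochains of $\zk$) as the substantive step is accurate.
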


It turned out that Golodness of a face ring $\ko[K]$ is closely related to the case when $\zk$ is homotopy equivalent to a wedge of spheres; by Theorem~\ref{zkcoh}, the latter implies the former. However, the opposite statement is not true, see~\cite{G-P-T-W,L2015,I-Y}. 

Using methods of stable homotopy theory and toric topology such as the stable homotopy decomposition of polyhedral products due to Bahri, Bendersky, Cohen, and Gitler~\cite{BBCG10,BBCG14,BBCG15} and the fat wedge filtration method due to Iriye and Kishimoto~\cite{IK14}, several authors, among which are Grbi\'c and Theriault~\cite{GT}, Iriye and Kishimoto~\cite{IK}, Grbi\'c, Panov, Theriault, and Wu~\cite{G-P-T-W}, were able to identify a number of important classes of simplicial complexes $K$ for which $\ko[K]$ is a Golod ring over any field $\ko$. In the latter case $K$ is called a \emph{Golod simplicial complex}. For a detailed exposition on this problem we refer the reader to a survey article by Grbi\'c and Theriault~\cite{GT2}.

By definition, Golodness of $\ko[K]$ implies triviality of multiplication in its Koszul homology $H_{+}(K_{\ko[K]})$, or equivalently, $\mathrm{cup}(\zk)=1$, where $\mathrm{cup}(X)$ denotes \emph{cohomology length} of a space $X$, that is, the maximal number of classes of positive dimension in $H^*(X;\ko)$ having a nonzero product. Another related area of research, which also attracts much attention from the scholars these days, emerged from a false claim made in~\cite{BJ}: Golodness of $K$ is equivalent to $\mathrm{cup}(\zk)=1$. The first counterexample was constructed by Katth\"an in 2015. More precisely, he proved the next result.     

\begin{theorem}[{\cite{Kat}}]
The following statements hold.
\begin{itemize}
\item[(1)] If $\dim K\leq 3$, then $\mathrm{cup}(\zk)=1$ $\Longleftrightarrow$ $K$ is a Golod simplicial complex;
\item[(2)] There exists a 4-dimensional simplicial complex $K$ such that
\begin{itemize}
\item[(a)] $\mathrm{cup}(\zk)=1$;
\item[(b)] There is a non-trivial triple Massey product $\langle\alpha_{1},\alpha_{2},\alpha_{3}\rangle\subset H^*(\zk)$; therefore, $K$ is not a Golod complex.
\end{itemize}
\end{itemize}
\end{theorem}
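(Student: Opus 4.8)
The two parts pull in opposite directions — (1) is a structural vanishing statement, (2) a delicate construction — so I would handle them separately. For (1), the implication ``$K$ Golod $\Rightarrow\mathrm{cup}(\zk)=1$'' is immediate: by Theorem~\ref{zkcoh} the Koszul homology $H_+(K_{\ko[K]})$ is $H^+(\zk)$, and Golodness of $\ko[K]$ in particular kills its multiplication. For the converse I would exploit the $\Z^m$-multigrading. In the model $R(K)$ of Theorem~\ref{zkcoh} the differential preserves the $\Z^m$-multidegree, and $H^*(\zk)$ is concentrated in the squarefree multidegrees $2\mathbf I$, $I\subseteq[m]$. Hence a defining system for any Massey product $\langle\alpha_1,\dots,\alpha_n\rangle$ of classes $\alpha_i$ supported on subsets $I_i\subseteq[m]$ may be chosen multihomogeneous, with the entry $a(i,j)$ of multidegree $2(\mathbf I_i+\dots+\mathbf I_j)$; the value then sits in multidegree $2(\mathbf I_1+\dots+\mathbf I_n)$, which is non-squarefree — so the product is trivial — unless the $I_i$ are pairwise disjoint. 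When they are disjoint, writing $\alpha_i\in\widetilde H^{q_i}(K_{I_i})$ (so $\alpha_i\in H^{|I_i|+q_i+1}(\zk)$ by Theorem~\ref{Hochtheorem}), a one-line degree count shows the value of $\langle\alpha_1,\dots,\alpha_n\rangle$ lies in $\widetilde H^{\,q_1+\dots+q_n+1}(K_{I_1\sqcup\dots\sqcup I_n})$, independently of $n$.

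Since $\dim K_{I_1\sqcup\dots\sqcup I_n}\le\dim K\le 3$, a non-trivial value forces $q_1+\dots+q_n\le 2$; and since $\mathrm{cup}(\zk)=1$ already kills every product of positive-degree classes, the task reduces to exhibiting, for each defined higher Massey product, a defining system whose related cocycle $c(A)$ is a coboundary. By the standard fact recalled earlier in this survey — every proper sub-product of a defined Massey product is defined and trivial — one inducts on $n$ and is left with a short list of admissible profile shapes with $q_1+\dots+q_n\le 2$: up to the obvious symmetry $(0,0,0),(1,0,0),(2,0,0),(1,1,0)$ for $n=3$, and for larger $n$ only profiles with at most two nonzero $q_i$. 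In each case I would choose the intermediate cochains $a(i,i{+}1),\dots$ in the preferred form dictated by the explicit join description of the product in Theorem~\ref{zkcoh} (the vanishing of $\alpha_i\alpha_{i+1}$ means the join cochain representing their product becomes a coboundary after restriction to the relevant induced subcomplex), and then verify directly in $R(K)$ that $c(A)$ can be made a coboundary. This last verification is the crux: the groups $\widetilde H^{\le 3}(K_J)$ need not vanish, so the argument genuinely uses $\dim K\le 3$ — concretely, that the low-degree cochains of $R(K)$ in a fixed squarefree multidegree involve only monomials with a very small $v$-part, which pins $c(A)$ down enough to write an explicit bounding cochain.

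For (2) the same degree formula is the design principle: a non-trivial triple product needs $q_1+q_2+q_3=\dim K-1$, so in dimension $4$ one must take $q_1+q_2+q_3=3$, most symmetrically $q_i=1$ for each $i$. I would therefore look for $K$ on a vertex set of size $\ge 9$ (disjoint supports force this) containing three disjoint ``empty triangles'' $I_1,I_2,I_3$ — each $K_{I_i}$ a boundary of a $2$-simplex, giving $\alpha_i\in\widetilde H^1(K_{I_i})\subseteq H^5(\zk)$ — arranged so that (a) all remaining cohomology of $\zk$ is meagre enough that every product of positive classes vanishes, hence $\mathrm{cup}(\zk)=1$, while (b) $K_{I_1\sqcup I_2\sqcup I_3}$ is $4$-dimensional and the related cocycle of a defining system for $\langle\alpha_1,\alpha_2,\alpha_3\rangle$ represents a nonzero class of its $\widetilde H^4$. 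A natural top-down construction: start from the $4$-skeleton of a simplex on the chosen vertices, delete the three triangles together with all faces containing them (this creates the empty triangles and, by shrinking small induced subcomplexes, already annihilates most products), then delete a carefully chosen further family of $3$- and $4$-faces to kill the products that still survive, without resurrecting any empty triangle, destroying any $\alpha_i$, or killing the distinguished class of $\widetilde H^4(K_{I_1\sqcup I_2\sqcup I_3})$. The heart of the matter — and the reason the false claim of~\cite{BJ} was believable — is exactly this tension: the deletions needed to enforce $\mathrm{cup}(\zk)=1$ pull against those that must be avoided to keep $\langle\alpha_1,\alpha_2,\alpha_3\rangle$ nonzero, so reconciling them requires an explicit complex. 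Both conclusions are then confirmed by a finite computation in the $R(K)$-model of Theorem~\ref{zkcoh}: one lists the few products that could be nonzero and checks they are not, and one writes down a defining system whose related cocycle is not a coboundary (equivalently, by Theorem~\ref{Hochtheorem}, a nonzero element of $\widetilde H^4$ of an induced subcomplex).
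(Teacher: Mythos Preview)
The paper does not prove this theorem: it is quoted verbatim from Katth\"an~\cite{Kat} as an external result, with no argument given. There is therefore nothing in the paper to compare your proposal against.

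On its own merits, your outline identifies the right invariants --- the squarefree multigrading on $R(K)$, the Hochster degree count placing $\langle\alpha_1,\dots,\alpha_n\rangle$ in $\widetilde H^{\,q_1+\cdots+q_n+1}(K_{I_1\sqcup\cdots\sqcup I_n})$, and the resulting bound $\sum q_i\le 2$ when $\dim K\le 3$ --- but it stops precisely where the content lies. In part~(1) you reduce to a list of profiles and then write ``verify directly in $R(K)$ that $c(A)$ can be made a coboundary''; you yourself call this the crux, and it is not carried out. The difficulty is real: $\widetilde H^{\le 3}(K_J)$ need not vanish, the indeterminacy subgroup may be nonzero, and showing that \emph{some} defining system gives a coboundary requires an actual cochain-level argument, not a profile census. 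In part~(2) you describe a plausible search space (three disjoint empty triangles, then delete faces to kill cup products while protecting a class in $\widetilde H^4$) but produce no complex and no verification; the whole point of Katth\"an's contribution is that such a $K$ is not obvious, and an explicit example with both properties checked is the theorem. As written, both parts are programmes rather than proofs.
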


Since that time there have already appeared several papers devoted to identification of the class of simplicial complexes $K$ for which Golodness of $K$ is equivalent to triviality of multiplication in $H^{+}(\zk)$, that is, to $\mathrm{cup}(\zk)=1$. Such simplicial complexes and face rings are refered to as {\emph{quasi-Golod}}, that is the case when there are no non-trivial triple or higher order Massey products in their Koszul homology. 

To state the most general known result characterizing a class of Golod (and quasi-Golod) simplicial complexes and face rings we need the following two notions. A simplicial complex $K$ is called \emph{flag} if all its minimal non-faces (with respect to inclusion relation) have exactly two vertices.
A simple graph $\Gamma$ is \emph{chordal} if it does not have any induced cycles of length greater than three.

\begin{theorem}[{\cite{G-P-T-W,P-V}}]
Let $K$ be a flag simplicial complex. 
The following statements are equivalent:
\begin{itemize}
\item[(1)] The 1-skeleton $\sk^{1}(K)$ of $K$ is a chordal graph;
\item[(2)] $\mathrm{cup}(\zk)=1$;
\item[(3)] $\zk$ is homotopy equivalent to a wedge of spheres;
\item[(4)] $K$ is a Golod complex;
\item[(5)] Commutator subgroup $\pi_{1}(\mathcal R_K)=\RC_{K}^{'}$ of the right-angled Coxeter group $\RC_{K}$ is a free group;    
\item[(6)] Associated graded Lie algebra $\gr(\RC_{K}^{'})$ is free.
\end{itemize}
\end{theorem}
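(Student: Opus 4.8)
The plan is to establish all six equivalences through two chains of implications sharing statement~(1): the ``topological'' chain $(3)\Rightarrow(4)\Rightarrow(2)\Rightarrow(1)\Rightarrow(3)$ and the ``group\nobreakdash-theoretic'' chain $(1)\Rightarrow(5)\Rightarrow(6)\Rightarrow(1)$. Throughout one uses that, $K$ being flag, it is the clique complex of $\Gamma=\sk^{1}(K)$ and each induced subcomplex $K_{I}$ is the clique complex of the induced subgraph $\Gamma_{I}$, so chordality passes to all induced subgraphs.

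For the easy directions I would argue as follows. A wedge of spheres is a co-$H$-space, hence formal, so its cohomology has vanishing products and all (matric) Massey products contain $0$; since that cohomology is $H_{*}(K_{\ko[K]})$ by Theorem~\ref{zkcoh}, Golod's criterion Theorem~\ref{golod} gives $(3)\Rightarrow(4)$. The implication $(4)\Rightarrow(2)$ is immediate, as Golodness kills in particular the multiplication in $H^{+}(K_{\ko[K]})\cong H^{+}(\zk)$, i.e.\ $\mathrm{cup}(\zk)=1$. For $(2)\Rightarrow(1)$ I would prove the contrapositive: if $\Gamma$ is not chordal it has an induced cycle on a vertex set $I$ with $|I|=n\ge 4$; this cycle has no chords, so $K_{I}$ is the cycle graph and $K_{I}\simeq S^{1}$. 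Writing the cyclically ordered vertices as $v_{1},\dots,v_{n}$, set $I_{1}=\{v_{1},v_{3}\}$ and $I_{2}=I\setminus I_{1}$. Then $I_{1}$ is a non-edge, so $K_{I_{1}}\cong S^{0}$; in $K_{I_{2}}$ the vertex $v_{2}$ is isolated while $v_{4},\dots,v_{n}$ span a path, so $K_{I_{2}}$ is a disjoint union of a point and a contractible complex and $K_{I_{2}}\simeq S^{0}$. Hence $K_{I_{1}}*K_{I_{2}}\simeq S^{0}*S^{0}\simeq S^{1}$, and tracing the cycle $K_{I}=K_{I_{1}\sqcup I_{2}}$ through the unreduced suspension $K_{I_{1}}*K_{I_{2}}$ shows the inclusion induces an isomorphism on $\widetilde H^{1}$. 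Since $I_{1}\cap I_{2}=\varnothing$, the product formula in Theorem~\ref{zkcoh} then shows the generators of $\widetilde H^{0}(K_{I_{1}})\subset H^{3}(\zk)$ and $\widetilde H^{0}(K_{I_{2}})\subset H^{n-1}(\zk)$ have nonzero product in $\widetilde H^{1}(K_{I})\subset H^{n+2}(\zk)$, so $\mathrm{cup}(\zk)\ge 2$.

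The main obstacle is $(1)\Rightarrow(3)$. Here I would use that a chordal graph has a perfect elimination ordering $v_{1},\dots,v_{m}$, so $v_{1}$ is a simplicial vertex of $K$ (equivalently $\mathrm{lk}_{K}(v_{1})$ is a simplex, hence contractible) and $K\setminus\{v_{1}\}$ is again a chordal flag complex with a perfect elimination ordering. The technical heart is the homotopy decomposition of \cite{G-P-T-W}: when $v$ is a simplicial vertex of a flag complex, $\zk$ is homotopy equivalent to a wedge of $\mathcal Z_{K\setminus\{v\}}$ with an explicit wedge of spheres, the spheres being accounted for by the reduced homology of the induced subcomplexes on sets containing $v$ (all homotopy-discrete under chordality). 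Iterating along the perfect elimination ordering, and using that $\mathcal Z_{\{v_{m}\}}=D^{2}$ is contractible, presents $\zk$ as a wedge of spheres. Proving this decomposition — in particular showing that the stable splitting of $\zk$ from \cite{BBCG10} desuspends, which rests on the fat-wedge-filtration method of \cite{IK} — is where essentially all of the work lies, and it is the one place where flagness (through chordality) is genuinely needed, since Golodness alone does not force $\zk$ to be a wedge of spheres.

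Finally I would treat the group-theoretic chain. For flag $K$ the real moment-angle complex $\mathcal R_K$ is aspherical (\cite{DJ}) with $\pi_{1}(\mathcal R_K)\cong\RC_{K}^{'}$ (\cite{P-V}), so $\mathcal R_K\simeq K(\RC_{K}^{'},1)$, and there is a Hochster-type isomorphism $H^{n}(\mathcal R_K;\ko)\cong\bigoplus_{I\subseteq[m]}\widetilde H^{n-1}(K_{I};\ko)$. For $(1)\Rightarrow(5)$: a chordal graph is dismantlable (a simplicial vertex is dominated by each of its neighbours), so every induced subcomplex $K_{I}$ is homotopy equivalent to a discrete set and $\widetilde H^{\ge 1}(K_{I})=0$; hence $H^{\ge 2}(\mathcal R_K)=0$, which, given that the universal cover of $\mathcal R_K$ is a CAT(0) cube complex, forces $\mathrm{cd}(\RC_{K}^{'})\le 1$ and thus $\RC_{K}^{'}$ free by Stallings--Swan (this is the main result of \cite{P-V}). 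The implication $(5)\Rightarrow(6)$ is the classical fact that the lower-central-series associated graded Lie algebra of a free group is free. And $(6)\Rightarrow(1)$ I would again prove contrapositively: an induced cycle of length $\ge 4$ in $\Gamma$ yields a relation in the quadratic presentation of $\gr(\RC_{K}^{'})$ obstructing freeness (a direct Lie-algebra computation, cf.\ \cite{P-V}). Together these close both chains and give all the equivalences.
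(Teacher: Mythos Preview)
The paper is a survey and does not prove this theorem; it is stated with attribution to \cite{G-P-T-W,P-V} and no argument is given in the text. There is therefore nothing to compare your proposal against on the level of proof details.

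That said, your sketch follows the architecture of the cited sources rather faithfully: the cycle $(3)\Rightarrow(4)\Rightarrow(2)\Rightarrow(1)\Rightarrow(3)$ with the hard step $(1)\Rightarrow(3)$ coming from the perfect-elimination/simplicial-vertex induction of \cite{G-P-T-W} (backed by the fat-wedge filtration desuspension of \cite{IK}), and the group-theoretic cycle $(1)\Leftrightarrow(5)\Leftrightarrow(6)$ via the asphericity of $\mathcal R_K$ for flag $K$ and the Panov--Veryovkin analysis in \cite{P-V}. Your contrapositive for $(2)\Rightarrow(1)$ --- splitting an induced $n$-cycle into a non-edge and its complement and reading off a nontrivial cup product via Theorem~\ref{zkcoh} --- is the standard argument. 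The only place to be careful is in the precise form of the splitting you invoke for $(1)\Rightarrow(3)$: what \cite{G-P-T-W} actually proves is that removing a simplicial vertex yields a homotopy equivalence between $\zk$ and a wedge of iterated suspensions of $\mathcal Z_{K\setminus\{v\}}$ and its full subcomplexes, and one then iterates; your phrasing ``a wedge of $\mathcal Z_{K\setminus\{v\}}$ with an explicit wedge of spheres'' slightly understates this, but the induction still closes since all the pieces become wedges of spheres at the end.
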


There is an extremely important class of topological spaces arising in homotopy theory, for which the rational homotopy type is determined by the singular cohomology ring of a space (over rationals). Namely, a space $X$ is called \emph{rationally formal} if its Sullivan-de Rham algebra $[A,d]$ of PL-forms with coefficients in $\mathbb Q$ is formal in the category of commutative differential graded algebras (CDGA), i.e., there exists a zigzag of quasi-isomorphisms (weak equivalence) between $[A,d]$ and its cohomology algebra $[H^{*}(A),0]$. 

Several important classes of topological spaces and smooth manifolds were proved to be formal, among them are spheres, H-spaces, Eilenberg--MacLane spaces $K(\pi,n)$ for $n>1$, compact connected Lie groups $G$ and their classifying spaces $BG$. Moreover, formality is preserved by wedges, direct products, and connected sums. 

Polyhedral products of the type $(X,\ast)^{K}$ are formal spaces (over $\ko=\mathbb Q$), provided the space $X$ is formal (over $\mathbb Q$), see~\cite[Chapter 8]{TT}. In particular, Davis--Januszkiewicz spaces $\mathrm{DJ}(K)$ are formal for any simplicial complex $K$.

It is not hard to see that formality of $X$ implies all triple and higher Massey products vanish in $H^{*}(X)$; therefore, Massey products serve as an obstruction to formality of a differential graded algebra, or to that of a topological space. The next section is devoted to the case, when there exist non-trivial Massey products in (integral) cohomology of moment-angle-complexes and moment-angle manifolds.


\section{Massey products in Toric Topology}

Since for any simplicial complex $K$ its moment-angle-complex $\zk$ is 2-connected, the lowest possible dimension of cohomology classes in $H^{+}(\zk)$, which can form a non-trivial Massey product is equal to three. 
Note that by a result of Halperin and Stasheff~\cite{HS}, if a space $X$ is formal, then $H^*(X)$ is generated by spherical classes. 

Although all 3-dimensional classes in $H^*(\zk)$ are spherical, which can be easily seen from Theorem~\ref{zkcoh}, there exists a wide class of simplicial complexes $K$ and simple polytopes $P$ such that $H^*(\zk)$ and $H^*(\zp)$ contain non-trivial triple Massey products of 3-dimensional classes. The case of non-trivial triple Massey products of 3-dimensional classes in $H^*(\zk)$ was analyzed by Denham and Suciu~\cite{D-S} (strictly defined products) and recently by Grbi\'c and Linton~\cite{GL1} (products with non-zero indeterminacy). 
When a simple polytope $P$ is a graph-associahedron, necessary and sufficient conditions for $H^*(\zp)$ to contain a non-trivial triple Massey product of 3-dimensional classes were obtained by the first author in~\cite{L2017}.

The first example of a non-trivial triple Massey product in cohomology of a 
polyhedral product was given by Baskakov~\cite{BaskM}. His construction was later generalized to higher order Massey products by the first author~\cite{L2016,L2017,L2019}, by Buchstaber and Limonchenko~\cite{BL}, who developed a theory of {\emph{direct families of polytopes with non-trivial Massey products}} (it is beyond the scope of this survey), and recently by Grbi\'c and Linton~\cite{GL2}. The current section is devoted to a discussion of the above mentioned results.

At first, let us consider the case of a non-trivial triple Massey product of 3-dimensional classes in $H^*(\zk)$. Recall that by Theorem~\ref{zkcoh}, a 3-dimensional element in $H^*(\zk)$ corresponds to a pair of vertices of $K$ not connected by an edge, that is, to an induced subcomplex $K_I$ in $K$, where $|I|=2$ and $I\notin K$.

In fact, the next criterion shows that non-trivial triple Massey products
$\langle\alpha_{1},\alpha_{2},\alpha_{3}\rangle\subset\tilde{H}^*(\zk)$ with $\dim\alpha_{i}=3,1\leq i\leq 3$ are determined by the \emph{graph} (i.e., the 1-skeleton) $\sk^{1}(K)$ of the simplicial complex $K$.  

\begin{figure}[h]
\includegraphics[scale=0.5]{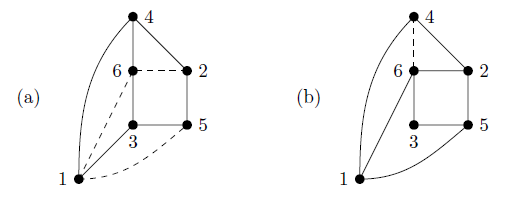}
\caption{Non-trivial triple Massey products of 3-dimensional classes in $H^*(\zk)$}\label{Masseyfig}
\end{figure}

\begin{theorem}[{\cite{D-S,GL1}}]\label{tripleMassey}
Let $\langle\alpha_{1},\alpha_{2},\alpha_{3}\rangle\subset\tilde{H}^*(\zk)$ with $\dim\alpha_{i}=3,1\leq i\leq 3$ be a defined Massey product. Then
\begin{itemize}
\item[(a)] $\langle\alpha_{1},\alpha_{2},\alpha_{3}\rangle$ is non-trivial and strictly defined if and only if $K$ contains one of the six different induced subgraphs described in Figure~\ref{Masseyfig}~(a) and $\alpha_{i}$ is a generator of the group $\tilde{H}^{0}(K_{\{i,i+1\}})$ for $1\leq i\leq 3$, cf. Theorem~\ref{zkcoh};
\item[(b)] $\langle\alpha_{1},\alpha_{2},\alpha_{3}\rangle$ is non-trivial and has non-zero indeterminacy if and only if $K$ contains one of the two different induced subgraphs described in Figure~\ref{Masseyfig}~(b) and $\alpha_{i}$ is a generator of the group $\tilde{H}^{0}(K_{\{i,i+1\}})$ for $1\leq i\leq 3$, cf. Theorem~\ref{zkcoh}.
\end{itemize}
\end{theorem}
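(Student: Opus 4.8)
The plan is to push everything down to the induced subcomplex of $K$ on the six vertices that carry the three classes, to run an explicit computation in the algebraic model $R(K)$ of Theorem~\ref{zkcoh}, and then to match the outcome against a finite list of graphs.

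\emph{Step 1 (localisation to a graph on six vertices).} Write each $\alpha_i$ as a generator of $\widetilde H^0(K_{I_i};\ko)\cong\ko$, where $I_i$ is a two-element non-face of $K$. Cup products and Massey products on $H^*(\zk)$ respect the $\Z^m$-multigrading, and by Theorem~\ref{Hochtheorem} that grading is concentrated in squarefree degrees; choosing cochain representatives and a defining system of pure multidegree, the related cocycle has multidegree $\sum_i 2\mathbf{I_i}$, so when the $I_i$ are not pairwise disjoint it lies in a zero group and $\langle\alpha_1,\alpha_2,\alpha_3\rangle$ is trivial. Hence we may assume the six vertices $V:=I_1\sqcup I_2\sqcup I_3$ are distinct, and the same pure-multidegree argument confines the entire defining system to the induced subcomplex $K_V$, so that $\langle\alpha_1,\alpha_2,\alpha_3\rangle\subset\widetilde H^1(K_V;\ko)$. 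Since all the cochains in play are supported on $1$-cells and the inclusion $\sk^1 K_V\hookrightarrow K_V$ induces an injection $\widetilde H^1(K_V)\hookrightarrow\widetilde H^1(\sk^1 K_V)$, the product is governed entirely by the graph $\Gamma:=\sk^1 K_V$; we are thus reduced to analysing $\langle\alpha_1,\alpha_2,\alpha_3\rangle$ inside the cycle space $\widetilde H^1(\Gamma)$ of a triangle-free graph on six vertices carrying the three marked non-edges $I_1,I_2,I_3$.

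\emph{Step 2 (a defining system and its value).} By Theorem~\ref{zkcoh}, $\langle\alpha_1,\alpha_2,\alpha_3\rangle$ is defined precisely when $\alpha_1\alpha_2=0$ and $\alpha_2\alpha_3=0$; the explicit cup product there identifies $\alpha_i\alpha_{i+1}$ with the restriction to $K_{I_i\cup I_{i+1}}$ of the generator of $\widetilde H^1(K_{I_i}*K_{I_{i+1}})\cong\widetilde H^1(S^1)$, so it vanishes iff the bipartite graph of $\Gamma$ between $I_i$ and $I_{i+1}$ is a proper subgraph of $K_{2,2}$. Granting this, put $I_i=\{p_i,q_i\}$ and, in $R(\Gamma)$, take $\zeta_i=v_{p_i}u_{q_i}$ as a cocycle representative of $\alpha_i$ (closed since $v_{p_i}v_{q_i}=0$); choose degree-$5$ elements $f_{12},f_{23}$ of pure multidegree, supported on $I_1\cup I_2$ and $I_2\cup I_3$ respectively, with $df_{12}=\zeta_1\zeta_2$ and $df_{23}=\zeta_2\zeta_3$ — explicit ``path fillings'' running along the crossing edges that are present between the two pairs — and form the related cocycle $c=\zeta_1f_{23}+f_{12}\zeta_3$ (the sign conventions simplifying in degree $3$). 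One then checks that $[c]\in\widetilde H^1(\Gamma)$ is the class of an explicit $1$-cycle of $\Gamma$ running through all three pairs, and that it is non-zero exactly when that cycle is essential in $\Gamma$.

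\emph{Step 3 (indeterminacy and enumeration).} Replacing $f_{12}$ by $f_{12}+z$ with $z$ a $5$-cocycle of the same multidegree changes $[c]$ by $[z]\cdot\alpha_3$ with $[z]\in\widetilde H^0(K_{I_1\cup I_2})$, and symmetrically in $f_{23}$; hence the indeterminacy is $\widetilde H^0(K_{I_1\cup I_2})\cdot\alpha_3+\alpha_1\cdot\widetilde H^0(K_{I_2\cup I_3})$ inside $\widetilde H^1(\Gamma)$, and one more application of the product formula tells exactly when it is zero. Since a graph $\Gamma$ as in Step~1 is determined by the three bipartite pieces between the pairs (with the first two being proper subgraphs of $K_{2,2}$), there are only finitely many; running through them and recording those with $[c]\ne 0$ gives, for zero indeterminacy, up to the evident symmetries exactly the six induced subgraphs of Figure~\ref{Masseyfig}(a), and, for non-zero indeterminacy still missing $0$, exactly the two of Figure~\ref{Masseyfig}(b). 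Conversely, for each of these eight graphs the defining system above realises the stated Massey product, with the generators $\alpha_i$ forced as in the statement.

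\emph{Expected main obstacle.} Steps~1 and~2 are essentially bookkeeping; the real content is Step~3, and the delicate point is the indeterminacy: ``strictly defined'' is \emph{not} the same as ``$K_{I_i\cup I_{i+1}}$ connected'', because $\widetilde H^0(K_{I_i\cup I_{i+1}})\cdot\alpha$ can vanish even when $\widetilde H^0\ne 0$ (the relevant cup product, computed by the join formula, may be zero), so separating cases (a) and (b) requires a genuine computation rather than a connectivity count. One must also make the enumeration exhaustive — in particular rule out that an induced subcomplex with extra $2$-faces, where $K_V$ properly contains $\Gamma$, could produce a new non-trivial product; this is precisely what the injection $\widetilde H^1(K_V)\hookrightarrow\widetilde H^1(\Gamma)$ of Step~1 is for, reducing the whole matter to the graph-theoretic count. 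I expect the exact description of when the indeterminacy is non-zero, hence the separation of (a) from (b), to be the hardest ingredient.
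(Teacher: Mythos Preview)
The paper does not prove this theorem at all: it is stated with attribution to \cite{D-S} and \cite{GL1}, and the only commentary is the historical remark that follows, crediting part~(a) to Denham--Suciu and part~(b) to Grbi\'c--Linton. There is therefore no proof in the paper to compare your proposal against.

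That said, your outline is essentially the strategy of the cited references: localise via the multigrading to the induced subcomplex on the six vertices carrying the three non-edges, compute an explicit defining system in $R(K)$, and enumerate the finitely many admissible bipartite configurations, separating the strictly-defined cases from those with non-zero indeterminacy by computing the indeterminacy subgroup directly. Two small points are worth tightening. First, your claim that $\Gamma=\sk^1 K_V$ is triangle-free is not automatic: a triangle on one vertex from each pair is perfectly possible, and in that case $K_V$ may or may not contain the corresponding $2$-face. Your injection $\widetilde H^1(K_V)\hookrightarrow\widetilde H^1(\Gamma)$ handles non-triviality regardless, but you should either drop the triangle-free assertion or justify why, after imposing definedness and non-triviality, the surviving graphs happen to be triangle-free. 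Second, the enumeration ``up to the evident symmetries'' hides a case analysis that the referenced papers carry out carefully; be sure you account for the symmetry that swaps $\alpha_1\leftrightarrow\alpha_3$ but not the one that would swap $\alpha_2$ with an outer class, since the roles of the middle and outer factors in a triple product are genuinely different.
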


\begin{remark}
Statement (a) was proved firstly by Denham and Suciu as~\cite[Theorem 6.1.1]{D-S}. However, it was asserted there that any non-trivial triple Massey product of 3-dimensional classes in $H^*(\zk)$ has such a form; the additional two graphs of Figure~\ref{Masseyfig}~(b) were found and the statement (b) above was proved by Grbi\'c and Linton~\cite{GL1}. The result of~\cite{GL1} provided us with a first example of a non-strictly defined non-trivial Massey product in cohomology of a moment-angle-complex. 
\end{remark}

Now we turn to a discussion of higher order non-trivial Massey products in Koszul homology of a Stanley--Reisner ring $H_{*}(K_{\ko[K]})\cong H^*(\zk;\ko)$. Following~\cite{L2019}, we set up the notation as indicated below.

Let us fix a set of induced subcomplexes $K_{I_j}$ in $K$ on pairwisely disjoint subsets of vertices $I_j\subset [m]$ for $1\leq j\leq k$ and their cohomology classes $\alpha_{j}\in\tilde{H}^{d(j)}(K_{I_j})$ of certain dimensions $d(j)\geq 0$ for $1\leq j\leq k$. If an $s$-fold Massey product ($s\leq k$) of consecutive classes $\langle\alpha_{i+1},\ldots,\alpha_{i+s}\rangle$ for $1\leq i+1<i+s\leq k$ is defined, then $\langle\alpha_{i+1},\ldots,\alpha_{i+s}\rangle$ is a subset of
$\tilde{H}^{d(i+1,i+s)}(K_{I_{i+1}\sqcup\ldots\sqcup {I_{i+s}}})$,
where $d(i+1,i+s):=d(i+1)+\ldots+d(i+s)+1$. 

Our goal is to determine the conditions sufficient for a Massey product $\langle\alpha_{1},\ldots,\alpha_{k}\rangle$ of cohomology classes introduced above to be defined and, furthermore, to be strictly defined. 

We are going to present a complete proof of the following theorem, in which statements (1) and (2) were originally proved by the first author as~\cite[Lemma 3.3]{L2019}. Alongside with statement (3), this result provides an effective tool to determine non-trivial $k$-fold Massey products for $k\geq 3$ in $H^*(\zk)$ for a given simplicial complex $K$, when one knows {\emph{multigraded}} (or, \emph{algebraic}) \emph{Betti numbers} of $K$ (i.e., the dimensions of the (multi)graded components of the Tor-module of $\ko[K]$) and the combinatorial structure of the corresponding induced subcomplex $K_{I_{1}\sqcup\ldots\sqcup {I_k}}$.
 
\begin{theorem}\label{mainlemma}
Let $k\geq 3$. Then 
\begin{itemize}
\item[(1)] If $\tilde{H}^{d(s,r+s)}(K_{I_{s}\sqcup\ldots\sqcup{I_{r+s}}})=0,1\leq s\leq k-r,1\leq r\leq k-2$, then the $k$-fold Massey product $\langle\alpha_{1},\ldots,\alpha_{k}\rangle$ is defined;
\item[(2)] If $\tilde{H}^{d(s,r+s)-1}(K_{I_{s}\sqcup\ldots\sqcup{I_{r+s}}})=0,1\leq s\leq k-r,1\leq r\leq k-2$ and the $k$-fold Massey product $\langle\alpha_{1},\ldots,\alpha_{k}\rangle$ is defined, then the $k$-fold Massey product $\langle\alpha_{1},\ldots,\alpha_{k}\rangle$ is strictly defined.
\item[(3)] In the latter case, there exists a defining system $C=(c_{i,j})_{i,j=1}^{k+1}$ for $\langle\alpha_{1},\ldots,\alpha_{k}\rangle$ such that 
$$
c_{s,r+s+1}\in C^{d(s,r+s)-1}(K_{I_{s}\sqcup\ldots\sqcup {I_{r+s}}}),
1\leq s\leq k-r,1\leq r\leq k-2
$$
and
$$
\langle\alpha_{1},\ldots,\alpha_{k}\rangle=\{[a(C)]\}\in H^{d(1,k)}(K_{I_{1}\sqcup\ldots\sqcup {I_{k}}}),\quad a(C)=-\sum\limits_{j=1}^{k-1}\bar{c}_{1,1+j}\wedge c_{1+j,k+1},
$$ 
in the differential graded algebra $\oplus_{I\subset [m]} C^{*}(K_I)\cong C^*(\zk)$.
\end{itemize}
\end{theorem}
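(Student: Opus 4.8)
The strategy is to establish (1), (2) and (3) simultaneously by constructing, inductively on the matrix width $j-i$, a defining system $C=(c_{i,j})$ whose entries have exactly the support and degree recorded in (3); then (1) is immediate from the construction and (2) follows from a careful analysis of the indeterminacy. Throughout I work in the differential graded algebra $\oplus_{I\subset [m]}C^{*}(K_I)\cong C^{*}(\zk)$ of Theorem~\ref{zkcoh} and I use its product twice over: a product of a cochain lying in the summand $C^{*}(K_{J_1})$ with one lying in $C^{*}(K_{J_2})$ vanishes whenever $J_1\cap J_2\neq\varnothing$, and otherwise lands in $C^{*+1}(K_{J_1\sqcup J_2})$, the shift by $1$ being built into the join map $j$ of Theorem~\ref{zkcoh}. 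Since $I_1,\ldots,I_k$ are pairwise disjoint, all consecutive block unions $I_s\sqcup\ldots\sqcup I_t$ are honest disjoint unions, and these degree shifts are precisely what match the bookkeeping $d(s,r+s)=d(s)+\ldots+d(r+s)+1$.

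For (1) I put $c_{i,i+1}$ equal to a cocycle in $C^{d(i)}(K_{I_i})$ representing $\alpha_i$ and induct on $r=1,\ldots,k-2$. Suppose all entries $c_{i,j}$ with $j-i\le r$ have been chosen so that $c_{s,s+r'+1}\in C^{d(s,s+r')-1}(K_{I_s\sqcup\ldots\sqcup I_{s+r'}})$ and the defining relations $d\,c_{i,j}=\sum_{t=i+1}^{j-1}\bar{c}_{i,t}\wedge c_{t,j}$ hold for $j-i\le r$. For each $s$ with $1\le s\le k-r$ set $w_{s}:=\sum_{t=s+1}^{s+r}\bar{c}_{s,t}\wedge c_{t,s+r+1}$. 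The standard fact that the obstruction cochains of a partial defining system are cocycles (see~\cite{K}, or the generalized Bianchi identity~(\ref{MCar})) shows $d\,w_{s}=0$; the product rule above together with the inductive degrees shows $w_{s}\in C^{d(s,s+r)}(K_{I_s\sqcup\ldots\sqcup I_{s+r}})$; and the hypothesis $\tilde{H}^{d(s,s+r)}(K_{I_s\sqcup\ldots\sqcup I_{s+r}})=0$ lets me choose $c_{s,s+r+1}\in C^{d(s,s+r)-1}(K_{I_s\sqcup\ldots\sqcup I_{s+r}})$ with $d\,c_{s,s+r+1}=w_{s}$. When $r=k-1$ nothing further is solved: $w_{1}=\sum_{t=2}^{k}\bar{c}_{1,t}\wedge c_{t,k+1}$ is a cocycle in $C^{d(1,k)}(K_{I_1\sqcup\ldots\sqcup I_k})$ and $a(C)=-w_{1}$, so $\langle\alpha_{1},\ldots,\alpha_{k}\rangle$ is defined and contains $[a(C)]$. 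This already delivers the support statement of (3) and the displayed formula for $a(C)$.

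For (2) and the remainder of (3) it remains to see that $[a(C)]$ is the only value of the product. The only freedom in the construction is: the choice of representative $c_{i,i+1}$ of $\alpha_{i}$ (any two differ by a coboundary $d\,b_{i}$, $b_{i}\in C^{d(i)-1}(K_{I_i})$); and, at each later stage, the choice of $c_{s,s+r+1}$ solving $d\,c_{s,s+r+1}=w_{s}$ (any two differ by a cocycle $z_{s,s+r+1}\in C^{d(s,s+r)-1}(K_{I_s\sqcup\ldots\sqcup I_{s+r}})$). Under the hypothesis of (2), $[z_{s,s+r+1}]\in\tilde{H}^{d(s,s+r)-1}(K_{I_s\sqcup\ldots\sqcup I_{s+r}})=0$, so every such $z_{s,s+r+1}$ is itself a coboundary $d\,\zeta_{s,s+r+1}$. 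Now an induction on width paralleling the one above shows that replacing a higher entry by itself plus a coboundary forces, upon re-solving the defining relations, every subsequent entry to change again only by a coboundary: at the next width the altered entry is always paired with an adjacent representative $c_{t,t+1}$ of some $\alpha_{t}$, which is closed, so the extra summands entering the new cochains $w_{s'}$ are themselves coboundaries. Hence $a(C)$ changes only by a coboundary; and changing the representatives $c_{i,i+1}$ leaves $[a(C)]$ unchanged by the theorem of Section~\ref{Massey_S} that $\langle\alpha_{1},\ldots,\alpha_{k}\rangle$ depends only on the classes $\alpha_{i}$. Therefore $[a(C)]\in H^{d(1,k)}(K_{I_1\sqcup\ldots\sqcup I_k})$ is independent of all choices, the product $\langle\alpha_{1},\ldots,\alpha_{k}\rangle=\{[a(C)]\}$ is strictly defined, and the system built in (1) is exactly the one required by (3).

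The main obstacle is the inductive step of (2): verifying that once the ambiguities $z_{s,s+r+1}$ are coboundaries, the entire tower of re-solved defining relations propagates coboundaries only, so that no hidden indeterminacy survives. For a $k$-fold product, unlike a triple one, this uses the product rule of Theorem~\ref{zkcoh} and the degree hypotheses of (2) at every width simultaneously, and the sign bookkeeping must be carried through with the conventions of Section~\ref{Massey_S} (the involution $\bar\omega$ and the generalized Leibniz rule). The closedness of the $w_{s}$ and the degree/support accounting are routine.
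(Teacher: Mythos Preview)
Your construction of the defining system for (1) and the resulting formula in (3) are correct and essentially match the paper's approach.

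The gap is in (2). Your central claim is that ``replacing a higher entry by itself plus a coboundary forces, upon re-solving the defining relations, every subsequent entry to change again only by a coboundary,'' and your justification is that ``at the next width the altered entry is always paired with an adjacent representative $c_{t,t+1}$ of some $\alpha_{t}$, which is closed.'' This only handles the first propagation step, and the conclusion is false as stated. Concretely: if you change $c_{s,s+r+1}$ by a coboundary $d\zeta$, then at width $r{+}2$ the change in the obstruction is $\bar{c}_{s-1,s}\wedge d\zeta = \pm d(\bar{c}_{s-1,s}\wedge\zeta)$, so the new $c_{s-1,s+r+1}$ differs from the old one by $\pm\bar{c}_{s-1,s}\wedge\zeta$ plus a cocycle. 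The cochain $\bar{c}_{s-1,s}\wedge\zeta$ is \emph{not} a coboundary in general. Moreover, at width $r{+}3$ the original altered entry $c_{s,s+r+1}$ is now paired with width-$2$ entries $c_{s-2,s}$ and $c_{s+r+1,s+r+3}$, which are not closed. So your induction does not propagate beyond one step, and the final difference $a(C')-a(C)$ is not shown to be exact. (A secondary issue: not every defining system arises from your inductive construction with $c_{s,r+s+1}$ lying in the single summand $C^{*}(K_{I_s\sqcup\ldots\sqcup I_{r+s}})$; you must also compare with general defining systems.)

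The paper closes this gap by an outer induction on the order $k$. Given two defining systems $C$ and $C'$, it interpolates through systems $C(r)$ agreeing with $C'$ on entries of width $\le r$, and then through $C(r,s)$ changing one width-$(r{+}1)$ entry at a time. At the crucial step, the correction $b_{ij}$ satisfying $d\,b_{ij}=(\text{obstruction})$ exists because that obstruction is shown to represent an element of a \emph{shorter} Massey product $\langle\alpha_{i},\ldots,\alpha_{s},[b_{s+1}],\alpha_{s+r+2},\ldots,\alpha_{j-1}\rangle$ of order $<k$ with one argument $[b_{s+1}]=0$; by the inductive hypothesis on $k$ (this shorter product satisfies the same vanishing conditions of (2)), it is strictly defined and hence equals $\{0\}$, so the obstruction is exact. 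Your argument needs a mechanism of this kind; purely local coboundary-chasing does not suffice once $k\ge 4$.
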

\begin{proof}
The proof goes by induction on $k\geq 3$. 
First, we prove statement (1).

For $k=3$ the condition (1) implies that the 2-fold products $\langle\alpha_{1},\alpha_{2}\rangle$ and $\langle\alpha_{2},\alpha_{3}\rangle$ vanish and the triple Massey product $\langle\alpha_{1},\alpha_{2},\alpha_{3}\rangle$ is defined. By the condition (2), indeterminacy in $\langle\alpha_{1},\alpha_{2},\alpha_{3}\rangle$ is trivial, and therefore, $\langle\alpha_{1},\alpha_{2},\alpha_{3}\rangle$ is strictly defined. 

The inductive hypothesis follows, since all the Massey products of consecutive elements of orders $2,\ldots,k-1$ for $k\geq 4$ are defined by the inductive assumption and contain only zero elements by Theorem~\ref{zkcoh}. Therefore, a defining system $C$ exists and the corresponding cocycle $a(C)$ represents an element in the Massey product $\langle\alpha_{1},\ldots,\alpha_{k}\rangle$.

Now we prove statement (2).

We need to show that $[a(C)]=[a(C')]$ for any two defining systems $C$ and $C'$. By the inductive assumption, suppose that the statement holds for defined higher Massey products of orders less than $k\geq 4$. The induction step can be divided into two parts, and in both of them we also act by induction.
\\
I. Let us construct a sequence of defining systems $C(1),\ldots,C(k-1)$ for the defined Massey product $\langle\alpha_{1},\ldots,\alpha_{k}\rangle$ such that the following properties:
\begin{itemize}
\item[(1)] $C(1)=C$;
\item[(2)] $c_{ij}(r)=c'_{ij}$, if $j-i\leq r$;
\item[(3)] $[a(C(r))]=[a(C(r+1))]$, for all $1\leq r\leq k-2$.
\end{itemize}
Observe that (2) implies $c_{i,i+1}(r)=a_{i}=a'_{i}$ for all $1\leq i\leq k$, and $C(k-1)=C'$. We apply induction on $r\geq 1$ to determine the defining systems $C(r)$. Since $C(1)=C$ by (1), we need to prove the induction step assuming that $C(r)$ is already defined. 

For any $1\leq s\leq k-r$ let us consider a cochain 
$$
b_{s}=c'_{s,r+s+1}-c_{s,r+s+1}(r).
$$
By definition of a Massey product, 
$$
d(b_{s})=d(c'_{s,r+s+1})-d(c_{s,r+s+1}(r))=\sum\limits_{p=s+1}^{r+s}\bar{c}'_{s,p}\wedge c'_{p,r+s+1}-
\sum\limits_{p=s+1}^{r+s}\bar{c}_{s,p}(r)\wedge c_{p,r+s+1}(r)=0
$$ 
by property (2) of $C(r)$ above. It follows that $b_{s}$ is a cocycle, and therefore, 
$$
[b_s]\in\tilde{H}^{d(s,r+s)-1}(K_{I_{s}\sqcup\ldots\sqcup{I_{r+s}}})=0,
$$ 
for all $1\leq s\leq k-r$, by condition (2) of our statement, since here we have $1\leq r\leq k-2$ by property (3) above.  
\\
II. Observe that the construction of the defining system $C(r+1)$ will be finished if one is able to determine a sequence of defining systems $C(r,s)$ ($0\leq s\leq k-r$) for $\langle\alpha_{1},\ldots,\alpha_{k}\rangle$ with the following properties:
\begin{itemize}
\item[(1')] $C(r,0)=C(r)$;
\item[(2')] $c_{ij}(r,s)=c_{ij}(r)$, if $j-i\leq r$, and
$$
c_{ij}(r,s)=\begin{cases}
c_{ij}(r),&\text{if $i>s$; (*)}\\
c_{ij}(r)+b_{i},&\text{if $i\leq s$ (**)}
\end{cases}
$$
when $j-i=r+1\geq 2$;
\item[(3')] $[a(C(r,s))]=[a(C(r,s+1))]$, for all $0\leq s\leq k-r-1$. 
\end{itemize}
It is easy to see that property (2')(**) for $j=i+r+1$ implies that $c_{ij}(r,k-r)=c_{ij}(r)+(c'_{i,r+1+i}-c_{i,r+1+i}(r))=c'_{ij}$, the latter being equal to $c_{ij}(r+1)$ by property (2) above, and therefore, one can take $C(r+1)=C(r,k-r)$ and the proof will be completed by induction.

So, to finish the proof, it suffices to construct a sequence of defining systems $C(r,s)$. Now we shall do it acting by induction on $s\geq 0$. The base of induction $s=0$ follows by property (1'). Now assume that we have already constructed $C(r,s)$ and let us determine the defining system $C(r,s+1)$.

If $i>s+1$, then one can take $c_{ij}(r,s+1)=c_{ij}(r,s)$, see (2')(*). Similarly, one can also take $c_{ij}(r,s+1)=c_{ij}(r,s)$ if $j<s+r+2$, see (2')(**). Suppose $1\leq i\leq s+1<s+2+r\leq j\leq k+1$. Now, by induction on $j-i\geq r+1$ we shall determine a set of cochains $\{b_{ij}\}$ such that
$$
c_{ij}(r,s+1)=c_{ij}(r,s)+b_{ij}\eqno (***).
$$

Equality (**) implies that $c_{s+1,r+2+s}(r,s+1)=c_{s+1,r+2+s}(r)+b_{s+1}$ and the latter is equal to $c_{s+1,r+2+s}(r,s)+b_{s+1}$ by equality (*). Therefore, one can set $b_{s+1,r+2+s}=b_{s+1}$. By inductive assumption, assume that for all $r+1\leq j-i<w$ the cochains $b_{ij}$ have already been defined. Then equality (***) implies that 
$$
d(b_{ij})=d(c_{ij}(r,s+1))-d(c_{ij}(r,s))=\sum\limits_{p=i+1}^{j-1}(\bar{c}_{i,p}(r,s)+\bar{b}_{i,p})\wedge (c_{p,j}(r,s)+b_{p,j})-
$$
$$
-\sum\limits_{p=i+1}^{j-1}\bar{c}_{i,p}(r,s)\wedge c_{p,j}(r,s)=\sum\limits_{p=i+1}^{s+1}\bar{c}_{i,p}(r,s)\wedge b_{p,j}+
\sum\limits_{p=r+s+2}^{j-1}\bar{b}_{i,p}\wedge c_{p,j}(r,s),
$$ 
where the last equality holds, since $\sum\limits_{p=i+1}^{j-1}\bar{b}_{i,p}\wedge b_{p,j}=0$, because $b_{p,j}=0$ when $p>s+1$, and $b_{i,p}=0$ when $p<r+2+s$, and one gets the following equality for any $r+1\leq j-i<w$:
$$
d(b_{ij})=\sum\limits_{p=i+1}^{s+1}\bar{c}_{i,p}(r,s)\wedge b_{p,j}+
\sum\limits_{p=r+s+2}^{j-1}\bar{b}_{i,p}\wedge c_{p,j}(r,s)\eqno (1.1)
$$   

Observe that for $j-i=w$ the right hand side of the equality (1.1) is a cocycle $a$ representing an element $\alpha=[a]$ in 
$$
-\langle\alpha_{i},\ldots,\alpha_{s},[b_{s+1}],\alpha_{s+r+2},
\ldots\alpha_{j-1}\rangle \eqno (1.2)
$$ 
This can be shown acting by induction on $j-i\geq r+2$. 
Indeed, for $j-i=r+2$ and $1\leq i\leq s+1<s+2+r\leq j\leq k+1$ we have only two possible cases: (1) $i=s+1,j=s+r+3$ and the right hand side of (1.1) has the form $\bar{b}_{s+1,r+s+2}\wedge c_{s+r+2,s+r+3}=\bar{b}_{s+1}a_{s+r+2}$. The latter cocycle represents $-\langle[b_{s+1}],\alpha_{s+r+2}\rangle$; (2) $i=s,j=s+r+2$ and the right hand side of (1.1) has the form $\bar{c}_{s,s+1}b_{s+1,s+r+2}=\bar{a}_{s}\wedge b_{s+1}$. The latter cocycle represents $-\langle\alpha_{s},[b_{s+1}]\rangle$. The induction step follows from the equality (1.1), definition of a (higher) Massey product, and the inductive assumption.   

Since $[b_{s+1}]=0$, one concludes that the Massey product given by the formula (1.2) above is trivial. Furthermore, as $r\geq 1$ it follows that the order of this Massey product is less than $(j-1)-i+1=j-i\leq k$. Therefore, we can apply the inductive assumption on $k$ to this Massey product, since 
$[b_{s+1}]\in\tilde{H}^{\beta}(K_{I_{s+1}\sqcup\ldots\sqcup{I_{r+s+1}}})$ for $\beta=d(s+1,r+s+1)-1=d(s+1)+\ldots+d(r+s+1)$. Thus, by the inductive assumption on $k$, we obtain that the Massey product
$$
0\in\langle\alpha_{i},\ldots,\alpha_{s},[b_{s+1}],\alpha_{s+r+2},
\ldots,\alpha_{j-1}\rangle
$$ 
is strictly defined, that is, it contains only zero. It follows that equality (1.1) above has a solution for $j-i=w$. Therefore, for all $1\leq i<j\leq k+1$ equality (1.1) implies 
$$
d(b_{ij})=\sum\limits_{p=i+1}^{j-1}\bar{c}_{i,p}(r,s+1)\wedge c_{p,j}(r,s+1)-
\sum\limits_{p=i+1}^{j-1}\bar{c}_{i,p}(r,s)\wedge c_{p,j}(r,s).
$$
The above equality means that: (i) $C(r,s+1)$ is also a defining system for the Massey product $\langle\alpha_{1},\ldots,\alpha_{k}\rangle$ and (ii) $[a(C(r,s+1))]=[a(C(r,s))]$ (when in the above formula $j-i=k$).
The whole proof of the statement (2) is now completed by induction on the order $k$ of a Massey product.

Finally we prove statement (3).

We proceed by induction on $k$ again, using the fact that the right hand side in Theorem~\ref{zkcoh} admits a multigraded refinement, that is, the differential $d$ respects the multigraded structure on the Tor-module, given by Definition~\ref{gradedkoszul} and Theorem~\ref{Hochtheorem}. Therefore, induction on $j-i=r\geq 1$ in the defining system $C$, see condition (3) above, gives us simplicial cochains $c_{s,r+s+1}\in C^{d(s,r+s)-1}(K_{I_{s}\sqcup\ldots\sqcup {I_{r+s}}})$ for all $1\leq s\leq k-r,1\leq r\leq k-2$, satisfying the relations for elements of a defining system themselves and giving the unique element $[a(C)]$ of the $k$-fold Massey product $\langle\alpha_{1},\ldots,\alpha_{k}\rangle$. The latter class is an element of the group $H^{d(1,k)}(K_{I_{1}\sqcup\ldots\sqcup {I_{k}}})$ by definition of multiplication in $H^*(\zk)$, see Theorem~\ref{zkcoh}. This finishes the proof of the theorem.
\end{proof}

\begin{remark}
It is easy to see that Theorem~\ref{mainlemma} implies the triple Massey products in~\cite{BaskM} and~\cite{D-S} are all strictly defined. On the other hand, in the example of a trivial triple Massey product in $H^*(\zp)$ when $P$ is a hexagon, see~\cite[Example 3.4.1]{L2019} as well as in the case of Theorem~\ref{tripleMassey} (b) the condition (2) of Theorem~\ref{mainlemma} is not satisfied and those Massey products are not strictly defined.     
\end{remark}


First examples of non-trivial higher Massey products of any order in $H^*(\zk)$ were constructed by Limonchenko in a short note~\cite{L2016}.
A complete proof of nontriviality and an example of computation of a non-trivial 4-fold Massey product were given by the first author in~\cite{L2017}. We describe this construction below, following~\cite{BL}.

\begin{definition}{\cite{L2016,BL}}\label{MasseyConstr}
Let $Q^0$ be a point and $Q^1\subset\R^1$ be a segment $[0,1]$. Denote by $I^{n}=[0,1]^n, n\geq 2$ the standard $n$-dimensional cube with facets $F_{1},\ldots,F_{2n}$ labeled in such a way that $F_{i},1\leq i\leq n$ contains the origin 0, $F_{i}$ and $F_{n+i}$ are parallel for all $1\leq i\leq n$. 
Then the face ring of the cube $I^n$ has the form:
$$
\ko[I^n]=\ko[v_{1},\ldots,v_{n},v_{n+1},\ldots,v_{2n}]/I_{I^n},
$$
where the Stanley--Reisner ideal is $I_{I^n}=(v_{1}v_{n+1},\ldots,v_{n}v_{2n})$.

Consider a polynomial ring
$$
\ko[v_{1},\ldots,v_{2n},v_{k',n+k'+i'}|\,1\leq i'\leq n-2, 1\leq k'\leq n-i']
$$
and its monomial ideal, generated by square free monomials:
$$
I=(v_{k}v_{n+k+i},v_{k',n+k'+i'}v_{n+k'+l},v_{k',n+k'+i'}v_{p},v_{k',n+k'+i'}v_{k'',n+k''+i''}),
$$
where $v_{j}$ corresponds to $F_{j}$ for all $1\leq j\leq 2n$, and 
$$
0\leq i\leq n-2, 1\leq k\leq n-i, 1\leq i',i''\leq n-2, 1\leq k'\leq n-i', 
$$
$$
1\leq k''\leq n-i'', 1\leq p\neq k'\leq k'+i', 0\leq l\neq i'\leq n-2, 
$$
$$
k'+i'=k''\,\text{or }k''+i''=k'.
$$

Let us define $Q^n\subset\R^n$ to be a simple polytope such that for its Stanley--Reisner ideal: $I_{Q^n}=I$. Note that $Q^n$ has a natural realization as a {\emph{2-truncated cube}}, that is, a simple $n$-dimensional polytope obtained from an $n$-dimensional cube as a result of performing truncations of faces of codimension two by generic hyperplanes in $\R^n$, see~\cite{BV}. Moreover, its combinatorial type does not depend on the order in which the faces of the cube $I^n$ are truncated (the generators $v_{i,j}$ correspond to the truncated faces $F_{i}\cap F_{j}$ of $I^n$).
\end{definition}

Then by~\cite[Theorem 3.6]{L2019}, for any $n\geq 2$ there exists a strictly defined non-trivial $n$-fold Massey product $\langle\alpha_{1},\ldots,\alpha_{n}\rangle\subset H^*(\mathcal Z_{Q^n})$, where $\alpha_{i}$ is a generator of $\tilde{H}^{0}(F_{i}\sqcup F_{n+i})$.

Finally, using the above construction, Theorem~\ref{mainlemma}, and the {\emph{simplicial multiwedge}} operation (or, $J$-{\emph{construction}}), introduced in the framework of toric topology by Bahri, Bendersky, Cohen, and Gitler~\cite{BBCG10}, the first author proved the following result.

\begin{theorem}[{\cite{L2019}}]\label{mymainMassey}
For any $n\geq 2$ there exists a strictly defined non-trivial Massey product of order $k$ in $H^*(\mathcal Z_{Q^n})$ for all $k$, $2\leq k\leq n$.
Furthermore, there exists a family of moment-angle manifolds $\mathcal{F}$ such that for any given $l,r\geq 2$ there is an $l$-connected manifold $M\in\mathcal{F}$ with a strictly defined non-trivial $r$-fold Massey product in $H^*(M)$.
\end{theorem}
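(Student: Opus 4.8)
The plan is to leverage the construction of Definition~\ref{MasseyConstr} together with \cite[Theorem 3.6]{L2019} recalled just above, from which, for every $n\ge 2$, the moment-angle manifold $\mathcal Z_{Q^{n}}$ already carries a strictly defined non-trivial $n$-fold Massey product $\langle\alpha_{1},\ldots,\alpha_{n}\rangle$ with $\alpha_{i}$ a generator of $\tilde H^{0}(F_{i}\sqcup F_{n+i})$. The statement has two layers: realising products of \emph{every} intermediate order $k$, $2\le k\le n$, inside the \emph{single} manifold $\mathcal Z_{Q^{n}}$, and then, via the simplicial multiwedge, pushing the connectivity of the carrier manifold arbitrarily high while keeping a strictly defined non-trivial higher Massey product.

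For the first layer, fix $2\le k\le n$ and apply Theorem~\ref{mainlemma} to the truncated collection $\alpha_{1},\ldots,\alpha_{k}$ inside $H^{*}(\mathcal Z_{Q^{n}})$, with $I_{i}=\{F_{i},F_{n+i}\}$ and $d(i)=0$, so that $d(s,r+s)=1$. One then has to verify three facts about induced subcomplexes of the 2-truncated cube $Q^{n}$: (i) $\tilde H^{1}(K_{Q^{n},\,I_{s}\sqcup\cdots\sqcup I_{r+s}})=0$ for all consecutive blocks with $1\le s\le k-r$, $1\le r\le k-2$, whence $\langle\alpha_{1},\ldots,\alpha_{k}\rangle$ is defined; (ii) $\tilde H^{0}(K_{Q^{n},\,I_{s}\sqcup\cdots\sqcup I_{r+s}})=0$ on the same blocks, whence the product is strictly defined; and (iii) the top induced subcomplex $K_{Q^{n},\,I_{1}\sqcup\cdots\sqcup I_{k}}$ carries a non-zero class in $\tilde H^{1}$ represented by the cocycle $a(C)=-\sum_{j=1}^{k-1}\bar c_{1,1+j}\wedge c_{1+j,k+1}$ of a defining system, whence the product is non-trivial. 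Facts (i)--(ii) hold because, on these proper sub-blocks, the induced subcomplexes of $Q^{n}$ agree with those appearing in the case $k=n$ already settled in \cite[Theorem 3.6]{L2019}; fact (iii) is the substantive point. Equivalently, since the $Q^{n}$ fit into a direct family of polytopes in the sense of \cite{BL}, the strictly defined non-trivial $k$-fold product of $H^{*}(\mathcal Z_{Q^{k}})$ is carried by a split algebra monomorphism onto one in $H^{*}(\mathcal Z_{Q^{n}})$.

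For the second layer, fix $l,r\ge 2$, put $n=r$, and apply the $J$-construction (simplicial multiwedge) of Bahri--Bendersky--Cohen--Gitler~\cite{BBCG10} to $Q^{r}$ with the constant vector $J=(j,\ldots,j)$. The polytope $Q^{r}(J)$ is again simple, so $M:=\mathcal Z_{Q^{r}(J)}$ is a moment-angle manifold, and $\mathcal Z_{Q^{r}(J)}\cong(\underline{\mathbb D^{2j}},\underline{\mathbb S^{2j-1}})^{Q^{r}}$; consequently a minimal non-face of size $s\ge 2$ of $K_{Q^{r}}$ becomes one of size $sj$ of $K_{Q^{r}(J)}$ and, by Theorem~\ref{zkcoh}, contributes only in degree $2sj-1\ge 4j-1$, so $M$ is $(4j-2)$-connected and one chooses $j$ with $4j-2\ge l$. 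At the same time the $J$-construction multiplies the multigrading and regrades $H^{*}(\mathcal Z_{K})$ by a rule described explicitly in \cite{BBCG10}, under which the vanishing of every reduced cohomology group of an induced subcomplex entering the hypotheses of Theorem~\ref{mainlemma}, and the non-vanishing of the top one, are preserved. Hence the images $\alpha'_{1},\ldots,\alpha'_{r}$ of $\alpha_{1},\ldots,\alpha_{r}$ (now of dimension $4j-1$) still satisfy those hypotheses, so $\langle\alpha'_{1},\ldots,\alpha'_{r}\rangle\subset H^{*}(M)$ is a strictly defined non-trivial $r$-fold Massey product. Taking $\mathcal F=\{\mathcal Z_{Q^{n}(J)}\,:\,n\ge 2,\ j\ge 1,\ J=(j,\ldots,j)\}$ finishes the proof.

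The main obstacle is the genuinely combinatorial heart of the first layer, step (iii): identifying the homotopy type of the long induced subcomplexes $K_{Q^{n},\,I_{1}\sqcup\cdots\sqcup I_{k}}$ of the 2-truncated cube — they should be homotopy equivalent to a wedge carrying a single sphere in the degree dictated by Theorem~\ref{zkcoh} — and checking that the cocycle $a(C)$ of a suitably chosen defining system represents its fundamental class; soft homological algebra does not suffice here, and this is precisely where the explicit geometry of 2-truncated cubes is used, as in \cite{L2019,BL}. A secondary technical point is to make the multiwedge regrading precise enough to transport Theorem~\ref{mainlemma} verbatim, which rests on the Tor-algebra description of $\mathcal Z_{K(J)}$ in \cite{BBCG10} together with Theorem~\ref{zkcoh}.
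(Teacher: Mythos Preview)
Your plan for the second layer---pushing connectivity via the $J$-construction while preserving the hypotheses of Theorem~\ref{mainlemma}---is the right idea and matches the ingredients the paper records (the construction of $Q^n$, Theorem~\ref{mainlemma}, and the simplicial multiwedge). The first layer, however, has a genuine gap.

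You propose to obtain the $k$-fold product in $H^*(\mathcal Z_{Q^n})$ for $k<n$ by applying Theorem~\ref{mainlemma} to the \emph{truncated} collection $\alpha_1,\ldots,\alpha_k$, where the $\alpha_i$ are the very classes entering the known non-trivial $n$-fold product. This cannot succeed. By Proposition~\ref{triviality} (a general feature of Massey products, independent of any combinatorics of $Q^n$), once $\langle\alpha_1,\ldots,\alpha_n\rangle$ is \emph{defined}, every proper consecutive sub-product $\langle\alpha_l,\ldots,\alpha_q\rangle$ with $q-l<n-1$ is defined and \emph{trivial}. In particular $0\in\langle\alpha_1,\ldots,\alpha_k\rangle$ for all $2\le k<n$. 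In the language of your outline, your conditions (i) and (iii) are mutually exclusive: the ``top'' block $I_1\sqcup\cdots\sqcup I_k$ for the $k$-fold product is at the same time an \emph{intermediate} block for the $n$-fold product (namely $s=1$, $r=k-1\le n-2$), and the hypothesis of Theorem~\ref{mainlemma}(1) that you yourself invoke to secure the $n$-fold product forces $\tilde H^{1}(K_{Q^n,\,I_1\sqcup\cdots\sqcup I_k})=0$, flatly contradicting (iii). So step (iii) is not merely ``substantive''---with these classes it is impossible.

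Your parenthetical alternative, appealing to the direct family of polytopes of \cite{BL} and a split monomorphism $H^*(\mathcal Z_{Q^k})\hookrightarrow H^*(\mathcal Z_{Q^n})$, is the correct direction but is only a gesture here: one must actually produce, for each $k\le n$, a \emph{different} collection of classes in $H^*(\mathcal Z_{Q^n})$---not a truncation of the original $n$---by exhibiting inside $K_{Q^n}$ a full subcomplex on which the combinatorics reproduces that of the $k$-fold construction (e.g.\ via a face embedding of $Q^k$ into $Q^n$ and the attendant retraction of moment-angle manifolds, or via the direct-family machinery). The survey does not spell this out either, deferring to \cite{L2019}; but the step is real and your proposal does not supply it.
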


Applying the theory of nestohedra~\cite{FS,Post05} and the theory of the differential ring of polytopes, introduced by Buchstaber~\cite{B2008}, the previous result was generalized by Buchstaber and Limonchenko as follows.

\begin{theorem}[{\cite{BL}}]
For any given $\ell\geq 2$ and $n_{1},\ldots,n_{r}\geq 2, r\geq 1$ there exists a polyhedral product of the type $(\underline{D}^{2j_i},\underline{S}^{2j_{i}-1})^{K}=\mathcal Z_{P_{B}(J)}$, where $K:=K_{P_B}$ and $J=J(\ell,n_{1},\ldots,n_{r}):=(j_{1},\ldots,j_{m})$ for a certain flag nestohedron $P_B$ with $m$ vertices on a connected building set $B$, such that 
\begin{itemize}
\item The moment-angle manifold $\mathcal Z_{P_{B}(J)}$ is $\ell$-connected;
\item There exist strictly defined non-trivial Massey products of orders $n_{1},\ldots,n_{r}$ in $H^*(\mathcal Z_{P_{B}(J)})$.
\end{itemize}
\end{theorem}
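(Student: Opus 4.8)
The plan is to build the desired manifold in two stages: first assemble a single flag nestohedron $P_B$ on a connected building set $B$ whose moment-angle manifold carries strictly defined non-trivial Massey products of orders $n_{1},\dots,n_{r}$ \emph{simultaneously}, and then apply the $J$-construction to raise the connectivity to $\ell$ while preserving (rescaled copies of) all of these products. For the first stage, I would start from the $2$-truncated cubes $Q^{n_{i}}$ of Definition~\ref{MasseyConstr} and Theorem~\ref{mymainMassey}: each $Q^{n_{i}}$ is a flag nestohedron, its building set $B_{i}$ (on a ground set $S_{i}$, the $S_{i}$ taken pairwise disjoint) is connected, and $H^{*}(\mathcal Z_{Q^{n_{i}}})$ contains a strictly defined non-trivial $n_{i}$-fold Massey product which, via Theorem~\ref{zkcoh}, is witnessed by reduced cohomology of induced subcomplexes of $K_{Q^{n_{i}}}$ on prescribed pairs of vertices. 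I would then glue the $B_{i}$ into the connected building set $B=B_{1}\cup\dots\cup B_{r}\cup\{S_{1}\sqcup\dots\sqcup S_{r}\}$ on the disjoint union; this is precisely the operation on building sets underlying the product in Buchstaber's differential ring of polytopes, so by the theory of (flag) nestohedra~\cite{FS,Post05,B2008} the resulting nestohedron $P_{B}$ is again flag, and each $K_{Q^{n_{i}}}$ reappears as the induced subcomplex of $K_{P_{B}}$ on the vertex set coming from $B_{i}$.

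Next I would verify the crucial \textbf{non-interference} point: adding the top element $S_{1}\sqcup\dots\sqcup S_{r}$ and the cross-block non-faces only creates new induced subcomplexes meeting more than one block, so every induced subcomplex of $K_{P_{B}}$ that lies inside a single block $B_{i}$ — in particular all the ones appearing in Theorem~\ref{zkcoh} and in the hypotheses of Theorem~\ref{mainlemma} for the $i$-th tuple of classes — is unchanged together with its reduced cohomology. Given this, Theorem~\ref{mainlemma}~(1),(2) applies to each of the $r$ tuples $\langle\alpha_{1}^{(i)},\dots,\alpha_{n_{i}}^{(i)}\rangle$ verbatim: the vanishing conditions hold in $K_{P_{B}}$ because they held in $K_{Q^{n_{i}}}$, so each $n_{i}$-fold Massey product is defined and strictly defined in $H^{*}(\mathcal Z_{P_{B}})$, and its unique element is non-zero because it lives in $\widetilde H^{*}$ of a single-block induced subcomplex, which is literally the same group as in $H^{*}(\mathcal Z_{Q^{n_{i}}})$, where it was non-zero.

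For the second stage I would invoke the simplicial multiwedge ($J$-)construction of Bahri--Bendersky--Cohen--Gitler~\cite{BBCG10}: for a vector $J=(j_{1},\dots,j_{m})$ of positive integers one has $(\underline D^{2j_{i}},\underline S^{2j_{i}-1})^{K_{P_{B}}}\cong\mathcal Z_{K_{P_{B}}(J)}$, and the multiwedge of a flag nestohedron is again a flag nestohedron, so $K_{P_{B}}(J)=K_{P_{B}(J)}$ for a flag nestohedron $P_{B}(J)$ on a connected building set. Choosing $J=J(\ell,n_{1},\dots,n_{r})$ with $\min_{i} j_{i}$ exceeding a bound depending only on $\ell$ and on the (finitely many) degrees of the Massey-product classes makes $\mathcal Z_{P_{B}(J)}$ an $\ell$-connected closed manifold, since passing from $\mathbb S^{1}$ to $\mathbb S^{2j_{i}-1}$ in the polyhedral product raises the connectivity of the corresponding homotopy colimit. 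Finally, part~(3) of Theorem~\ref{mainlemma} — the multigraded refinement — lets me transport each strictly defined non-trivial $n_{i}$-fold Massey product to $H^{*}(\mathcal Z_{P_{B}(J)})$: the defining system and related cocycle can be chosen multigraded, so under the $J$-construction only the internal degrees of the classes are rescaled, while the order $n_{i}$ and the non-vanishing of the product are unaffected. Taking $\mathcal Z_{P_{B}(J)}$ as the required polyhedral product completes the argument.

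The main obstacle is the non-interference claim of the first stage: that combining the $B_{i}$ into one connected building set yields a flag nestohedron whose Massey-detecting induced-subcomplex structure is unspoiled for all $r$ products at once, so that the hypotheses of Theorem~\ref{mainlemma} — above all the non-triviality of each product's unique element — survive the gluing simultaneously. This is in essence a gluing lemma for the Golod-type (induced-subcomplex) structure of nestohedra under the product operation in Buchstaber's ring of polytopes, and it is the step where the combinatorics of (flag) building sets does the real work.
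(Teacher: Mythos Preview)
The paper is a survey and does not prove this theorem; it merely cites \cite{BL} and indicates that the proof uses ``the theory of nestohedra'' and ``the theory of the differential ring of polytopes''. So there is no proof in the paper to compare against beyond that hint. Your two-stage outline (build a flag nestohedron carrying all the required Massey products, then apply the $J$-construction to raise connectivity) is consistent with those hints and with the way Theorem~\ref{mymainMassey} is obtained in \cite{L2019}.

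Two comments on the details. First, your identification of the gluing $B=B_{1}\cup\cdots\cup B_{r}\cup\{S_{1}\sqcup\cdots\sqcup S_{r}\}$ with ``the product in Buchstaber's differential ring of polytopes'' is not quite right: the Cartesian product $P_{B_{1}}\times\cdots\times P_{B_{r}}$ corresponds to the \emph{disconnected} building set $B_{1}\cup\cdots\cup B_{r}$, whereas adding the top element raises the dimension by one and gives a genuinely different polytope. Your non-interference argument may still go through for this larger $P_{B}$, but you should not invoke the product to justify it. Second, the claim that ``the multiwedge of a flag nestohedron is again a flag nestohedron'' is neither obvious nor needed: the theorem only asserts that $P_{B}$ is a flag nestohedron on a connected building set, not that $P_{B}(J)$ is.

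More importantly, the gluing stage is unnecessary. By the first sentence of Theorem~\ref{mymainMassey}, a \emph{single} polytope $Q^{n}$ with $n=\max(n_{1},\dots,n_{r})$ already carries strictly defined non-trivial Massey products of every order $k$ with $2\le k\le n$, hence of orders $n_{1},\dots,n_{r}$ simultaneously; and $Q^{n}$ is a flag nestohedron on a connected building set. So you can take $P_{B}=Q^{n}$ and pass directly to the $J$-construction, using Theorem~\ref{mainlemma}(3) (the multigraded defining system) to transport all $r$ products at once to $H^{*}(\mathcal Z_{Q^{n}(J)})$. This bypasses the ``non-interference'' obstacle you flagged as the main difficulty.
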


Another way to generalize Theorem~\ref{mymainMassey} has been recently suggested by Grbi\'c and Linton. Their approach is based on a careful investigation, on the level of cochains, of cup and Massey products of the cohomology classes occured in Theorem~\ref{mainlemma}. Note that below the non-trivial Massey products, which are stated to exist, are no longer strictly defined, in general.

\begin{theorem}[{\cite{GL2}}]\label{mainGL}
The following statements hold.
\begin{itemize}
\item[(1)] For any given simplicial complexes $K_{1},\ldots,K_n$ there exists a sequence of stellar subdivisions of their join $K_{1}\ast\ldots\ast K_n$ resulting in a simplicial complex $K$ such that there exists a non-trivial $n$-fold Massey product in $H^*(\zk)$;
\item[(2)] If a simplicial complex $K^{\prime}$ is obtained from a simplicial complex $K$ by a sequence of a special type edge truncations and there exists a non-trivial $n$-fold Massey product in $H^*(\mathcal Z_{K^{\prime}})$, then the same property holds in $H^*(\zk)$.  
\end{itemize}
\end{theorem}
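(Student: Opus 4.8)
The plan is to prove the two statements by different means: statement (1) by an explicit construction feeding into Theorem~\ref{mainlemma}, and statement (2) by a naturality argument for Massey products.

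For (1): begin with the join $L=K_{1}\ast\cdots\ast K_{n}$, so that $\mathcal Z_{L}\simeq\mathcal Z_{K_{1}}\times\cdots\times\mathcal Z_{K_{n}}$ and $H^{*}(\mathcal Z_{L})$ is a tensor product, hence formal; choosing nonzero $\alpha_{i}\in\tilde H^{d(i)}(K_{i})$ and viewing them via Theorem~\ref{zkcoh} as classes on pairwise disjoint vertex subsets $I_{i}\subset V(K_{i})$, the consecutive products $\alpha_{i}\cdot\alpha_{i+1}$ are the K\"unneth classes in $\tilde H^{*}((K_{i})_{I_{i}}\ast(K_{i+1})_{I_{i+1}})$ and are in general nonzero, so $\langle\alpha_{1},\dots,\alpha_{n}\rangle$ is not even defined yet. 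The idea is to perform a carefully chosen sequence of stellar subdivisions of $L$ along faces that ``bridge'' consecutive blocks $I_{i},I_{i+1}$, producing $K$ for which, after relabelling, every proper consecutive induced subcomplex $K_{I_{s}\sqcup\cdots\sqcup I_{r+s}}$ with $r<n-1$ becomes acyclic in the degree $d(s,r+s)$, while the full induced subcomplex $K_{I_{1}\sqcup\cdots\sqcup I_{n}}$ retains a nonzero class in degree $d(1,n)$. Here the key technical input is to describe precisely the effect of a single stellar subdivision at a face $\sigma$ on the induced subcomplexes $K_{J}$ with $J\supseteq\sigma$ (it attaches a fresh cone vertex, which either ``cones off'' and kills a moderate-size induced subcomplex or merely shifts the top class), and then to organise the subdivisions so that all proper consecutive cohomology vanishes. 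Once this is arranged, Theorem~\ref{mainlemma}(1) shows that $\langle\alpha_{1},\dots,\alpha_{n}\rangle$ is defined.

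The hard part of (1) is non-triviality: that $0\notin\langle\alpha_{1},\dots,\alpha_{n}\rangle$ in $H^{d(1,n)}(K_{I_{1}\sqcup\cdots\sqcup I_{n}})$. One works at the cochain level in the differential graded algebra $\bigoplus_{I\subset[m]}C^{*}(K_{I})\cong C^{*}(\mathcal Z_{K})$ of Theorem~\ref{mainlemma}: for an arbitrary defining system $C=(c_{ij})$ one must show that $a(C)=-\sum_{j}\bar c_{1,1+j}\wedge c_{1+j,n+1}$ is not a coboundary. The strategy is to fix explicit representatives $c_{i,i+1}$ of $\alpha_{i}$ and explicit fillings $c_{s,r+s+1}$ (which exist because the corresponding reduced cohomology vanishes) adapted to the combinatorics of the subdivided complex, then pin down a distinguished simplex of $K_{I_{1}\sqcup\cdots\sqcup I_{n}}$ on which $a(C)$ carries a forced nonzero coefficient, and check that the indeterminacy $\sum\alpha_{1}\cdots\alpha_{s}\cdot\tilde H^{*}\cdot\alpha_{r+s+1}\cdots$ — the ambiguity in the fillings, which changes $a(C)$ only by a coboundary — never reaches that class. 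This cochain-level bookkeeping is the main obstacle and is precisely where the analysis of Grbi\'c and Linton enters.

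For (2): a ``special edge truncation'' of $K$ is (dual to) a stellar subdivision, and the plan is to exhibit the map, or zig-zag of maps, of moment-angle complexes it induces, hence a map (or zig-zag) of the cochain algebras $C^{*}(\mathcal Z_{K})$ and $C^{*}(\mathcal Z_{K'})$. I would first check, using Theorem~\ref{zkcoh} and the behaviour of induced subcomplexes under the operation, that this map restricts to isomorphisms on the summands carrying the classes $\alpha_{j}$ and on the summand $\tilde H^{d(1,n)}$ carrying the Massey-product value, and is compatible with all products and fillings entering a defining system. Naturality of Massey products under maps of differential graded algebras then transfers a non-trivial $n$-fold Massey product in $H^{*}(\mathcal Z_{K'})$ to one in $H^{*}(\mathcal Z_{K})$; here one has only to be careful that the map carries the indeterminacy of the one product into that of the other and is injective in the relevant degree, so that $0$ cannot be hit. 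Iterating over the sequence of edge truncations finishes the proof, the sole delicate point being to identify exactly which map of moment-angle complexes a ``special edge truncation'' induces.
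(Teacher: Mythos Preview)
The paper does not contain a proof of this statement: Theorem~\ref{mainGL} is quoted from Grbi\'c and Linton~\cite{GL2} as an external result, and the only indication of method in the surrounding text is that ``their approach is based on a careful investigation, on the level of cochains, of cup and Massey products of the cohomology classes occurred in Theorem~\ref{mainlemma}'', together with the caveat that the resulting Massey products ``are no longer strictly defined, in general.'' There is therefore no proof in the paper against which to compare your proposal.

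That said, two remarks on your sketch. For part~(1), your plan is to arrange, via stellar subdivisions, that the hypothesis of Theorem~\ref{mainlemma}(1) holds, i.e.\ that all the proper consecutive groups $\tilde H^{d(s,r+s)}(K_{I_s\sqcup\cdots\sqcup I_{r+s}})$ vanish. You do not indicate which faces to subdivide or why the resulting induced subcomplexes become acyclic in the required degrees for \emph{arbitrary} input complexes $K_1,\ldots,K_n$; this is the entire combinatorial content of the construction and is only asserted in your outline. The paper's hint, together with its warning about non-strict definedness, suggests the original argument works by direct cochain computation rather than by forcing the vanishing hypotheses of Theorem~\ref{mainlemma}, so your route may diverge from~\cite{GL2} even if it can be completed. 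For part~(2), the naturality argument you propose rests entirely on producing a map (or zig-zag) of differential graded algebras between $C^*(\mathcal Z_K)$ and $C^*(\mathcal Z_{K'})$ associated to a ``special edge truncation'', and on checking injectivity in the relevant degree so that nontriviality transfers; you yourself flag the existence of this map as the unresolved point, and without it the argument is only a template.
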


\begin{remark}
The case of $n=3$ in Theorem~\ref{mainGL} (1) coincides with the construction due to Baskakov~\cite{BaskM}. 
\end{remark}

Using Theorem~\ref{mainGL} (2) and Theorem~\ref{tripleMassey} (1), Grbi\'c and Linton obtain a result due to Zhuravleva~\cite{Zh}, who proved that for any {\emph{Pogorelov polytope}} $P$ (see~\cite{Pog,And,BE2017,BEMPP}) there exists a non-trivial triple Massey product in $H^*(\zp)$. 

Moreover, using Theorem~\ref{mainGL} (2) and Theorem~\ref{mymainMassey}, Grbi\'c and Linton showed that for any $n\geq 2$ there exists a non-trivial Massey product in $H^*(\mathcal Z_{Pe^n})$ of any order $k$, $2\leq k\leq n$, where $Pe^n$ is an $n$-dimensional permutohedron. 

It was earlier proved by the first author~\cite[Lemma 4.9, Theorem 4.10]{L2019} that if $P$ is a {\emph{graph-associahedron}}, in particular, an $n$-dimensional permutohedron, see~\cite{CD}, then the following conditions are equivalent: 1) $\zp$ is rationally formal; 2) there exist no non-trivial strictly defined triple Massey products in $H^*(\zp)$; 3) $P$ is a product of segments, pentagons, and hexagons.

Finally, it should be mentioned that there exists a simple polytope $P$ such that there are no non-trivial triple Massey products of 3-dimensional classes in $H^*(\zp)$, but there exists a non-trivial strictly defined 4-fold Massey product in $H^*(\zp)$. This was proved by Barali\'c, Grbi\'c, Limonchenko, and Vu\v{c}i\'c~\cite{BGLV} for $P$ being the dodecahedron using Theorem~\ref{mainlemma}. 

\section{Acknowledgements}
The authors wish to thank Victor Buchstaber for many fruitful discussions, encouragement, and advice. The first author is also grateful to the Fields Institute for Research in Mathematical Sciences, University of Toronto (Canada) for providing excellent research conditions and support while working on this paper at the Thematic Program on Toric Topology and Polyhedral Products.

\end{document}